\theoremstyle{plain}
\newtheorem{theo}{Theorem}[section]
\newtheorem{prop}[theo]{Proposition}
\newtheorem{lemm}[theo]{Lemma}
\newtheorem{coro}[theo]{Corollary}
\newtheorem{defi}[theo]{Definition}
\theoremstyle{definition}
\newtheorem{rema}[theo]{Remark}
\DeclareMathOperator{\cnx}{div}
\DeclareMathOperator{\RE}{Re}
\DeclareMathOperator{\dist}{dist}
\DeclareMathOperator{\IM}{Im}
\DeclareMathOperator{\supp}{supp}
\def\defn{\mathrel{:=}}
\def\delt{\varrho}
\def\Deltayx{\Delta_{x,y}}
\def\eps{\varepsilon}
\def\la{\left\lvert}
\def\lA{\left\lVert}
\def\le{\leq}
\def\Ly{(h^\delta\partial_y)}
\def\mez{\frac{1}{2}}
\def\partialx{\nabla}
\def\partialyx{\nabla_{x,y}}
\def\ra{\right\rvert}
\def\rA{\right\rVert}
\def\s{\sigma}
\def\tdm{\frac{3}{2}}
\def\tq{\frac{3}{4}}
\def\xC{\mathbf{C}}
\def\xN{\mathbf{N}}
\def\xR{\mathbf{R}}
\def\xT{\mathbf{T}}
\numberwithin{equation}{section}
\title{Strichartz estimates for water waves}
\author{T. Alazard}\address{ CNRS \& Univ Paris-Sud 11\\ D\'epartement de Math\'ematiques\\ F-91405 Orsay}
\email{thomas.alazard@math.u-psud.fr}
\author{N. Burq}\address{Universit\'e Paris-Sud 11 \\ D\'epartement de Math\'ematiques; CNRS \\ F-91405 Orsay}
\email{nicolas.burq@math.u-psud.fr}
\author{C. Zuily}\address{ Universit\'e Paris-Sud 11 \\ D\'epartement de Math\'ematiques; CNRS  \\ F-91405 Orsay}
\email{claude.zuily@math.u-psud.fr}
\thanks{Support by the French Agence Nationale de la Recherche, project EDP Dispersives, r\'ef\'erence ANR-07-BLAN-0250, is acknowledged.}
\date{\today}
\begin{abstract}In this paper we investigate the dispersive properties of the solutions of the two dimensional water-waves system. First we prove Strichartz type estimates with loss of 
derivatives at the same low level of regularity we were able to construct the solutions 
in [2]. On the other hand, for smoother initial data, we prove that the solutions enjoy 
the optimal Strichartz estimates (i.e, without loss of regularity compared to the system 
linearized at ($\eta =0, \psi = 0$)). 
\end{abstract}
\begin{document}
\maketitle
 
\section{Introduction}



In a time-dependent  domain $\Omega_t \subset \xR^{d+1}$ which 
is located between a free hypersurface $\Sigma_t$ and a fixed known bottom $\Gamma$, consider a potential flow  $v=\nabla_{x,y}\phi$, with
\begin{equation*}
\Deltayx\phi=0 \quad\text{in }\Omega_t,\quad \partial_{n}\phi=0  \quad\text{on }\Gamma.
\end{equation*}
The surface-tension water-waves problem is given by two equations: 
a kinematic condition (which states that 
the free surface moves with the fluid), and a dynamic condition (that expresses a 
balance of forces across the free surface). 
The system reads 
\begin{equation}\label{WW}
\left\{
\begin{aligned}
&\partial_{t} \eta = \partial_{y}\phi -\partialx\eta\cdot\partialx \phi &&\text{on }\Sigma_t=\{y=\eta(t,x)\}, \\
&\partial_{t}\phi+\frac{1}{2}\la \partialyx\phi\ra^2  +g \eta =  H(\eta)
&&\text{on }\Sigma_t,
\end{aligned}
\right.
\end{equation}
where $\nabla = \nabla_x$, $g>0$ is the acceleration of gravity and 
$$
H(\eta)= \text{div}\left( \frac{\nabla\eta}{\sqrt{1+(\partial_x \eta)^2}}\right).
$$
is the mean curvature of the free surface.

\subsection{Assumptions}
We work in a fluid domain such that there is uniformly a minimum depth of water, more precisely we assume that for each time $t$ one has
$$
\Omega_t=\Omega_{1,t}\cap \Omega_2
$$
where $\Omega_{1,t}$ is the half space located below the free surface $\Sigma_t$,
$$
\Omega_{1,t} = \{\,(x,y)\in \xR^d\times\xR  \,:\,   y<\eta (t,x)\,\} \qquad (d\ge 1)
$$
for some unknown function $\eta$ and $\Omega_{2}$ contains a fixed strip around $\Sigma_t$, that means that there exists $h>0$ such that, 
\begin{equation}\label{Ht} 
\{ (x,y)\in \xR^d\times \xR\, : \, \eta(t,x) -h \le y \leq \eta(t,x) \} \subset \Omega_2,
\end{equation}
for all $t\in [0,T]$. 
We shall also assume that the domain $\Omega_2$ (and hence the domain $\Omega_t=\Omega_{1,t}\cap \Omega_2$) 
is connected.

We emphasize that no regularity assumption is made on the bottom $\Gamma=\partial\Omega_t\setminus \Sigma_t$. 
We consider both cases of infinite depth and bounded depth bottoms (and all cases in-between). 
Finally, we could consider the cases where the free surface is a graph over a given smooth hypersurface and the bottom is time dependent.

\subsection{Main results}

 Following Zakharov we reduce the system to a system on the free surface. 
If $\psi=\psi(t,x) \in\xR$ is defined by 
$$
\psi(t,x)=\phi(t,x,\eta(t,x)),
$$
then $\phi(t,x,y)$ is the unique variational solution of 
\begin{equation}\label{dphi}
\Delta \phi = 0 \quad\text{in } \Omega_t, \qquad \phi(t,x, \eta (t,x) ) = \psi (t,x).
\end{equation}
The Dirichlet-Neumann operator is then defined by 
\begin{align*}
(G(\eta) \psi)  (t,x)&=
\sqrt{1+|\partialx\eta|^2}\,
\partial _n \phi\arrowvert_{y=\eta(t,x)}=\partial_y \phi-\partialx \eta \cdot \partialx \phi \Big\arrowvert _{y=\eta(t,x)}.
\end{align*}
(we refer to Section~2 in \cite {ABZ}
for a precise construction). 

Then $(\eta,\phi)$ is solution of the water-waves system~\eqref{WW} if and only if $(\eta,\psi)$ solves the system
\begin{equation}\label{system}
\left\{
\begin{aligned}
&\partial_{t}\eta-G(\eta)\psi=0,\\
&\partial_{t}\psi+g \eta- H(\eta)
+ \frac{1}{2}\la\partial_x \psi\ra^2  -\frac{1}{2}
\frac{\bigl(\partial_x  \eta\cdot\partial_x \psi +G(\eta) \psi \bigr)^2}{1+|\partial_x  \eta|^2}
= 0.
\end{aligned}
\right.
\end{equation}
Concerning the Cauchy theory, there are many results starting from the pionneering work of K.~Beyer and M.~G{\"u}nther \cite{BG}. See 
S.Wu~\cite{Wu0}, D.~M. Ambrose and N.~Masmoudi \cite{AmMa}, B.~Schweiser \cite{Sch}, 
T.~Iguchi \cite{Iguchi}, D.~Coutand and S.~Shkoller \cite{CS}, J. Shatah and C. Zeng \cite{SZ}, 
M.~Ming and Z.~Zhang \cite{MZ}, F. Rousset and N. Tzvetkov \cite{RT}. 
In~\cite{ABZ}, we established new local well posedness results for the system~\eqref{system} under sharp (as long as no dispersive effects are taken into account) regularity assumptions on the initial data. We refer 
to the introduction of~\cite{ABZ} for references and a short historical survey of the 
background of these problems. 

The purpose of this work is precisely, in the case $d = 1$, to investigate the dispersive properties of these solutions.  Our results are twofold: first we prove  Strichartz type estimates with loss of derivatives at the very same level of regularity we were able to construct the solutions in~\cite{ABZ}. On the other hand, for smoother initial data,  we  prove that the solutions enjoy the optimal Strichartz estimates (i.e, without loss of regularity compared to the system linearized at $(\eta=0, \psi=0)$). More precisely, our main results are the following.
\begin{theo}\label{theo:main}
Let $s>5/2$ and $T>0$. Consider a   solution 
$(\eta,\psi)$ of ~\eqref{system} on the time interval $I = [0,T]$ 
such that $\Omega_t$ satisfies~\eqref{Ht}   for $t\in I$. If 
$$
(\eta,\psi)\in C^0\big(I, H^{s+\mez}(\xR)\times H^{s}(\xR)\big),
$$
then 
$$
(\eta,\psi)\in L^4\big(I, W^{s+\frac{1}{4},\infty}(\xR)\times W^{s-\frac{1}{4},\infty}(\xR)\big).
$$
\end{theo}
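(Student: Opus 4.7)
The plan is to start from the paradifferential reduction of the water-waves system established in \cite{ABZ} and then prove Strichartz estimates by a short-time semiclassical parametrix on each Littlewood--Paley frequency block. At the regularity $\eta\in H^{s+1/2}$, $\psi\in H^s$ with $s>5/2$, the Taylor coefficient $a(t,x)$ (strictly positive) and the horizontal velocity $V=\partial_x\psi-B\,\partial_x\eta$ are well defined, and the system reduces to a single paradifferential equation on a complex ``good unknown'' $u\sim\L{D_x}^{1/2}\eta+i(\psi-T_B\eta)$ of the schematic form
\begin{equation*}
\partial_t u+T_V\partial_x u+iT_\gamma u=f,\qquad f\in L^\infty_t(H^s),
\end{equation*}
where $\gamma(t,x,\xi)$ is a real symbol of order $3/2$ whose principal part is comparable to $\sqrt{a|\xi|+|\xi|^3}$. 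Straightening the characteristics of $\partial_t+T_V\partial_x$ by a paracomposition eliminates the transport term and reduces matters to a purely dispersive paradifferential equation $\partial_t v+iT_\gamma v=g$.

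\textbf{Short-time parametrix and dyadic summation.} After a Littlewood--Paley decomposition $v=\sum_N v_N$, the goal at frequency $N$ is the bound $\|v_N\|_{L^4([0,T];L^\infty_x)}\lesssim N^{1/4}\|v_N\|_{L^\infty_t L^2_x}$, which, once summed by $\ell^2\hookrightarrow\ell^4$ and recast through the $\L{D_x}^{1/2}$ correspondence between $u$ and $(\eta,\psi)$, yields the theorem. Since $\partial_\xi^2\gamma\sim N^{-1/2}$ at frequency $N$, the natural dispersive time scale is $\tau_N=N^{-1/2}$. I would partition $[0,T]$ into $\sim TN^{1/2}$ subintervals $I_j^N$ of length $\tau_N$. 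On each such interval one constructs a WKB parametrix for the propagator of $iT_\gamma$ (the Hamilton rays travel a distance of order one in time $\tau_N$), obtaining a pointwise decay of the Schwartz kernel of size $(|t|N^{-1/2})^{-1/2}$; the Keel--Tao $TT^*$ argument then gives the free-type bound
\begin{equation*}
\|v_N\|_{L^4(I_j^N;L^\infty)}\lesssim N^{1/8}\,\|v_N\|_{L^\infty(I_j^N;L^2)},
\end{equation*}
the exponent $1/8$ being dictated by the Strichartz scaling $\tfrac{3}{2p}+\tfrac{1}{q}=\tfrac{1}{2}$ at $(p,q)=(4,\infty)$ for an equation with symbol of order $3/2$ in one dimension. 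Summing in $\ell^4$ over the $\sim TN^{1/2}$ subintervals costs a factor $N^{1/8}$ and produces the claimed $N^{1/4}$ gain.

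\textbf{Main obstacle.} The delicate step is the WKB parametrix at the threshold regularity: for $s>5/2$ the symbol $\gamma(t,\cdot,\xi)$ has only $H^{s-1}\subset C^{3/2+}$ regularity in $x$, so a naive Hamilton--Jacobi construction (which typically requires $C^2$ coefficients) is out of reach. The remedy is to mollify the coefficients of $\gamma$ at a semiclassical scale $N^{-\alpha}$ with a carefully chosen $\alpha\in(0,1)$, run the WKB for the smoothed symbol on $I_j^N$, and absorb the mollification and parametrix errors by paradifferential symbolic calculus and energy estimates of the kind used in \cite{ABZ}. Balancing the mollification error against the parametrix error over $\tau_N$ is precisely what pins down $\tau_N=N^{-1/2}$, and what produces the loss of $1/8$ of a derivative relative to the flat-state Strichartz bound $W^{s+3/8,\infty}\times W^{s-1/8,\infty}$ that would hold for the system linearized at $(\eta=0,\psi=0)$.
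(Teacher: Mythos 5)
Your proposal shares the paper's skeleton — paradifferential reduction from \cite{ABZ}, a 1-D change of variables by paracomposition, mollification of the rough coefficient at scale $h^{\delta}$ (Smith / Bahouri--Chemin), a WKB parametrix on semiclassical time intervals of length $\sim h^{1/2}$, dispersion via stationary phase, $TT^*$, and dyadic summation — and the exponent bookkeeping ($N^{1/8}$ on each interval, $\sim N^{1/2}$ intervals, $\ell^4$ sum, total $N^{1/4}$) is correct. But there is a genuine conceptual error in the reduction step that would send you down the wrong road.

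You write that ``straightening the characteristics of $\partial_t+T_V\partial_x$ by a paracomposition eliminates the transport term and reduces matters to a purely dispersive equation $\partial_t v+iT_\gamma v=g$.'' That is not what the paracomposition achieves, and it is not what you want. In the paper, the diffeomorphism $\chi(t,\cdot)$ is chosen so that $\partial_x\chi=c^{-2/3}$, and Alinhac's operator $\kappa^*$ then turns the \emph{principal} part $|D_x|^{3/4}T_c|D_x|^{3/4}$ into the constant-coefficient operator $|D_x|^{3/2}$ (the order-$1/2$ remainder $T_a$ produced by the paracomposition is then killed by the further conjugation $u=T_{e^{ig}}\Phi^*$, which your sketch omits). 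The transport term is \emph{not} eliminated: after the change of variables the equation is $\partial_t u + T_W\partial_x u + i|D_x|^{3/2}u=F$ with a new, nonzero coefficient $W$, and it is precisely this residual $W$ (whose derivative only lies in $L^\infty_tH^{s-2}_x$) that must be mollified and that drives the linear eikonal equation $\partial_\sigma\psi+a'(\xi)\partial_x\psi=-\xi V_h$ and the transport equations for the amplitude. Straightening instead along the flow of $T_V\partial_x$, as you describe, would kill the order-$1$ term but reintroduce $x$-dependence into the leading $3/2$-order symbol; you would then face a variable-coefficient eikonal equation and a variable-coefficient phase Hessian in the stationary-phase bound. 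The whole content of the 1-D reduction is to make the \emph{top-order} symbol constant coefficient (there being a single metric on $\xR$), at the price of a nontrivial lower-order transport; the van der Corput estimate of Proposition~4.5 is built exactly on the uniform lower bound $\partial_\xi^2\theta\gtrsim\sigma$ coming from the frozen $a''(\xi)\gtrsim1$.

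A secondary but necessary point: the parametrix intervals in the paper have length $h^{1/2-\varepsilon}$, not $h^{1/2}$, and the choice $\delta$ close to $1/2$ in the mollification is tuned so that the gain $\delta(s-\tfrac32)>\tfrac12$ beats the loss caused by the extra subintervals. This $\varepsilon$-room is what makes the sum over dyadic blocks converge and upgrades the blockwise bound $\|\Delta_j u\|_{L^4L^\infty}\lesssim 2^{-j(s-1/4)}c_j$ ($c_j\in\ell^2$) to a genuine $L^4W^{s-1/4,\infty}$ statement. Partitioning into exactly $N^{1/2}$ intervals, as you propose, leaves no slack for the final summation.
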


\begin{theo}\label{theo:classic}
Let $s>11/2$, $T>0$ and $p,q,\s$ be such that
$$
\frac 2 p + \frac 1 q =\frac 1 2 ,\quad 2\le q < +\infty.
$$
Consider a   solution 
$(\eta,\psi)$ of ~\eqref{system} on the time interval $I = [0,T]$ 
such that $\Omega_t$ satisfies~\eqref{Ht}   for $t\in I$. If 
$$
(\eta,\psi)\in C^0\big(I, H^{s+\mez}(\xR)\times H^{s}(\xR)\big),
$$
then 
$$
(\eta,\psi)\in L^p\big(I, W^{s+\frac 3 8 +\frac 1 {4q},q}(\xR)\times W^{s- \frac 1 8 + \frac 1{4q},q}(\xR)\big).
$$
\end{theo}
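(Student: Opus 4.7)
\medskip

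\textbf{Proof proposal for Theorem \ref{theo:classic}.}

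\emph{Step 1: Paralinearization and reduction to a half--wave equation.} The starting point is the paralinearization of the water--waves system performed in~\cite{ABZ}. Under the assumption $s>5/2$, the unknown $(\eta,\psi)$ can be replaced by a good unknown $u$ which satisfies a paradifferential equation of the form
\begin{equation*}
\partial_{t} u + T_{V} \partial_{x} u + i T_{\gamma} u = f,
\end{equation*}
where $V$ is a real--valued function (the horizontal trace of the velocity), $\gamma(t,x,\xi)$ is a symbol of order $3/2$ whose principal part is a positive elliptic symbol of the form $\sqrt{a(t,x)\,\ell(t,x,\xi)}$ (with $a$ the Taylor coefficient and $\ell$ a symbol of order $3$ built from the Dirichlet--Neumann operator and the linearization of the mean curvature), and the source $f$ is controlled by $\CH{s}\times\CH{s-1/2}$--norms of $(\eta,\psi)$. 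Under the stronger assumption $s>11/2$, the coefficients $V$, $a$ and the sub--principal part of $\gamma$ belong to $C^{0}_t W^{k,\infty}_x$ for sufficiently large $k$, a feature which I will use crucially to construct a parametrix on time intervals that do not shrink with the frequency.

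\emph{Step 2: Dyadic localization and semiclassical parametrix.} Let $h=2^{-j}$ and frequency--localize $u$ at scale $|\xi|\simeq h^{-1}$, writing $u_h=\Delta_j u$. After the usual change of time variable to absorb the transport term $T_V\partial_x$, I rescale to the semiclassical variable and look for a solution of
\begin{equation*}
\partial_{t} u_{h} + i \, \Op_h(p) u_h = r_h, \qquad p(t,x,\xi)=h^{3/2}\gamma(t,x,\xi/h),
\end{equation*}
in the form of a WKB Fourier integral operator $u_h(t)=I(t,0)u_h(0)+\int_0^t I(t,s)r_h(s)\,ds$ with Schwartz kernel
\begin{equation*}
K_{I(t,s)}(x,y)=\frac{1}{(2\pi h)}\int e^{\frac{i}{h}(\varphi(t,s,x,\xi)-y\cdot\xi)} \, b(t,s,x,\xi)\,d\xi ,
\end{equation*}
where $\varphi$ solves the Hamilton--Jacobi equation $\partial_t\varphi+p(t,x,\partial_x\varphi)=0$ with $\varphi\arrowvert_{t=s}=x\cdot\xi$, and $b$ is obtained by solving transport equations. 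Because $p$ is elliptic of order $3/2$, its Hamiltonian flow is non--degenerate (bicharacteristic curves satisfy $\ddot{x}\neq 0$ along the flow of the principal part $\sqrt{a}|\xi|^{3/2}$), and the regularity of the coefficients guaranteed by $s>11/2$ is what allows the WKB construction to be carried out on a fixed (non--shrinking) time interval $[s,s+T_0]$ with uniform bounds in $h$.

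\emph{Step 3: Dispersive estimate and Strichartz via $TT^{*}$.} Stationary phase applied to the oscillatory integral $K_{I(t,s)}$, with phase whose Hessian in $\xi$ is of size $|t-s|\cdot h^{-1/2}$ (this being the second derivative of $\sqrt{a}|\xi|^{3/2}$ in the semiclassical variable), yields the fixed--time dispersive bound
\begin{equation*}
\big\| I(t,s) u_h(0)\big\|_{L^\infty_x}\lesssim h^{-1/2}\,|t-s|^{-1/2}\,\|u_h(0)\|_{L^1_x},
\end{equation*}
for $0<|t-s|\le T_0$. Interpolating with the trivial $L^2\to L^2$ bound and running the abstract $TT^{*}$ argument of Keel--Tao gives, for every admissible pair $(p,q)$ with $2/p+1/q=1/2$, $2\le q<\infty$,
\begin{equation*}
\big\|I(\cdot,0)u_h(0)\big\|_{L^p_t L^q_x}\lesssim h^{-(1/2)(1/2-1/q)}\,\|u_h(0)\|_{L^2_x}.
\end{equation*}
Undoing the semiclassical rescaling, this is exactly the gain (loss) of $\tfrac{3}{8}+\tfrac{1}{4q}$ derivatives announced in the theorem (this is the standard Strichartz loss for a dispersive equation of order $3/2$ in one space dimension). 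Littlewood--Paley square function in $q\ge 2$ converts the dyadic inequality into the claimed $L^p_t W^{\sigma,q}_x$ bound, and the corresponding estimate for the Duhamel term follows from the Christ--Kiselev lemma (here is where the restriction $q<\infty$ enters).

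\emph{Main obstacle.} The delicate point is Step 2: producing a parametrix on a time interval of length uniform in $h$, rather than on intervals of length $h^{\alpha}$ for some $\alpha>0$ (which is what would merely recover Theorem \ref{theo:main} with loss). This requires controlling the geometric optics construction (both the Hamilton--Jacobi equation for $\varphi$ and the transport equations for the amplitude $b$) past times at which the phase could develop caustics; this is the point at which the extra regularity $s>11/2$ is consumed, since it ensures that the coefficients entering the eikonal equation are in $W^{k,\infty}$ for $k$ large enough that the bicharacteristics are well--defined $C^{k}$ curves, and that error terms generated at each step of the amplitude iteration can be controlled uniformly in $h$ on $[0,T_0]$.
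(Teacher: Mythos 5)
Your skeleton (paralinearization, dyadic localization, semiclassical WKB parametrix on an $h$-independent time interval, stationary phase, $TT^{*}$) is the right outline, but it omits the specific technical devices that make the argument work, and one of the stated constants is off.

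\emph{Missing reductions.} You propose to carry out the WKB construction directly for a variable--coefficient symbol $\sqrt{a(t,x)\,\ell(t,x,\xi)}$ of order $3/2$. The paper does not do this: before constructing the parametrix it performs two reductions that are essential. First, Alinhac's paracomposition (a time--dependent change of the $x$--variable built from $\partial_x\chi=(1+(\partial_x\eta)^2)^{1/2}$) flattens the principal symbol to the \emph{constant--coefficient} operator $|D_x|^{3/2}$, so the only variable coefficient left is the transport term $T_W\partial_x$. Second, a Bahouri--Chemin/Smith--type symbol smoothing replaces $T_W$ by $T_W^\delta$ with $W_h^\delta=S_{[\delta(j-3)]}W$, $\delta=1/(s-3/2)$, trading regularity of $W$ for controlled $x$--derivatives of the mollified coefficient with errors absorbed by the extra $\mez$ derivative of the source. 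Without these two steps the eikonal and transport equations do not close at the stated regularity.

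\emph{The genuine difficulty: quasilinear geometric optics.} The key novelty of Theorem~\ref{theo:classic} compared to Theorem~\ref{theo:main} is not, as you write, avoiding caustics. After rescaling $\sigma = t\,h^{-1/2}$, the fixed physical time $|t|\le\tau_0$ becomes the \emph{large} semiclassical time $|\sigma|\le\tau_0 h^{-1/2}$. Because the subprincipal transport term $T_W\partial_x$ is of order $1$, only half an order below the principal part (rather than a full order as for magnetic Schr\"odinger operators), it produces $O(1)$ phase oscillations over time $O(h^{-1/2})$ which cannot be pushed into the amplitude. The eikonal equation therefore becomes \emph{quasilinear}, see~\eqref{eikonal}: one must solve $\partial_\sigma\psi + h^{-1/2}\big(a(\xi+h^{1/2}\partial_x\psi)-a(\xi)\big)+h^{1/2}W_h^\delta\partial_x\psi=-\xi W_h^\delta$ by the method of characteristics and prove uniform bounds on the flow and its derivatives up to time $\tau_0 h^{-1/2}$ (Propositions~\ref{eiko},~\ref{eikS_0},~\ref{psixi}). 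Likewise the amplitude $b=\chi_1 e^{\widetilde\theta}$ has $\xi$--derivatives that blow up like negative powers of $h$, which forces one to absorb $e^{\widetilde\theta}$ into a \emph{complex} phase and use a nonstandard stationary--phase inequality (Lemma~\ref{phasestatio}) rather than a textbook one. These are precisely the places where $s>11/2$ is consumed (via Lemma~\ref{defW}, to get $\partial_t W,\partial_x^2 W,\partial_t\partial_x W\in L^\infty$).

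\emph{Numerical inconsistency.} Your stationary--phase sketch in Step 3 is internally inconsistent: if the $\xi$--Hessian of the phase has size $|t-s|h^{-1/2}$, stationary phase in $\xi$ yields $(2\pi h)^{-1}\cdot (h/(|t-s|h^{-1/2}))^{1/2} = h^{-1/4}|t-s|^{-1/2}$, not $h^{-1/2}|t-s|^{-1/2}$. The paper's bound~\eqref{eq.dispersion} is indeed $h^{-1/4}|t-t_0|^{-1/2}$. With the wrong constant $h^{-1/2}$, the $TT^{*}$ argument gives a loss $h^{-(1/4)(1-2/q)}=h^{-1/4+1/(2q)}$, which would put $\psi$ in $W^{s-1/4+1/(2q),q}$ and not in $W^{s-1/8+1/(4q),q}$ as claimed. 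The correct constant $h^{-1/4}$ gives the loss $h^{-1/8+1/(4q)}$ matching the theorem. Finally, the paper derives the $(p,q)$--range from the $L^4_t B^{s-1/8}_{\infty,2}$ endpoint together with the $L^\infty_t H^s$ bound by complex interpolation (Section~\ref{section.6}), rather than by Keel--Tao and Christ--Kiselev; the restriction $q<\infty$ enters through that interpolation.
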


\begin{rema}\ 
$(i)$ Theorem~\ref{theo:main} was obtained recently under the assumption $s\geq 15$ by Christianson-Hur-Staffilani~\cite{CHS} .

$(ii)$ Let $s>5/2$ and $(\eta_0, \psi_0) \in H^{s+ \mez}( \xR) \times H^s( \xR)$  satisfying $\dist (\Sigma_0,\Gamma)\ge c>0$, we proved in~\cite{ABZ} that there exist $T>0$ and a solution  $(\eta,\psi) \in C^0\big([0,T];H^{s+\mez}(\xR)\times H^{s}(\xR)\big)$ satisfying $\dist (\Sigma_t,\Gamma)\ge c>0$.

$(iii)$ Letting $q$ tend to infinity  we see that the result in Theorem~\ref{theo:classic} exhibits a gain of $1/8$ derivatives with respect to Theorem~\ref{theo:main}

$(iv)$ For the end point $(p,q)= (4,+\infty)$ we prove in fact, under the assumptions in Theorem~\ref{theo:classic}, that
$$
(\eta,\psi)\in L^4\big(I,B^{s+\frac 3 8}_{\infty,2}(\xR)\times B^{s- \frac 1 8}_{\infty,2}(\xR)\big).
$$
where $B^\s_{\infty,2}$ is the standard Besov space (see Section~\ref{section.6})

$(v)$ The gain of regularity exhibited in Theorem~\ref{theo:classic} is optimal as can be seen at the level of the linearized system around the trivial solution $(\eta,\psi)=(0,0)$ which reads (when $g=0$), 
$$
\partial_t\eta -\la D_x\ra \psi=0, \quad \partial_t \psi  -\Delta \eta=0.
$$
Indeed $u=\la D_x\ra^{\mez}\eta +i \psi$ is a solution of the equation 
$i\partial_t u- \la D_x\ra^\tdm u =0$, for which one can prove the optimal estimate
$$
\lA \exp(-it \la D_x\ra^\tdm) u_0\rA_{ L^4\left(I,W^{s-\frac{1}{8},\infty}(\xR)\right)}\leq C \|u_0\|_{H^{s}},
$$
which gives the desired regularity on $(\eta,\psi)$.

$(vi)$ It is most likely that Theorem~\ref{theo:main} remains valid when $\xR$ is replaced by the one dimensional torus $\xT$. Indeed, our proof relies on a semi-classical parametrix (on time intervals taylored to the frequency) which exhibits finite speed of propagation and which can consequently be easily localized in space.

$(vii)$ Notice that the dispersive estimates proved in this paper can be combined with our previous work to improve the regularity threshold obtained in~\cite{ABZ} and give local well posednesss for initial data below the $s= 2 + \frac 1 2 $ threshold. This will be the matter of a forthcoming paper (including the $3$-d water-waves system)~\cite{ABZ3}.

$(viii)$ Notice finally that dispersive properties of the operator linearized at $(\eta=0, \psi=0)$ were used recently by Wu~\cite{Wu1, Wu2} and Germain-Masmoudi-Shatah~\cite{GMS} to prove global existence results.
\end{rema}

\subsection{Strategy of the proofs}
Following the approach in Alazard-M\'etivier~\cite{AM}, after suitable paralinearizations, we have shown in 
~\cite{ABZ} that the water waves system can be arranged into an explicit paradifferential symmetric equation of Schr\"odinger type, and we deduced  the smoothing effect for the  2-d surface tension water waves. Here, we will also take benefit of this paralinearization reduction, and this reduced system will be our starting point.  
The guiding line for the rest of our proof is  very classical: construction of a parametrix to prove dispersion ($L^1- L^\infty$ estimates), and then $TT^*$ argument. 

There are two  main difficulties in the analysis of this equation. First the coefficients of the operator are time dependent and consequently we cannot get rid of the lower order terms by simple conjugation arguments (see  Burq-Planchon \cite{BuPl}).  Second the coefficients enjoy poor regularity, and  finally, whereas the principal part in the operator is of order $3/2$, the subprincipal part in the operator is of order $1$ which gives only a $1/2$ difference compared to the usual $1$ difference  encountered for magnetic Schr\"odinger operators. As will be shown in our analysis, the presence of such subprincipal parts will produce non trivial oscillations which here have to be taken into account in the analysis. 

The first common step for  both theorems is to perform several reductions for the paradifferential equation. The first one  is  to use Alinhac's para-composition theory~\cite{Alipara} (see also Burq-Planchon~\cite{BuPl} where a similar idea was used) to reduce the matters to the study of a Schr\"odinger type operator with constant coefficients principal part.  This is  particular to space dimension $1$ and reflects the fact that there is only one metric on $\xR$. The second reduction, inspired by works by Smith~\cite{Sm} and Bahouri-Chemin~\cite{BaCh}, consists in smoothing out the coefficients of the operator.

Once this reduction has been achieved, we can construct the parametrix, for which the natural time is the semi-classical one: $s= t|\xi|^{-1/2}$. Here the differences between our two theorems appear. Indeed, in the proof of Theorem~\ref{theo:main}, following the strategy in Burq-G\'erard-Tzvetkov~\cite{BGT1}, we construct the parametrix on small times $|s| \leq c$) and the main difficulty is to handle sharp regularity threshold (for smooth enough initial data the proof would be much simpler). In the proof of Theorem~\ref{theo:classic} the difficulties are different: first we have to handle the  oscillations generated by the subprincipal part and furthermore  we have to prove very large time asymptotics ($|s| \leq c |\xi|^{1/2} $) in the high frequency regime $|\xi| \rightarrow + \infty$. Notice that, even for  initial  data with arbitrarily large smoothness, the analysis would be non trivial. Finally, once the parametrix is constructed, the dispersion estimate is obtained by using  non classical stationary phase lemmas  involving precise controls on the remainder terms.

\section{Preliminaries}
In this section we recall some notations and results from \cite{ABZ} which will be used in the sequel. 
\subsection{Paradifferential calculus}In this paragraph we 
review  classical facts about Bony's paradifferential calculus. 

For $\rho\in\xN$, according to the usual definition, we denote 
by $W^{\rho,\infty}(\xR)$ the Sobolev spaces of $L^\infty$ functions 
whose derivatives of order $\le\rho$ are in $L^\infty$. 
For $\rho\in ]0,+\infty[\setminus \xN$, we denote 
by $W^{\rho,\infty}(\xR)$ the 
space of bounded functions whose derivatives of order $[\rho]$  are uniformly H\"older continuous with 
exponent $\rho- [\rho]$. 

\begin{defi}
Given $\rho\ge 0$ and $m\in\xR$, $\Gamma_{\rho}^{m}(\xR)$ denotes the space of
functions $a(x,\xi)$ on $\xR\times(\xR\setminus 0)$,
which are $C^\infty$ with respect to $\xi$ and
such that, for all $\alpha\in\xN $ and all $\xi\neq 0$, the function
$x\mapsto \partial_\xi^\alpha a(x,\xi)$ belongs to $W^{\rho,\infty}(\xR)$ and there exists a constant
$C_\alpha$ such that,
\begin{equation}\label{para:10}
\forall\la \xi\ra\ge \mez,\quad \lA \partial_\xi^\alpha a(\cdot,\xi)\rA_{W^{\rho,\infty}(\xR)}\le C_\alpha
(1+\la\xi\ra)^{m-\la\alpha\ra}.
\end{equation}
\end{defi}

\begin{defi}
$\Sigma_{\rho}^{m}(\xR)$ denotes the space of 
symbols 
$a(x,\xi)$ such that 
$$
a=\sum_{0\le j <\rho}a^{(m-j)} \qquad (j\in \xN),
$$
where $a^{(m-j)}\in \Gamma^{m-j}_{\rho-j}(\xR)$ is homogeneous of degree $m-j$  
with respect to $\xi$. 
\end{defi}

Given a symbol $a$, we define
the paradifferential operator $T_a$ by
\begin{equation}\label{eq.para}
\widehat{T_a u}(\xi)=(2\pi)^{-d}\int \chi(\xi-\eta,\eta)\widehat{a}(\xi-\eta,\eta)\psi(\eta)\widehat{u}(\eta)
\, d\eta,
\end{equation}
where
$\widehat{a}(\theta,\xi)=\int e^{-ix\cdot\theta}a(x,\xi)\, dx$
is the Fourier transform of $a$ with respect to the first variable,  
$\chi$, $\psi$ are two fixed $C^\infty$ functions such that
$$
\psi(\eta)=0\quad \text{for } \la\eta\ra\le 1,\qquad
\psi(\eta)=1\quad \text{for }\la\eta\ra\geq 2,
$$
 $\chi(\theta,\eta)$ is 
homogeneous of degree $0$ and satisfies, for $0<\eps_1<\eps_2$ small enough,
$$
\chi(\theta,\eta)=1 \quad \text{if}\quad \la\theta\ra\le \eps_1\la \eta\ra,\qquad
\chi(\theta,\eta)=0 \quad \text{if}\quad \la\theta\ra\geq \eps_2\la\eta\ra.
$$ 

\smallbreak

We shall use quantitative results from M\'etivier \cite{MePise} about operator norms estimates in symbolic calculus. 
To do so we introduce the following semi-norms.
\begin{defi}
For $m\in\xR$, $\rho\ge 0$ and $a\in \Gamma^m_{\rho}(\xR)$, we set
\begin{equation}\label{defi:norms}
M_{\rho}^{m}(a)= 
\sup_{\la\alpha\ra\le \frac{d}{2}+1+\rho ~}\sup_{\la\xi\ra \ge 1/2~}
\lA (1+\la\xi\ra)^{\la\alpha\ra-m}\partial_\xi^\alpha a(\cdot,\xi)\rA_{W^{\rho,\infty}(\xR)}.
\end{equation}
\end{defi}

The main features of symbolic calculus for paradifferential operators are given by the following theorems.
\begin{defi}\label{defi:order}
Let $m\in\xR$.
An operator $T$ is said of order $\le m$ if, for all $\mu\in\xR$,
it is bounded from $H^{\mu}(\xR)$ to $H^{\mu-m}(\xR)$.
\end{defi}
\begin{theo}\label{theo:sc0}
Let $m\in\xR$. If $a \in \Gamma^m_0(\xR)$, then $T_a$ is of order $\le m$. 
Moreover, for all $\mu\in\xR$ there exists a constant $K$ such that
\begin{equation}\label{esti:quant1}
\lA T_a \rA_{H^{\mu}\rightarrow H^{\mu-m}}\le K M_{0}^{m}(a).
\end{equation}
\end{theo}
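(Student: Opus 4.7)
The plan is to reduce the $H^\mu \to H^{\mu-m}$ bound to an almost-orthogonal sum of dyadic $L^2$ estimates, exploiting crucially that the cutoff $\chi(\xi-\eta,\eta)$ in~\eqref{eq.para} is supported in $\la \xi-\eta\ra\leq \eps_2\la\eta\ra$, so that $T_a$ preserves spectral localization at each dyadic scale. Fix a Littlewood--Paley partition $1 = \sum_{j\geq -1}\varphi_j(\xi)$ with $\supp \varphi_j \subset \{\la\xi\ra\sim 2^j\}$ for $j \geq 0$, and set $\Delta_j = \varphi_j(D)$. The spectral localization property implies that the Fourier transform of $T_a \Delta_j u$ is supported in $\{\la\xi\ra\sim 2^j\}$, and by the Littlewood--Paley characterization $\lA v\rA_{H^s}^2 \sim \sum_j 2^{2js}\lA \Delta_j v\rA_{L^2}^2$, the theorem reduces to a single dyadic bound
\begin{equation*}
\lA T_a \Delta_j u\rA_{L^2}\leq K\, M_0^m(a)\, 2^{jm}\lA \Delta_j u\rA_{L^2},
\end{equation*}
with $K$ independent of $j$.

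To establish this estimate, I would write $T_a \Delta_j u(x) = \int K_j(x,y) (\Delta_j u)(y)\, dy$ with oscillatory kernel
\begin{equation*}
K_j(x,y) = \frac{1}{2\pi}\int e^{i(x-y)\xi}\, a_j(x,\xi)\,\psi(\xi)\,\varphi_j(\xi)\, d\xi,
\end{equation*}
where $a_j(x,\xi) = \mathcal{F}_\theta^{-1}\bigl[\chi(\theta,\xi)\widehat{a}(\theta,\xi)\bigr](x)$ is the symbol mollified in $x$ at frequency scale $\la\xi\ra$. Since $\chi(\cdot,\xi)$ is supported in $\{\la\theta\ra\lesssim \la\xi\ra\}$, each $x$-derivative of $a_j$ costs only a factor of $\la\xi\ra$, and combined with~\eqref{para:10} one gets, for $\la\xi\ra \ge 1/2$,
\begin{equation*}
\la \partial_x^\alpha\partial_\xi^\beta a_j(x,\xi)\ra \leq C_{\alpha,\beta}\, M_0^m(a)\, \la\xi\ra^{m-\la\beta\ra+\la\alpha\ra}.
\end{equation*}
Integrating by parts in $\xi$ using the oscillation $e^{i(x-y)\xi}$ then yields $\la K_j(x,y)\ra \leq C_N\, M_0^m(a)\, 2^{j(m+d)}(1+2^j\la x-y\ra)^{-N}$ for every $N$, from which the dyadic bound follows by Schur's lemma.

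The main obstacle is purely quantitative: one must ensure the constant $K$ depends on $a$ only through $M_0^m(a)$, which by~\eqref{defi:norms} controls \emph{no} regularity in $x$ but allows $\xi$-derivatives up to order $d/2 + 1$. This number is precisely what is required to make the integrations by parts above yield an integrable decay in $x-y$ uniformly in $j$; the $x$-derivatives produced by the mollification through $\chi$ are cost-free in this calculation, absorbed through factors of $\la\xi\ra$ that combine harmlessly with the $\la\xi\ra^m$ size of the symbol. This low-$x$-regularity feature is exactly what makes paradifferential calculus the right tool in~\cite{ABZ} for handling the rough symbols that arise from the water-waves system.
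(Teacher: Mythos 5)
This theorem is one the paper does \emph{not} prove: it is recalled from M\'etivier~\cite{MePise}, as the preamble to Definition 2.3 makes explicit. So there is no proof in the paper to compare against; what follows is an evaluation of your argument on its own terms.

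Your high-level strategy (Littlewood--Paley decomposition, spectral localization of $T_a\Delta_j u$ coming from the support of $\chi$, reduction to a uniform dyadic $L^2$ bound via almost orthogonality) is the standard and correct one, and you are right that the $x$-mollification built into the paraproduct trades $x$-derivatives of $a_j$ for harmless powers of $\la\xi\ra$. However, there is a genuine gap in the kernel estimate. You claim $\la K_j(x,y)\ra \le C_N M_0^m(a)\, 2^{j(m+d)}(1+2^j\la x-y\ra)^{-N}$ \emph{for every} $N$ by repeated integration by parts in $\xi$. Each integration by parts consumes one $\xi$-derivative of $a_j$, hence (in the worst term) one $\xi$-derivative of $a$ itself, and the seminorm~\eqref{defi:norms} controls these only up to order $\lfloor d/2\rfloor + 1$. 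So the decay exponent you can actually reach is $N = \lfloor d/2\rfloor + 1$, not arbitrary. But Schur's lemma needs integrable decay, i.e. $N > d$, and $\lfloor d/2\rfloor + 1 > d$ fails already in $d = 1$ (where only $N=1$ is available and $(1+2^j\la x-y\ra)^{-1}$ is not integrable on $\mathbf{R}$). Thus your own final paragraph, which asserts that $d/2+1$ derivatives are "precisely what is required to make the integrations by parts above yield an integrable decay," contradicts the display two paragraphs earlier, and the argument as written does not close.

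The classical repair is not more integrations by parts but Cauchy--Schwarz: with $\lfloor d/2\rfloor+1$ derivatives one gets weighted $L^2$ bounds $\lA (1+2^j\la\cdot\ra)^{\lfloor d/2\rfloor+1} K_j(x,x-\cdot)\rA_{L^2} \lesssim M_0^m(a)\, 2^{j(m+d/2)}$, and then
$$
\int \la K_j(x,y)\ra\,dy \le \Bigl(\int (1+2^j\la y-x\ra)^{-2(\lfloor d/2\rfloor+1)}dy\Bigr)^{1/2}\Bigl(\int (1+2^j\la y-x\ra)^{2(\lfloor d/2\rfloor+1)}\la K_j(x,y)\ra^2 dy\Bigr)^{1/2}\lesssim M_0^m(a)\,2^{jm},
$$
the first factor being finite precisely because $2(\lfloor d/2\rfloor+1) > d$. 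Equivalently, one invokes the embedding $H^s(\mathbf{R}^d)\hookrightarrow \mathcal{F}L^1$ for $s > d/2$ applied to $\xi\mapsto a_j(x,\xi)\varphi_j(\xi)\psi(\xi)$ after rescaling. This is exactly why M\'etivier's seminorm stops at $\lfloor d/2\rfloor + 1$ rather than at $d+1$: the $L^2$-based argument halves the number of derivatives needed compared with the pointwise-decay-plus-Schur argument you attempt. The rest of your outline (spectral localization, summability over $j$, the observation that $x$-regularity of $a$ plays no role) is sound once this repair is in place.
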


\begin{theo}[Composition]\label{theo:sc}
Let $m\in\xR$ and $\rho>0$. 
If $a\in \Gamma^{m}_{\rho}(\xR)$ and $ b\in \Gamma^{m'}_{\rho}(\xR)$ then 
$T_a T_b -T_{a\# b}$ is of order $\le m+m'-\rho$, where
$$
a\# b=
\sum_{\la \alpha\ra < \rho} \frac{1}{i^{\la\alpha\ra} \alpha !} \partial_\xi^{\alpha} a \partial_{x}^\alpha b.
$$
Moreover, for all $\mu\in\xR$ there exists a constant $K$ such that
\begin{equation}\label{esti:quant2}
\lA T_a T_b  - T_{a\# b}   \rA_{H^{\mu}\rightarrow H^{\mu-m-m'+\rho}}\le 
K M_{\rho}^{m}(a)M_{\rho}^{m'}(b).
\end{equation}
\end{theo}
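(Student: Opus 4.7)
The plan is to follow M\'etivier's quantitative approach~\cite{MePise} to paradifferential composition. The key structural fact to exploit is spectral localization: by the cut-off $\chi(\xi-\eta,\eta)$ in~\eqref{eq.para}, if $\widehat{u}$ is supported in a dyadic shell $\{|\eta|\sim 2^k\}$, then $T_b u$, $T_a T_b u$, and $T_{a\#b} u$ are all spectrally localized in an enlarged annulus of comparable size~$2^k$. By Littlewood--Paley characterization of Sobolev spaces, establishing the bound~\eqref{esti:quant2} from $H^\mu$ to $H^{\mu-m-m'+\rho}$ therefore reduces to proving the blockwise estimate
$$
\lA (T_a T_b - T_{a\#b})u \rA_{L^2} \le K\, M_\rho^m(a)\, M_\rho^{m'}(b)\, 2^{k(m+m'-\rho)}\, \lA u \rA_{L^2}
$$
uniformly in~$k$ for $u$ frequency-localized at scale~$2^k$, and then summing in~$\ell^2_k$.

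For a fixed block, I would write $\widehat{T_aT_bu}(\xi)$ as a double oscillatory integral in the frequency variables $\xi$ and $\eta$, and Taylor-expand $a(x,\xi)$ in the $\xi$-variable about the frequency $\eta$ of $u$ up to order $[\rho]$:
$$
a(x,\xi) = \sum_{|\alpha| < \rho} \frac{(\xi-\eta)^\alpha}{\alpha!}\, \partial_\xi^\alpha a(x,\eta) + R_\rho(x,\xi,\eta).
$$
Each monomial $(\xi-\eta)^\alpha$ turns into a $D_x^\alpha$ acting on $b(x,\eta)\widehat{u}(\eta)$, reproducing exactly the symbol $a\#b = \sum_{|\alpha|<\rho} (i^{|\alpha|}\alpha!)^{-1} \partial_\xi^\alpha a \, \partial_x^\alpha b$. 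Thus $T_aT_b - T_{a\#b}$ equals the operator whose symbol is the Taylor remainder $R_\rho$, plus harmless terms arising from the mismatch between the cut-offs $\chi$ attached to $T_a$, $T_b$ and $T_{a\#b}$; these mismatch terms are spectrally separated, hence $-\infty$-regularizing, and are controlled directly via Theorem~\ref{theo:sc0}.

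The heart of the proof is the control of the contribution of~$R_\rho$. When $\rho \in \xN$, the integral form of Taylor's formula and definition~\eqref{defi:norms} yield $|R_\rho(x,\xi,\eta)| \lesssim |\xi-\eta|^\rho (1+|\eta|)^{m-\rho}\, M_\rho^m(a)$, and since $|\xi-\eta|\lesssim \eps_2 |\eta| \sim 2^k$ on the spectral support, each block gains exactly the factor $2^{-k\rho}$ relative to the naive order $m+m'$. For non-integer $\rho$, the Taylor expansion is replaced by a H\"older difference of $\partial_x^{[\rho]} b$ on the natural scale $|x-y|\lesssim 2^{-k}$ imposed by the kernel of~$T_b$, which again yields the $2^{-k\rho}$ gain through the $W^{\rho,\infty}$ part of $M_\rho^{m'}(b)$. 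Bernstein's inequality on the frequency-localized blocks transfers these $L^\infty$-bounds on coefficients into the desired $L^2$-operator bounds; the number $d/2+1+\rho$ of $\xi$-derivatives kept in~\eqref{defi:norms} is exactly what makes the Fourier integrals arising from the cut-offs absolutely convergent via Cauchy--Schwarz.

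The main obstacle, beyond careful bookkeeping of the semi-norms, is the non-integer case, where the usual Taylor remainder is unavailable and one must commute the H\"older difference estimate with the Littlewood--Paley projectors while preserving the quantitative dependence on $M_\rho^m(a)$ and $M_\rho^{m'}(b)$. Once the blockwise estimate is secured, summation in~$\ell^2_k$ against the Littlewood--Paley characterization of $H^{\mu-m-m'+\rho}$ produces~\eqref{esti:quant2}.
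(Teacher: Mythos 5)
The paper does not give a proof of Theorem~\ref{theo:sc}; it is stated as a cited result from M\'etivier's lecture notes~\cite{MePise}, and your proposal explicitly sets out to reproduce that argument, so what follows is an assessment of the sketch on its own terms. The overall structure is correct and is indeed the standard Bony--Meyer--M\'etivier composition argument: spectral localization of the paradifferential cut-offs reduces matters to uniform dyadic blockwise $L^2\to L^2$ bounds; Taylor expansion of $a$ in $\xi$ about the base frequency $\eta$ up to order $<\rho$ reproduces the terms $\frac{1}{i^{|\alpha|}\alpha!}\partial_\xi^\alpha a\,\partial_x^\alpha b$ of $a\# b$; the cut-off mismatches are spectrally separated and hence smoothing; and the remainder gains $\rho$ orders by combining the Taylor remainder of $a$ with the $W^{\rho,\infty}_x$ regularity of $b$. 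One imprecision you should fix: in the integer-$\rho$ paragraph you attribute the $2^{-k\rho}$ gain to the Taylor remainder bound $|R_\rho|\lesssim |\xi-\eta|^\rho(1+|\eta|)^{m-\rho}M_\rho^m(a)$ alone. But on the spectral support $|\xi-\eta|\lesssim 2^k\sim(1+|\eta|)$, so this pointwise bound only yields $|R_\rho|\lesssim 2^{km}$ and gives no gain on the symbol level. The gain of $\rho$ orders only appears once the factor $(\xi-\eta)^\rho$ is traded, inside the $\zeta$-integral, for $\rho$ derivatives (or $[\rho]$ derivatives plus a H\"older modulus) of $b$ in $x$, which is where the $W^{\rho,\infty}$ part of $M_\rho^{m'}(b)$ enters. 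You make this point correctly for non-integer $\rho$; it is equally essential in the integer case and should be stated there as well.
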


If $a=a(x)$ is a function of $x$ only, the paradifferential operator $T_a$ is a called a paraproduct. 
Paraproducts can also be defined using the Littlewood-Paley decomposition of the frequency space. 
Indeed, let $\phi\colon \xR\rightarrow \xR$ be a smooth even function with
$\phi(t)=1$ for $\la t\ra\le 1$ and $\phi(t)=0$ for $\la t\ra\geq 2$.
For $k\in \xN$, we introduce the symbol
$$
\phi_k(\xi)=\phi \Big(\frac { \xi}{2^{k}}\Big),
$$
and then the operators $S_k$ and $\Delta_k$ defined by
$$
\widehat{S_{k} f}(\xi)\defn \phi_{k}(\xi)\widehat{f}(\xi), \quad
\widehat{\Delta_{k} f}(\xi)\defn \left(\phi_{k}(\xi) -\phi_{k-1}(\xi)\right)\widehat{f}(\xi)
$$
For all   $f\in\mathcal{S}'(\xR)$, the spectrum of $\Delta_k f$ satisfies
${\rm spec}\, \Delta_k f \subset \{ \xi\,:\, 2^{k-1}  \leq \la\xi\ra \leq 2^{k+1}\}$.
Hence $\Delta_j \Delta_k = 0$ if $\vert j-k \vert \geq 2$. Moreover we have
the Littlewood--Paley decomposition:
$$
f=S_0 f +\sum_{k\in\xN^*}\Delta_k f.
$$
With this decompositon, paraproducts can be defined by 
$$
T_a f =\sum_{k\geq 4}{S}_{k-3}(a)\Delta_k f.
$$
  \\
Notice that the difference between paraproducts defined in these two ways is a smoothing operator. 
Namely, if $a\in W^{\rho,\infty}(\xR)$ for some $\rho >0$ then the difference is of order $-\rho$.  
\begin{theo}\label{ab-}
Let $\alpha, \beta \in \xR$ be such that $ \alpha +  \beta > 0.$ If $a \in H^\alpha(\xR)$ and $ b \in H^\beta(\xR)$ then $ab-T_{a}b - T_{b}a \in H^{\alpha + \beta - \frac{1}{2}}(\xR) $ and
$$\Vert ab - T_ab - T_ba \Vert_{H^{\alpha + \beta - \frac{1}{2}}(\xR)}  \leq K \Vert a \Vert_{H^\alpha(\xR)} \Vert b \Vert_{H^\beta(\xR)}$$
for some positive constant $K$ independent of $a,b.$
\end{theo}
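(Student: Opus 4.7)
The strategy is the standard Bony decomposition, with the \emph{resonant} remainder estimated by a duality argument that makes crucial use of the one-dimensional Bernstein inequality.

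First, using the Littlewood--Paley definition $T_a f = \sum_{k\ge 4}S_{k-3}(a)\Delta_k f$ recalled just above and decomposing $a = S_3 a + \sum_j \Delta_j a$, $b = S_3 b + \sum_k \Delta_k b$, a direct computation gives, up to finitely many low-frequency terms that lie trivially in every $H^\mu$,
$$
ab - T_a b - T_b a = R(a,b) \defn \sum_k u_k, \qquad u_k = \sum_{|j-k|\le 3}\Delta_j a\,\Delta_k b.
$$
The key feature is that each $u_k$ has Fourier transform supported in a \emph{ball} $\{\la\xi\ra\le C 2^k\}$ rather than an annulus, so the usual "spectrum-in-a-ball" summation lemma producing $H^\sigma$ norms is only available for $\sigma > 0$; here $\sigma \defn \alpha+\beta - \mez$ may well be non-positive. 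A cheap Bernstein bound in dimension one gives
$$
\|u_k\|_{L^2}\lesssim 2^{k/2}\|u_k\|_{L^1}\lesssim 2^{k/2}\|\Delta_k a\|_{L^2}\|\Delta_k b\|_{L^2} = 2^{-k\sigma}a_k b_k,
$$
where $a_k \defn 2^{k\alpha}\|\Delta_k a\|_{L^2}$, $b_k \defn 2^{k\beta}\|\Delta_k b\|_{L^2}$, so that $\|(a_k)\|_{\ell^2}\lesssim \|a\|_{H^\alpha}$ and similarly for $b$.

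To reach $H^\sigma$ I would then argue by duality. For $\varphi \in H^{-\sigma} = H^{\mez -\alpha -\beta}$, the ball-support property of $u_k$ gives $\langle u_k,\varphi\rangle = \langle u_k,S_{k+C}\varphi\rangle$, hence by Hölder
$$
|\langle u_k,\varphi\rangle|\le \|\Delta_k a\|_{L^2}\|\Delta_k b\|_{L^2}\|S_{k+C}\varphi\|_{L^\infty}.
$$
Applying the one-dimensional Bernstein inequality $\|\Delta_l\varphi\|_{L^\infty}\lesssim 2^{l/2}\|\Delta_l \varphi\|_{L^2}$ and summing over $l\le k+C$ with Cauchy--Schwarz on the $\ell^2$ sequence $2^{l(\mez-\alpha-\beta)}\|\Delta_l\varphi\|_{L^2}$, one obtains
$$
\|S_{k+C}\varphi\|_{L^\infty}\lesssim 2^{k(\alpha+\beta)}\|\varphi\|_{H^{\mez -\alpha-\beta}},
$$
where the strict positivity of $\alpha+\beta$ is exactly what allows the geometric series in $l$ to converge. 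Multiplying the three estimates, each term is dominated by $a_k b_k\|\varphi\|_{H^{\mez -\alpha-\beta}}$, and Cauchy--Schwarz in $k$ yields $\sum_k a_k b_k \le \|a\|_{H^\alpha}\|b\|_{H^\beta}$. Taking the supremum over $\|\varphi\|_{H^{\mez-\alpha-\beta}}\le 1$ gives the desired estimate.

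The main obstacle is precisely the possible non-positivity of $\sigma$, which forbids the naive ball-spectrum summation in $H^\sigma$ and forces the duality detour above; the hypothesis $\alpha+\beta > 0$ is used (and only used) to sum the low-frequency Littlewood--Paley blocks of the test function in $L^\infty$ via the one-dimensional Bernstein inequality. A final routine check is that the gap between the paradifferential operator $T_a$ defined by \eqref{eq.para} and the Littlewood--Paley paraproduct is a smoothing operator, as recalled in the paragraph preceding the statement, and is therefore harmless for the estimate claimed.
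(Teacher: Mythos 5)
The paper states Theorem~\ref{ab-} without proof (it is a classical remainder estimate in Bony's paraproduct decomposition, see e.g.\ Bony's original work or M\'etivier \cite{MePise}), so there is no internal proof to compare against. Your argument is the standard one and is correct in substance: split $ab$ into the two paraproducts and the resonant part $R(a,b)=\sum_k u_k$, note the ball-spectrum property of $u_k$, and handle the possibly non-positive target regularity $\sigma=\alpha+\beta-\tfrac12$ by duality against $H^{\tfrac12-\alpha-\beta}$, where the one-dimensional Bernstein inequality and the hypothesis $\alpha+\beta>0$ furnish the geometric sum $\|S_{k+C}\varphi\|_{L^\infty}\lesssim 2^{k(\alpha+\beta)}\|\varphi\|_{H^{\mez-\alpha-\beta}}$. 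Two small points of imprecision worth tightening: in the bound $\|u_k\|_{L^1}\lesssim\|\Delta_k a\|_{L^2}\|\Delta_k b\|_{L^2}$ you should keep the finite sum $\sum_{|j-k|\le 3}\|\Delta_j a\|_{L^2}$, which only shifts the $\ell^2$ sequence $(a_k)$ by a bounded convolution and does not change the conclusion; and the final remark that the discrepancy between the Fourier-multiplier definition \eqref{eq.para} of $T_a$ and the Littlewood--Paley paraproduct is harmless cannot be inferred from the paragraph preceding the statement (which assumes $a\in W^{\rho,\infty}$, unavailable here when $\alpha\le\mez$). However, the discrepancy has exactly the same resonant structure $\sum_k(\text{blocks of }a\text{ near }2^k)(\text{blocks of }b\text{ near }2^k)$ as $R(a,b)$, so it obeys the very estimate you just proved; a sentence to that effect would close the argument cleanly.
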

We use the following result which is a consequence of \eqref{esti:quant2} with $m=m'=0,\rho=1.$
\begin{lemm}\label{Tadelta}
Let $s > 2+\frac{1}{2} $ and $a \in  W^{1,\infty}(\xR)$. Then for all $\s \in \xR$ there exists a constant $C>0$ such that for all $j\in \xN$,
$$ \Vert [\Delta_j,T_a]u \Vert_{H^{\s + 1} (\xR)} \leq C \Vert  a \Vert_ {W^{1,\infty}(\xR)} \Vert u \Vert_{H^\s(\xR)}.$$
\end{lemm}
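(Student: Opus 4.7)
The plan is to derive this commutator estimate as a direct consequence of the composition theorem \ref{theo:sc}, applied with $m = m' = 0$ and $\rho = 1$. The key observation is that each dyadic block $\Delta_j$ can itself be interpreted as a paradifferential operator whose symbol is independent of $x$.

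First, I would set $b_j(\xi) \defn \phi_j(\xi) - \phi_{j-1}(\xi)$. Since $b_j$ depends only on $\xi$, its Fourier transform in $x$ is a Dirac mass at $\theta = 0$, so the defining formula \eqref{eq.para} for $T_{b_j}$ simplifies to the Fourier multiplier $u \mapsto \mathcal{F}^{-1}(b_j(\xi) \psi(\xi) \widehat{u}(\xi))$. For $j$ sufficiently large the cutoff $\psi$ is identically $1$ on the support of $b_j$, so $T_{b_j} = \Delta_j$, while for the finitely many small values of $j$ the operator $\Delta_j - T_{b_j}$ is a Fourier multiplier with compactly supported, smooth symbol, hence maps $H^\sigma$ to $H^{\sigma+1}$ trivially. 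So without loss of generality one may replace $\Delta_j$ by $T_{b_j}$.

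Next, I would apply Theorem \ref{theo:sc} to $T_a T_{b_j}$ and to $T_{b_j} T_a$. Because $a$ depends only on $x$, we have $\partial_\xi^\alpha a = 0$ for $|\alpha| \ge 1$, so with $\rho = 1$ the asymptotic symbol reduces to
\[
a \# b_j \;=\; a \, b_j \;=\; b_j \# a,
\]
and subtracting gives $[T_{b_j}, T_a] = (T_{b_j} T_a - T_{b_j a}) - (T_a T_{b_j} - T_{a b_j})$, an operator of order $\le -1$. The quantitative bound \eqref{esti:quant2} then yields
\[
\lA [T_{b_j}, T_a] u \rA_{H^{\sigma+1}} \;\le\; K\, M_1^0(a)\, M_1^0(b_j)\, \lA u \rA_{H^\sigma}.
\]
Clearly $M_1^0(a) \le C \|a\|_{W^{1,\infty}}$. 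The crux of the argument is the uniform bound $M_1^0(b_j) \le C$ independent of $j$: this holds because $b_j$ is supported in $|\xi| \sim 2^j$, where $|\partial_\xi^\alpha b_j(\xi)| \lesssim 2^{-j|\alpha|}$, which is exactly matched by the weight $(1+|\xi|)^{|\alpha|} \sim 2^{j|\alpha|}$ appearing in the definition of $M_1^0$. Combining these bounds delivers the claimed inequality.

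There is no serious obstacle here; the only point requiring care is the verification that the seminorm of the symbol $b_j$ is uniform in $j$ (which is really just the standard Littlewood--Paley scaling), together with handling the small-$j$ discrepancy between $\Delta_j$ and $T_{b_j}$ coming from the low-frequency cutoff $\psi$ in \eqref{eq.para}. Once these two bookkeeping points are dispatched, the lemma is a one-line corollary of Theorem \ref{theo:sc}.
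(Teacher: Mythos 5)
Your proof is correct and takes essentially the same approach as the paper: the remark preceding the lemma in the paper ("a consequence of \eqref{esti:quant2} with $m=m'=0,\rho=1$") points precisely to the argument you give, namely viewing $\Delta_j$ (up to the low-frequency discrepancy for $j\le 1$) as the paradifferential operator associated to the $x$-independent symbol $b_j(\xi)=\phi_j(\xi)-\phi_{j-1}(\xi)$, applying Theorem~\ref{theo:sc} to both orderings, and noting that the seminorms $M_1^0(b_j)$ are bounded uniformly in $j$ by the Littlewood--Paley scaling.
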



\subsection{The Dirichlet-Neumann operator}
\begin{lemm} \label{Geta}
Let $s> 2 + \mez$ and $1\leq \s \leq s$.  Then there exists an increasing function $C: \xR ^ + \rightarrow \xR^+$ such that for all $(\eta, \psi) \in H^{s+ \mez} ( \xR) \times H^s (\xR)$
$$ \|G(\eta) \psi\|_{H^{\s-1} ( \xR)} \leq C( \|\eta\|_{H^{s+ \mez} ( \xR)} )\| \psi \|_{H^{\s} ( \xR)}.
$$
Furthermore, if  $(\eta, \psi) \in L^\infty (I; H^{s+ \mez} ( \xR) \times H^s (\xR))$ is a solution of~\eqref{system}. Then
\begin{equation}\label{2.6}
\partial_t ( G(\eta) \psi) = G(\eta) ( \partial_t \psi - \mathfrak{B} \partial_t \eta) - \cnx ( V \partial_t \eta)
\end{equation}
where
\begin{equation}\label{BV}
 \mathfrak{B}(t,x) := \frac{ \partial_x \psi \partial_x \eta + G(\eta) \psi} { 1+ |\partial_x\eta| ^2}, \qquad V(t,x) := \partial_x \psi - \mathfrak{B} \partial_x \eta
\end{equation}
\end{lemm}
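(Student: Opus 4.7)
\smallskip
\noindent\emph{Proof proposal.} The plan is to treat the two assertions separately, both relying on the variational construction of the harmonic extension $\phi$ in $\Omega_t$ recalled from \cite{ABZ}.

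For the boundedness estimate, I would flatten the free surface via the change of variables $(x,y)\mapsto(x,y-\eta(t,x))$ (or a smoothed version of it) in a neighbourhood of $\Sigma_t$. This transforms the Dirichlet problem \eqref{dphi} into an elliptic boundary-value problem on a fixed strip $\{z\in[-h,0]\}$ with variable coefficients depending on $\eta$ and its first two derivatives, and with Dirichlet datum $\tilde\phi|_{z=0}=\psi$. Standard tame elliptic estimates, as established in detail in \cite[Section 2]{ABZ}, then yield
$$
\lA \pxz\tilde\phi\rA_{L^2_zH^{\sigma-1}_x}\le C\big(\lA\eta\rA_{H^{s+\mez}}\big)\lA\psi\rA_{H^\sigma}\qquad (1\le\sigma\le s),
$$
and the claimed bound on $G(\eta)\psi=(\partial_z\tilde\phi-\partialx\eta\cdot\partialx\tilde\phi)|_{z=0}$ follows from the trace theorem on the fixed strip combined with Theorem~\ref{ab-} for the product $\partialx\eta\cdot\partialx\tilde\phi|_{z=0}$.

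For the identity \eqref{2.6}, the key observation is the Hadamard shape-derivative formula
$$
dG(\eta)[\delta\eta]\,\psi = -G(\eta)\big(\mathfrak{B}\,\delta\eta\big) - \cnx\big(V\,\delta\eta\big),
$$
which, applied with $\delta\eta=\partial_t\eta$, is equivalent to \eqref{2.6} via $\partial_t(G(\eta)\psi)=G(\eta)\partial_t\psi+dG(\eta)[\partial_t\eta]\psi$. To derive the shape derivative I would set $\mathfrak{B}:=\partial_y\phi|_{y=\eta}$ and $V:=\partialx\phi|_{y=\eta}$; the two expressions in \eqref{BV} then follow from the chain rule identities $\partialx\psi=V+\mathfrak{B}\partialx\eta$ and $G(\eta)\psi=\mathfrak{B}-V\cdot\partialx\eta$. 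Differentiating the boundary identity $\phi(t,x,\eta(t,x))=\psi(t,x)$ in $t$ gives that $\partial_t\phi$ is the harmonic extension of $\partial_t\psi-\mathfrak{B}\partial_t\eta$, hence
$$
G(\eta)(\partial_t\psi-\mathfrak{B}\partial_t\eta)=\big(\partial_y(\partial_t\phi)-\partialx\eta\cdot\partialx(\partial_t\phi)\big)\big|_{y=\eta}.
$$
On the other hand, applying $\partial_t$ directly to $G(\eta)\psi=(\partial_y\phi-\partialx\eta\cdot\partialx\phi)|_{y=\eta}$ produces additional boundary terms proportional to $\partial_t\eta$ times $\partial_y^2\phi$ and $\partialx\partial_y\phi$, which I would simplify using the harmonicity $\partial_y^2\phi=-\Deltax\phi$ and then rearrange into the divergence form $-\cnx(V\partial_t\eta)$. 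Subtracting the two expressions yields \eqref{2.6}.

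The main technical obstacle is to justify these manipulations at the low regularity of the lemma. Since $(\eta,\psi)$ solves \eqref{system} one already has $\partial_t\eta=G(\eta)\psi\in L^\infty_t H^{s-1}_x$ and a similar control for $\partial_t\psi$, so all traces, paraproducts and compositions above are meaningful thanks to the estimates of the previous subsection (in particular Theorem~\ref{ab-}), and the commutation of $\partial_t$ with the trace on the moving surface $\Sigma_t$ can be justified by performing the computation on a smooth approximation $(\eta_\eps,\psi_\eps)$ and passing to the limit using the continuity of $\psi\mapsto G(\eta)\psi$ proved in the first part.
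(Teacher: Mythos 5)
The paper does not contain a proof of Lemma~\ref{Geta}: it is stated in the \emph{Preliminaries} section as a result recalled from \cite{ABZ}, so there is no in-paper argument to compare against. Your sketch follows the same standard route used in \cite{ABZ} and the classical references (Lannes, Alazard--M\'etivier): flatten the domain and prove tame elliptic estimates for the bound, and use the Hadamard shape-derivative formula for \eqref{2.6}. The second part of your proposal is essentially complete: the chain-rule identities $\partial_x\psi = V + \mathfrak{B}\,\partial_x\eta$ and $G(\eta)\psi = \mathfrak{B} - V\,\partial_x\eta$ do recover \eqref{BV}; the Dirichlet datum of $\partial_t\phi$ is $\partial_t\psi - \mathfrak{B}\,\partial_t\eta$; and the cancellation producing the divergence term $-\cnx(V\partial_t\eta)$ does hinge on $\partial_y^2\phi = -\Delta_x\phi$. (You should also note explicitly that the Neumann condition on $\Gamma$ commutes with $\partial_t$ because the bottom is fixed; this is why $\partial_t\phi$ remains the variational solution.)

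There is, however, a genuine gap in your sketch of the estimate. A bound $\lA\pxz\tilde\phi\rA_{L^2_zH^{\sigma-1}_x}\le C(\lA\eta\rA_{H^{s+\mez}})\lA\psi\rA_{H^\sigma}$ does not suffice to evaluate $\pxz\tilde\phi$ on $\{z=0\}$ in $H^{\sigma-1}_x$: combining it with the information $\partial_z\pxz\tilde\phi\in L^2_zH^{\sigma-2}_x$ from the elliptic equation, the trace theorem only gives $H^{\sigma-3/2}_x$, which is half a derivative short. What \cite{ABZ} actually proves --- by paralinearizing the flattened elliptic operator, factoring it into two first-order (in $z$) paradifferential evolution equations, and propagating regularity in $z$ --- is the stronger estimate $\pxz\tilde\phi\in C^0_z\big([-1,0];H^{\sigma-1}_x\big)$, which is precisely what makes the trace step legitimate. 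Your write-up should either quote that $C^0_z$ estimate or reproduce the factorization argument; the $L^2_z$ bound alone is not enough.
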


\subsection{Symmetrization}

We consider a  solution 
$(\eta,\psi)$ of ~\eqref{system} on the time interval $I = [0,T]$ with $0<T<+\infty$, 
 satisfying the assumption \eqref{Ht}   for all $t\in I$ and such that
$$
(\eta,\psi)\in C^0\big(I,H^{s+\mez}(\xR)\times H^{s}(\xR)\big),
$$
for some $s>\frac{5}{2}$. Then we set
\begin{equation}
U = \psi - T_\mathfrak{B}\eta.
\end{equation}
where $\mathfrak{B}$ has been defined in \eqref{BV}.
It follows from  the analysis in \cite{ABZ} that we have the following symmetrization of 
the equations.

\begin{lemm}[{\cite[Corollary 4.9]{ABZ}}]
Let $c$, $c_1$ be defined by
\begin{equation*}
c=\left(1+(\partial_x \eta )^2\right)^{-\frac{3}{4}},\quad 
c_1=\left(1+(\partial_x \eta )^2\right)^{-\mez}.
\end{equation*}
There exists an elliptic symbol $p\in \Sigma^{1/2}_{s-1}$ such that  the complex-valued unknown 
\begin{equation}\label{d.Phi}
\Phi=T_{p} \eta +i T_{c_1}U
\end{equation}
satisfies a  scalar equation of the form
\begin{equation}\label{PhiF}
\partial_{t}\Phi   +T_V\partial_x \Phi  +i \la D_x\ra^\tq T_c \la D_x\ra^\tq \Phi =F,
\end{equation}
where $V$ has been defined in \eqref{BV} and  $F\in L^{\infty}(I,H^{s}(\xR))$. 
\end{lemm}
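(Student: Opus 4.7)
First, I would paralinearize the nonlinearities of~\eqref{system}, following the strategy developed in~\cite{ABZ}. The Dirichlet-Neumann operator admits a paralinearization of the form $G(\eta)\psi = T_\lambda U - T_V\partial_x\eta + f_1$, where $\lambda \in \Sigma^{1}_{s-1}$ has principal symbol $\la\xi\ra$ (indeed in $d=1$, $\lambda \equiv |\xi|$ at principal order), and $f_1 \in L^\infty(I, H^s(\xR))$. Paralinearizing the mean curvature yields $H(\eta) = -T_\ell\eta + f_2$, with $\ell \in \Sigma^{2}_{s-1}$ of principal symbol $\xi^2(1+(\partial_x\eta)^2)^{-\tdm}$ and $f_2 \in L^\infty(I, H^s)$. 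Combining these with the paralinearization of the quadratic terms in the Bernoulli equation and the change of unknown $U = \psi - T_\mathfrak{B}\eta$, system~\eqref{system} becomes
$$
\partial_t\eta + T_V\partial_x\eta = T_\lambda U + g_1,\qquad \partial_t U + T_V\partial_x U + T_a\eta + T_\ell\eta = g_2,
$$
where $g_1, g_2 \in L^\infty(I, H^s)$ and $a$ is the Taylor coefficient.

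Second, I would determine the symmetrizer $p$. Discarding the transport terms and the subprincipal $T_a\eta$, the principal matrix of the system has imaginary eigenvalues $\pm i\gamma$ with $\gamma = \sqrt{\lambda\ell}$, whose principal symbol is $c\la\xi\ra^\tdm$ for $c = (1+(\partial_x\eta)^2)^{-\tq}$. Substituting $\Phi = T_p\eta + iT_{c_1}U$ into $\partial_t\Phi = -i\gamma\Phi + \cdots$ and matching principal symbols forces both relations $p\lambda = \gamma c_1$ and $c_1\ell = \gamma p$, each of which yields
$$
p \sim c\,c_1\la\xi\ra^\mez = (1+(\partial_x\eta)^2)^{-5/4}\la\xi\ra^\mez,
$$
an elliptic symbol of order $\mez$. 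The subprincipal terms of $p$ are then chosen iteratively in $\Sigma^{\mez}_{s-1}$ so that both composition errors
$$
T_pT_\lambda - \la D_x\ra^\tq T_c\la D_x\ra^\tq T_{c_1}, \qquad T_{c_1}T_\ell - \la D_x\ra^\tq T_c\la D_x\ra^\tq T_p
$$
become operators of sufficiently low order (bounded from $H^s$ to $H^s$). This step is governed by the composition rule of Theorem~\ref{theo:sc}.

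Third, I would differentiate $\Phi = T_p\eta + iT_{c_1}U$ in time and substitute the paralinearized equations. The terms in which $\partial_t$ falls on the coefficients, $T_{\partial_t p}\eta$ and $iT_{\partial_t c_1}U$, are controlled using $\partial_t\eta = G(\eta)\psi \in H^{s-1}$; the resulting symbols lie in $\Gamma^{\mez}_{s-3/2}$ and $\Gamma^{0}_{s-3/2}$, and the induced operators send $\eta, U$ into $H^s$. The transport terms $-T_V\partial_x\Phi$ appear modulo commutators $[T_V\partial_x, T_p]$ and $[T_V\partial_x, T_{c_1}]$, which by Theorem~\ref{theo:sc} lose only a fraction of a derivative and hence land in $L^\infty(I, H^s)$. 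Combined with the principal symbol matching from the second step, this gives~\eqref{PhiF} with an $F$ of the required regularity.

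The main obstacle is the regularity bookkeeping. Since $s > 5/2$, the symbols $p, c, c_1$ and the coefficients $V, \mathfrak{B}$ have only finitely many derivatives (inherited from $\eta \in H^{s+\mez}$, $\psi \in H^s$), so every composition or commutator in the paradifferential calculus generates a precisely quantifiable residue. One must carefully verify that the sum of all such residues—coming from the composition errors $T_aT_b - T_{a\# b}$, from the commutators above, from the lower-order Taylor term $T_a\eta$, and from the gap between the self-adjoint $T_\gamma$ and its symmetric surrogate $\la D_x\ra^\tq T_c\la D_x\ra^\tq$—lies in $L^\infty(I, H^s)$. This accounting, carried out in full detail in~\cite{ABZ}, is the substantive content of the lemma.
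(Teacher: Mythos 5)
The paper does not prove this lemma: it is quoted verbatim as Corollary~4.9 of~\cite{ABZ}, with no argument given here. Your reconstruction nonetheless follows the actual route taken in that reference: paralinearization of $G(\eta)\psi$ and of the mean curvature, the good unknown $U=\psi-T_{\mathfrak B}\eta$, symmetrization via $\gamma=\sqrt{\lambda\ell}$ with principal symbol $c\la\xi\ra^{3/2}$, and the resulting elliptic symmetrizer $p\sim c\,c_1\la\xi\ra^{1/2}=(1+(\partial_x\eta)^2)^{-5/4}\la\xi\ra^{1/2}$. The two compatibility conditions $p\lambda=\gamma c_1$ and $c_1\ell=\gamma p$ are indeed simultaneously solvable precisely because $\gamma^2=\lambda\ell$, which you observe implicitly by noting both give the same principal part. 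You correctly flag the two remaining technical steps (controlling $T_{\partial_t p}\eta$, $T_{\partial_t c_1}U$ via the evolution equation, and replacing $T_\gamma$ by the exactly symmetric $\la D_x\ra^{3/4}T_c\la D_x\ra^{3/4}$), and you are right that the delicate part at the threshold $s>5/2$ is the bookkeeping of all composition and commutator residues, which is the content of~\cite{ABZ}. In short: the approach is the same as the one the paper relies on, and the outline is sound.
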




\section{Reductions}

\subsection{Change of variables}
Our aim in this section is to simplify the equation \eqref{PhiF} by a change of variable.
 To compute the effect of a change of variable 
we shall use Alinhac's paracomposition operators and
we refer to~\cite{Alipara} for the general theory  .
 
 Let $\kappa$ be  a $C^1$ diffeomorphism from $\xR$ to $\xR$. 
We  define the operator $\kappa^*$   by,
\begin{equation}\label{defkappa}
\kappa^* u =u\circ \kappa - T_{(\partial_x u) \circ \kappa}\kappa.
\end{equation}
One of the main properties of $\kappa^*$ is that there is a symbolic calculus theorem which
allows to compute the equation satisfied by~$\kappa^* u$ in terms of the equation satisfied by $u$
(in analogy with the paradifferential calculus). 

\begin{theo}\label{theo:paracomp2}
Let $ d \geq 1, m\in\xR$, $r>1$, $\rho>0$ and set $\sigma\defn \inf\{\rho,r-1\}$. 
Consider a $C^{r}(\xR^d)$-diffeomorphism $\chi$ and set $ \kappa = \chi^{-1}.$
 Let a be symbol in $\Sigma^{m}_{\rho}(\xR^d)$. 
Then there exists $a^*\in \Sigma^{m}_{\sigma}(\xR^d)$ such that
$$
\kappa^* T_{a}  - T_{a^*} \kappa^* \quad\text{is order }\le m- \sigma.
$$
Moreover one can give an explicit formula for $a^*$.
If   $a = \sum a_{m-k}$, then
\begin{equation}\label{defi*}
a^*(\chi(x),\eta)=\sum_{\alpha}  
\frac{1}{i^ {\vert \alphaÊ\vert} \alpha !}
\partial_{\xi}^{\alpha} a_{m-k}(x,{}^t\chi'(x) \eta) \partial_{y}^\alpha (e^{i\Psi_{x}(y)\cdot\eta})\arrowvert
_{y=x},
\end{equation}
where the sum is taken over all $\alpha\in \xN^d$ such that the summand is well defined, 
$\chi'(x)$ is the differential of $\chi$, $t$ denotes transpose and
\begin{equation}\label{defiPsi}
\Psi_{x}(y)=\chi(y)-\chi(x)-\chi'(x)(y-x).
\end{equation}
\end{theo}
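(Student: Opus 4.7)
The plan is to reduce to dyadic pieces via Littlewood-Paley decomposition and to transport the symbolic expansion through paracomposition block by block, exploiting the fact that $\kappa^*$ is designed precisely to preserve spectral localization under change of variables. The explicit formula \eqref{defi*} is then recognized as coming from the stationary-phase/Taylor expansion of the phase $\chi(y)\cdot\eta$ around the critical point $y=x$, with $\Psi_x(y)$ measuring the defect of $\chi$ from being affine.

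Concretely, I would first use the definition $\kappa^* u = u\circ\kappa - T_{(\partial_x u)\circ\kappa}\kappa$ together with a Bony decomposition to check that, if $v$ has Fourier spectrum in the annulus $\{\la\xi\ra\sim 2^k\}$, then $\kappa^* v$ also has spectrum essentially of size $2^k$ (up to a smoothing remainder of order $-(r-1)$ coming from the subtracted paraproduct). This is the property that lets one write $\kappa^*$ diagonally with respect to a Littlewood-Paley decomposition, modulo a remainder of order $-\sigma$.

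Next, I would apply $\kappa^*$ to the dyadic expansion
\[
T_a u \sim \sum_{k}\Delta_k(T_a u),\qquad \Delta_k(T_a u)\approx a\bigl(x,2^k\xi\bigr)\Delta_k u,
\]
and write each $\kappa^*(a(\cdot,2^k\cdot)\Delta_k u)$ by Bony-linearizing the composition. At the level of symbols, the principal contribution gives $a(\kappa(x),\,{}^t\chi'(x)\eta)\kappa^*\Delta_k u$, which is the $\alpha=0$ term in \eqref{defi*}. The lower-order terms are obtained by expanding the oscillatory factor
\[
e^{i(\chi(y)-\chi(x))\cdot\eta}=e^{i\chi'(x)(y-x)\cdot\eta}\,e^{i\Psi_x(y)\cdot\eta}
\]
and Taylor-expanding $e^{i\Psi_x(y)\cdot\eta}$ at $y=x$: each derivative $\partial_y^{\alpha}(e^{i\Psi_x(y)\cdot\eta})|_{y=x}$ is paired, via an integration by parts in $\xi$, with a corresponding $\partial_\xi^\alpha a_{m-k}(x,{}^t\chi'(x)\eta)$, producing exactly the summand in \eqref{defi*}. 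Because $\Psi_x$ vanishes to second order at $y=x$, each term in $\alpha$ gains regularity, so the truncation at order $\sigma$ yields the right remainder class $\Sigma^m_\sigma$.

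The main obstacle is the careful bookkeeping of the two independent regularity scales: the symbol $a$ is only $C^\rho$ in $x$, while $\chi$ is only $C^r$, so each $\partial_y^\alpha \Psi_x$ appearing in the expansion has limited smoothness, and one must verify both that the tail of the asymptotic expansion defines an operator of order $\le m-\sigma$ and that the resulting symbol $a^*$ really lies in $\Sigma^m_\sigma$. This is handled by truncating at order $[\sigma]$, using Theorem~\ref{theo:sc0} and Theorem~\ref{theo:sc} to bound the operator norms of each remainder piece by the seminorms $M^\bullet_\bullet$ of the ingredients, and estimating the residual Taylor remainder in $\Psi_x$ by its $C^\sigma$ modulus of continuity, exactly as in Alinhac~\cite{Alipara}. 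Once this technical step is in place, the identification \eqref{defi*} and the order-$(m-\sigma)$ bound on $\kappa^* T_a - T_{a^*}\kappa^*$ follow by summing the dyadic pieces.
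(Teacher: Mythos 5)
This theorem is not proved in the paper at all: the authors explicitly state that they use Alinhac's paracomposition operators and ``refer to~\cite{Alipara} for the general theory.'' Theorem~\ref{theo:paracomp2} is quoted as a black box from Alinhac's 1986 paper, so there is no in-paper proof to compare your proposal against.

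That said, your outline does capture the essential mechanism of Alinhac's argument: the preservation of dyadic frequency localization by $\kappa^*$ (modulo a smoothing remainder of order $-(r-1)$ coming from the subtracted paraproduct), the separation of the phase $e^{i\chi(y)\cdot\eta}$ into the affine factor $e^{i\chi'(x)(y-x)\cdot\eta}$ and the curvature factor $e^{i\Psi_x(y)\cdot\eta}$, the Taylor expansion of the latter at $y=x$ paired with $\xi$-derivatives of $a$, and the observation that $\Psi_x$ vanishes to second order so that each term gains regularity. However, what you have written remains a proposal rather than a proof: the decisive technical step --- showing that the Taylor remainder in $\Psi_x$ together with the tail of the symbolic expansion produces an operator of order $\le m-\sigma$ under the two independent limited regularities ($a\in C^\rho$ in $x$, $\chi\in C^r$) --- is precisely the hard content of Alinhac's theorem, and you defer it back to \cite{Alipara}. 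In particular, the claim that ``$\kappa^* v$ also has spectrum essentially of size $2^k$'' needs a quantitative version (the image spectrum is not literally contained in a dyadic shell; it spreads, and one must control the off-diagonal decay), and the identity $\Delta_k(T_a u)\approx a(x,2^k\xi)\Delta_k u$ is only a heuristic. If you want a self-contained argument you would need to reproduce the commutator estimates and the frequency-localization lemma from Alinhac's paper; as it stands, the proposal is a faithful summary of the strategy but not an independent proof.
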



 We note that it is easy to obtain regularity results on 
$u$ given results on $\kappa^* u$. Namely, we have the following lemma.

\begin{lemm}
Let $\rho\ge 0, \kappa\in W^{\rho,\infty}(\xR)$ and $ u\in W^{1,\infty}(\xR)$. If $\kappa^*u\in W^{\rho,\infty}(\xR)$ then $u\in W^{\rho,\infty}(\xR)$.
\end{lemm}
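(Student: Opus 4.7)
The key observation is that the defining relation \eqref{defkappa} rearranges to the pointwise identity
$$u\circ \kappa = \kappa^* u + T_{(\partial_x u)\circ \kappa}\,\kappa.$$
My strategy is to prove first that the right-hand side lies in $W^{\rho,\infty}(\xR)$, which will give $u\circ \kappa\in W^{\rho,\infty}$, and then to recover $u$ itself by composing with the inverse diffeomorphism $\kappa^{-1}$.

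For the right-hand side, the first summand $\kappa^* u$ is in $W^{\rho,\infty}$ by hypothesis. For the paraproduct term, the assumption $u\in W^{1,\infty}$ together with the continuity of $\kappa$ ensures $(\partial_x u)\circ \kappa\in L^\infty$. Bony's paraproduct $T_a$ with $a\in L^\infty$ is continuous on the H\"older--Zygmund scale: for $\rho\notin\xN$ this follows from $W^{\rho,\infty}=B^\rho_{\infty,\infty}$ and the standard boundedness of $T_a$ on $B^\rho_{\infty,\infty}$, while for integer $\rho$ one differentiates $\rho$ times and absorbs the commutators $[\partial_x^k,T_{(\partial_x u)\circ \kappa}]$ into lower-order paraproducts. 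Applying this with $b=\kappa\in W^{\rho,\infty}$ yields $T_{(\partial_x u)\circ \kappa}\kappa\in W^{\rho,\infty}$, with norm controlled by $\lA(\partial_x u)\circ \kappa\rA_{L^\infty}\lA\kappa\rA_{W^{\rho,\infty}}$. Hence $u\circ \kappa\in W^{\rho,\infty}$.

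To descend from $u\circ \kappa$ to $u$, I use that $\kappa$ is a $C^1$-diffeomorphism of $\xR$ with $\kappa\in W^{\rho,\infty}$, hence $\kappa'$ is bounded and, locally uniformly, bounded away from zero. Iterating the chain rule on the identity $\kappa\circ \kappa^{-1}=\text{id}$ (or invoking Fa\`a di Bruno) then gives $\kappa^{-1}\in W^{\rho,\infty}$ as well. Writing $u=(u\circ \kappa)\circ \kappa^{-1}$ and applying the classical composition estimate in H\"older--Zygmund spaces concludes the proof.

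The only genuinely delicate step is the paraproduct boundedness on $W^{\rho,\infty}$ at integer $\rho$, where $W^{\rho,\infty}$ is strictly smaller than $B^\rho_{\infty,\infty}$ and a direct Besov argument is not available; the commutator/induction argument sketched above is the standard workaround. All remaining ingredients are routine consequences of paradifferential calculus and the chain rule for H\"older functions.
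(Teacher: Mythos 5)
Your proof follows essentially the same route as the paper's one-line argument: rearrange the definition to $u\circ\kappa = \kappa^*u + T_{(\partial_x u)\circ\kappa}\,\kappa$, control the paraproduct term, and compose with $\chi=\kappa^{-1}\in W^{\rho,\infty}(\xR)$. You merely spell out the paraproduct boundedness on the H\"older scale and the composition estimate that the paper treats as standard.
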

\begin{proof}
This follows from the fact that 
$u=(\kappa^* u)\circ \chi +(T_{(\partial_x u)\circ\kappa)}\kappa)\circ \chi$ 
where $\chi=\kappa^{-1}\in W^{\rho,\infty}(\xR)$.
\end{proof}


We are now ready to simplify \eqref{PhiF}. Define $\chi$ by 
 \begin{equation}\label{CHI}
\chi(t,x)=\int_0^x c(t,y)^{-\frac{2}{3}}\, dy = \int_0^x\sqrt{1+(\partial_y \eta (t,y))^2}\, dy,
\end{equation}
so that
$$
\partial_x\chi(t,x)= \sqrt{1+(\partial_x \eta (t,x))^2} = c(t,x)^{-\frac{2}{3}}.
$$
Then for each $t\in [0,T]$, $x\mapsto \chi(t,x)$ is a diffeomorphism from $\xR$ to $\xR$. 
Introduce its inverse 
\begin{equation}
\kappa=\chi^{-1}.
\end{equation}

 \subsubsection{Notations:} 
  We shall set $I = [0,T]$ and we shall denote
\begin{equation}
A = C\big( \Vert (\eta, \psi) \Vert_{L^\infty (I, H^{s+ \mez}(\xR) \times H^{s}(\xR)} \big)
\end{equation}
where $C: \xR^+ \to \xR^+$  is an increasing function which may change from line to line. Moreover we shall denote by $f \circ \kappa$ the function
\begin{equation}\label{frondkappa}
(f \circ \kappa) (t,x) = f(t,\kappa(t,x)).
\end{equation}
\subsubsection{Estimates of $\chi$ and $\kappa$.}
From \eqref{CHI}, the equation $\partial_t \eta = G(\eta) \psi$, the Lemma \ref{Geta}, the H\"older inequality and the fact that $s> 2+ \mez$ we deduce,

\begin{equation}\label{chi_t}
\Vert \partial_t \chi \Vert_{L^\infty (I \times \xR)} \leq A.
\end{equation}
Now since 
$$ \partial_x \chi (t,x) = 1 + f(\partial_x \eta), \quad f\in C^\infty(\xR), f(0) = 0,$$
we deduce from the assumption $s>2+ \mez$ and the Sobolev embedding that,
\begin{equation}\label{chi_x}
 \Vert \partial_x \chi (t,x) - 1\Vert_{L^\infty(I,H^{s- \mez}(\xR))} +
\Vert \partial_x \chi \Vert_{L^\infty (I \times \xR)} \leq A.
\end{equation}
Let us consider the function $\kappa$.

Since $\partial_t \kappa = - \frac{\partial_t \chi}{\partial_x \chi} \circ \kappa$ we have, using \eqref{chi_t},
\begin{equation}\label{kappa_t}
 \Vert \partial_t \kappa \Vert_{L^\infty (I \times \xR)} \leq A.
\end{equation}

On the other hand we have $\partial_x \kappa =  1 + f(\partial_x \eta)$ where $ f\in C^\infty(\xR), f(0) = 0.$ It follows that,
\begin{equation} \label{kappaHs}
\Vert \partial_x \kappa - 1 \Vert_{L^\infty(I, H^{s-\mez}(\xR))} \leq A.
\end{equation}

It is clear from the definition that we have,
\begin{equation}\label{kappa_x}
\vert \partial_x \kappa (t,x) \vert \leq 1, \quad \forall (t,x) \in I\times \xR.
\end{equation}
It follows then by induction that for every $p \in \xN$ we have,
\begin{equation}\label{kappaWp}
\Vert \kappa \Vert_{L^\infty(I, W^{p, \infty}(\xR))} \leq C \big( \Vert \eta \Vert_{L^\infty(I, W^{p, \infty}(\xR))}\big).
\end{equation}
To go further we shall need the following elementary lemma.
\begin{lemm} \label{urondkappa}
Let $p \in \xN^*$ and $ \kappa : \xR \to \xR $ be  a diffeomorphism such that $\partial_x\kappa \in W^{p-1,\infty} (\xR)$. Set  $\chi = \kappa ^{-1}.$ Then for all  $F \in H^{\mu} (\xR) $ with $  0 \leq \mu \leq p$ we have $F \circ \kappa \in H^{\mu} (\xR)$ and
\begin{equation}
\Vert F \circ \kappa \Vert_{H^{\mu}(\xR)} \leq \Vert \chi' \Vert_{L^{\infty} (\xR)} C\big( \Vert \partial_x \kappa \Vert_{W^{p-1, \infty}(\xR)} \big ) \Vert F \Vert_{H^{\mu}(\xR)}
\end{equation}
where $C $ is an increasing function from $\xR^+$ to $\xR^+$.
\end{lemm}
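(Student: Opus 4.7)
The plan is to treat the integer cases first by a change of variables and Faà di Bruno (or induction), then to obtain the general case by interpolation.

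First I would handle $\mu = 0$. By the change of variables $y = \kappa(x)$, $dx = \chi'(y)\,dy$, one gets
\begin{equation*}
\|F\circ\kappa\|_{L^2(\xR)}^2 = \int_{\xR}|F(\kappa(x))|^2\,dx = \int_{\xR}|F(y)|^2\chi'(y)\,dy \le \|\chi'\|_{L^\infty(\xR)}\|F\|_{L^2(\xR)}^2,
\end{equation*}
which is the claim (up to the harmless square root, absorbed in $C$).

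Next, the integer case $\mu = k$ with $1 \le k \le p$. By Faà di Bruno's formula,
\begin{equation*}
\partial_x^k(F\circ\kappa) = \sum_{j=1}^{k}\bigl((\partial^jF)\circ\kappa\bigr)\,P_{k,j}(\partial_x\kappa,\dots,\partial_x^k\kappa),
\end{equation*}
where each $P_{k,j}$ is a universal polynomial whose monomials are products of factors $\partial_x^{\ell_i}\kappa$ with $1\le\ell_i\le k\le p$. Since $\partial_x\kappa\in W^{p-1,\infty}(\xR)$, each such factor lies in $L^\infty(\xR)$ and
\begin{equation*}
\|P_{k,j}(\partial_x\kappa,\dots,\partial_x^k\kappa)\|_{L^\infty(\xR)} \le C\bigl(\|\partial_x\kappa\|_{W^{p-1,\infty}(\xR)}\bigr).
\end{equation*}
Combining with the $L^2$ estimate applied to $\partial^jF$, I get
\begin{equation*}
\|\partial_x^k(F\circ\kappa)\|_{L^2(\xR)} \le C\bigl(\|\partial_x\kappa\|_{W^{p-1,\infty}(\xR)}\bigr)\|\chi'\|_{L^\infty(\xR)}^{1/2}\sum_{j=1}^{k}\|\partial^jF\|_{L^2(\xR)},
\end{equation*}
which, summed with the $\mu=0$ bound, yields the result for integer $\mu$.

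Finally, for non-integer $\mu \in (0,p)$ with $k < \mu < k+1 \le p$, I would interpolate: writing $\mu = (1-\theta)k + \theta(k+1)$ with $\theta\in(0,1)$, complex interpolation between the inequalities at $\mu=k$ and $\mu=k+1$ (both with the same multiplicative constant $\|\chi'\|_\infty\,C(\|\partial_x\kappa\|_{W^{p-1,\infty}})$ on the right-hand side) gives
\begin{equation*}
\|F\circ\kappa\|_{H^\mu(\xR)} \le \|\chi'\|_{L^\infty(\xR)}\,C\bigl(\|\partial_x\kappa\|_{W^{p-1,\infty}(\xR)}\bigr)\|F\|_{H^\mu(\xR)}.
\end{equation*}
The main (mild) obstacle is purely bookkeeping: making sure the Faà di Bruno polynomials only involve derivatives up to order $p$ of $\kappa$, which is precisely what the hypothesis $\partial_x\kappa\in W^{p-1,\infty}$ guarantees, and verifying that the constant $C$ can be chosen monotone in $\|\partial_x\kappa\|_{W^{p-1,\infty}}$ uniformly in $\mu$, so that the interpolation step preserves the stated form of the estimate.
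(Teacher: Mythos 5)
Your proof is correct. The paper states this as an ``elementary lemma'' and omits the proof entirely, so there is no author argument to compare against; your route (change of variables for $\mu=0$, Fa\`a di Bruno for integer $\mu\le p$, complex interpolation for non-integer $\mu$) is the natural one and all the key points are handled: the Bell polynomials involve only derivatives $\partial_x^{\ell}\kappa$ with $1\le\ell\le k\le p$, which is exactly what $\partial_x\kappa\in W^{p-1,\infty}$ controls, and $[H^k,H^{k+1}]_\theta=H^{k+\theta}$ with the usual operator-norm bound passes the form of the constant through. One minor bookkeeping remark: the change of variables actually yields $\|\chi'\|_{L^\infty}^{1/2}$ rather than $\|\chi'\|_{L^\infty}$, which is not, strictly speaking, absorbable into $C(\|\partial_x\kappa\|_{W^{p-1,\infty}})$; this is harmless in the paper's application (where $\|\chi'\|_{L^\infty}\ge 1$ so the square root is smaller), but if you want the estimate exactly in the stated form you should note this.
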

Now in our case for (almost all) fixed $t$ and  all $\eps >0$ we have,
\begin{equation}
\Vert\partial_x \kappa(t,\cdot) \Vert_{W^{p-1, \infty}(\xR)} \leq C(\Vert \eta(t,\cdot) \Vert_{W^{p, \infty}(\xR)}) \leq C(\Vert \eta(t,\cdot) \Vert_{H^{p+\frac{1}{ 2} + \eps}(\xR)}). 
\end{equation}
  We deduce then from Lemma \ref{urondkappa} that for $0 \leq \mu \leq s-1$ and $ F \in L^\infty(I, H^\mu (\xR))$ we have,
\begin{equation}\label{FKappa}
\Vert F  \circ \kappa \Vert_{L^\infty(I, H^{\mu}(\xR))} \leq   A \Vert F \Vert_{L^\infty(I, H^{\mu}(\xR))}.
\end{equation}
Coming back to the regularity of $\chi$ we deduce from \eqref{CHI}  that,
 $$ \partial_x^2 \chi = \frac{(\partial_x \eta)(\partial^2_x \eta)}{(1+(\partial_x \eta)^2)^\mez}.$$
 It follows from \eqref{FKappa} that,
 \begin{equation}\label{chi_xx}
 \Vert ( \partial_x^2 \chi) \circ \kappa \Vert_{L^\infty(I, H^{s-\frac 3 2 }(\xR))} \leq A.
 \end{equation}
 On the other hand we have,$$
  \partial_x \partial_t \chi = \frac{(\partial_x \eta)\partial_x (G(\eta)\psi)}{(1+(\partial_x \eta)^2)^\mez}.$$
 So using Lemma \ref{Geta} and \eqref{FKappa} we obtain,
 \begin{equation}\label{chi_tx}
 \Vert (\partial_x\partial_t  \chi) \circ \kappa \Vert_{L^\infty(I, H^{s-2}(\xR))} \leq A.
 \end{equation}
 Now we would like to estimate $\partial^2_t \chi$. Since $\partial_t \eta = G(\eta) \psi$ we have,
\begin{align}\label{K}
\partial^2_t \chi(t, x) &= - \int_0^x \frac{[\partial_x \eta \partial_x(G(\eta)\psi)]^2}{(1+(\partial_x\eta)^2)^{ \frac{3}{2}}} dy+ \int_0^x \frac{[\partial_x(G(\eta)\psi)]^2}{ (1+(\partial_x\eta)^2)^{ \frac{1}{2}}} dy\\
&+  \int_0^x \frac{ \partial_x \eta \partial_x \partial_t(G(\eta)\psi)}{ (1+(\partial_x\eta)^2)^{ \frac{1}{2}}}dy.
\end{align}
 Since $s>2+ \mez$, the H\" older inequality and Lemma \ref{Geta}  show that the first two terms are pointwise bounded by $A.$
 By the Holder inequality the last term can be pointwise bounded by 
 $$ \Vert \partial_x \eta \Vert_{L^\infty(I, L^2(\xR))} \Vert \partial_x \partial_t\big (G(\eta)\psi\big )\Vert_{L^\infty(I, L^2(\xR))}.$$ 
 
 Using \eqref{2.6} and the equation satisfied by $(\eta,\psi)$ we find,  if $s>3+ \mez$,
\begin{equation*}
 \Vert \partial_x \partial_t \big (G(\eta)\psi \big )\Vert_{L^\infty(I, L^2(\xR))} \leq A.
\end{equation*}
Therefore if  $s>3+ \mez$ we obtain,
\begin{equation}\label{chi_tt}
\Vert \partial^2_t \chi \Vert_{L^\infty (I \times \xR)} \leq A. 
\end{equation}
Finally let us estimate the term $\partial_x \partial^2_t \chi$.  Using again \eqref{2.6} and the equation satisfied by $(\eta,\psi)$ we find, if $s>4$,  that,
\begin{equation}\label{chi_xtt}
\Vert \partial_x \partial^2_t \chi \Vert_{L^\infty(I, H^{s-\frac{7}{2}}(\xR))} \leq A.
\end{equation}

\subsubsection{Reduction of the equation}
With  $V$ defined in \eqref{BV} and $\Phi$ defined in \eqref{d.Phi}  let us  set (see \eqref{frondkappa}),
\begin{equation}\label{eq.W}
W = V\circ\kappa (\partial_x\chi \circ\kappa )+\partial_t \chi\circ\kappa,
\end{equation}
\begin{equation}\label{d.Phi*}
\Phi^*=\kappa^* \Phi =\Phi\circ\kappa  - T_{(\partial_x \Phi) \circ\kappa }\kappa.
\end{equation}

 Then we have the following result.
\begin{prop}\label{EQu}
Let $s>2+ \mez$ and $I=[0,T].$ There exists a real valued function $g$ such that $\partial_x g \in \Sigma^0_{s-\frac{3}{2}}$ and the function  $u=T_{e^{ig}}\Phi^*$ satisfies the equation
\begin{equation}\label{Phif}
(\partial_{t}  +T_W\partial_x +i \la D_x\ra^\tdm )u=F,
\end{equation}
with $F\in L^{\infty}(I,H^{s}(\xR^d))$ and $W$ is defined by \eqref{eq.W}. 
\end{prop}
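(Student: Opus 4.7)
The plan is to reduce \eqref{PhiF} in two steps: first, apply Alinhac's paracomposition operator $\kappa^*$ to flatten the principal symbol $c(x)\L{\xi}^\tdm$ of $i\L{D_x}^\tq T_c \L{D_x}^\tq$ into the constant-coefficient symbol $\L{\xi}^\tdm$; second, conjugate the resulting equation by a paraproduct $T_{e^{ig}}$ to remove the order-$\mez$ subprincipal correction produced along the way. The diffeomorphism $\chi$ in \eqref{CHI} is tailored precisely to make the first step work.

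For Step~1, I will first compute the symbol of $\L{D_x}^\tq T_c \L{D_x}^\tq$ by Theorem~\ref{theo:sc}: since $c$ depends only on $x$ and $\L{\xi}^\tq$ only on $\xi$, this produces $c(x)\L{\xi}^\tdm$ plus a real Poisson-bracket contribution of order $\mez$, odd in $\xi$ (of the form $\tq(\partial_x c)\xi\L{\xi}^{-\mez}$ at leading order), plus operators of order $\le 0$. Next I apply $\kappa^*$: by formula \eqref{defi*} the new principal symbol is $a^{*}(\chi(x),\eta) = c(x)(\partial_x\chi(x))^\tdm |\eta|^\tdm$, and the choice $\partial_x\chi = c^{-2/3}$ yields exactly $|\eta|^\tdm$. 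For the transport term $T_V\partial_x$, differentiating $\chi\circ\kappa = \mathrm{id}$ in $t$ gives $\partial_t\kappa = -(\partial_t\chi\circ\kappa)/(\partial_x\chi\circ\kappa)$, from which the classical computation shows that $\partial_t + V\partial_x$ pulls back to $\partial_t + W\partial_y$ with $W$ as in \eqref{eq.W}; its paradifferential analogue follows by Theorems~\ref{theo:sc} and \ref{theo:paracomp2}. The outcome is that $\Phi^* = \kappa^*\Phi$ satisfies
$$
\partial_t\Phi^* + T_W\partial_x\Phi^* + i\L{D_x}^\tdm\Phi^* + T_b\Phi^* = F_1,
$$
where $b$ is a real symbol of order $\mez$, odd in $\xi$ at leading order, and $F_1 \in L^\infty(I, H^s(\xR))$.

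For Step~2, I will write the leading part of $b$ as $\beta(t,x)\,\sgn(\xi)|\xi|^\mez$ (real, since $b$ is real and odd in $\xi$) and define $g(t,x)$ as a real-valued primitive in $x$ of $\tfrac{2}{3}\beta(t,\cdot)$, so that $\partial_x g \in \Sigma^0_{s-\tdm}$. Setting $u = T_{e^{ig}}\Phi^*$, I apply $T_{e^{ig}}$ to the equation for $\Phi^*$ and commute it through each term using Theorem~\ref{theo:sc}. The essential calculation is that the Poisson bracket $\{e^{ig}, \L{\xi}^\tdm\}$ produces precisely the symbol $\tfrac{3}{2}(\partial_x g)\,\xi\L{\xi}^{-\mez}e^{ig}$, which by the defining relation of $g$ matches the leading part of $b\,e^{ig}$ and thus cancels $T_{e^{ig}}T_b\Phi^*$ at the $\mez$-order level. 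The commutators of $T_{e^{ig}}$ with $\partial_t$ and $T_W\partial_x$, along with the $t$- and $x$-derivatives of $g$, contribute only operators of order $\le 0$ with coefficients in $L^\infty(I, W^{s-\tdm,\infty})$ (via \eqref{chi_t}--\eqref{chi_tx}), which can be absorbed into a new remainder $F \in L^\infty(I, H^s(\xR))$.

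The main obstacle is the regularity bookkeeping. The coefficients $c, V$ and the diffeomorphism $\chi, \kappa$ have only Sobolev regularity tied to $s$, and I will rely systematically on the quantitative bounds \eqref{esti:quant1}, \eqref{esti:quant2}, the paracomposition theorem \ref{theo:paracomp2}, and the pointwise estimates \eqref{chi_x}--\eqref{chi_tx} to check that each error operator gains enough smoothness to map $L^\infty(I, H^s)$ into itself. A secondary subtlety worth flagging is that the subprincipal symbol $b$ must be odd in $\xi$ in order for a single real scalar function $g$ to kill it; this oddness is guaranteed by the symmetric paradifferential form $\L{D_x}^\tq T_c \L{D_x}^\tq$ of the original operator, and would fail for an operator written asymmetrically.
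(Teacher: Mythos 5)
Your proposal follows exactly the paper's two-step route: flatten the principal symbol to $|\xi|^{3/2}$ by Alinhac paracomposition with the tailored diffeomorphism $\chi$ (simultaneously converting $T_V\partial_x$ to $T_W\partial_x$), then conjugate by $T_{e^{ig}}$ so that $\{|\xi|^{3/2},g\}$ cancels the order-$\mez$ remainder. One small point: your attribution of the oddness of the subprincipal symbol to the symmetric writing $\L{D_x}^\tq T_c\L{D_x}^\tq$ is not quite right — the oddness comes from the principal symbol $c(x)|\xi|^{3/2}$ being even in $\xi$, so that a single $\partial_\xi$ in the Moyal bracket (or the $\alpha=2$ term in the paracomposition formula \eqref{defi*}, which carries one factor of $\eta$) automatically produces an odd contribution, and this would also hold for an asymmetric writing such as $T_{|\xi|^{3/2}}T_c$; the paper in any case sidesteps the question entirely by allowing $g$ to be a $\xi$-dependent real symbol with $\partial_x g\in\Sigma^0_{s-3/2}$.
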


\begin{proof}
We apply the operator $\kappa^*$ to the equation \eqref{PhiF}.
We first show that
\begin{equation}\label{estR}
\begin{aligned}
&\kappa^* (\partial_t +T_V \partial_x)\Phi = (\partial_t +T_{W} \partial_x)\Phi^* + R(\Phi)\\ 
&\Vert R(\Phi) \Vert_{L^\infty (I,H^s(\xR))} \leq  C( \Vert (\eta, \psi) \Vert_{L^\infty (I, H^{s+ \mez}(\xR) \times H^{s}(\xR)})\Vert \Phi \Vert_{L^\infty (I,H^s(\xR))}.
\end{aligned}
\end{equation}
We begin by showing that
\begin{equation}\label{estR1}
\kappa^* (\partial_t \Phi)= (\partial_t - T_{(\partial_t \chi) \circ \kappa}) \Phi^* + R_1(\Phi)
\end{equation}
where $R_1$ satisfies the estimate in \eqref{estR}.

We have
\begin{equation*}
\begin{aligned}
\kappa^*(\partial_t \Phi) &= (\partial_t \Phi) \circ \kappa - T_{(\partial_x \partial_t \Phi ) \circ \kappa} \kappa\\
&= \partial_t(\Phi \circ \kappa) - (\partial_t \kappa)(\partial_x\Phi\circ \kappa) -T_{(\partial_x \partial_t \Phi )\circ \kappa} \kappa,
\end{aligned}
\end{equation*}
therefore,
\begin{equation}\label{kappa*_1}
\begin{aligned}
\kappa^*(\partial_t \Phi)& = \partial_t (\kappa^* \Phi) + B_1 + B_2,\\
B_1& =T_{(\partial_x^2 \Phi \circ \kappa) \partial_t \kappa } \kappa\\
B_2&  =  T_{(\partial_x \Phi ) \circ \kappa} \partial_t\kappa  - (\partial_t \kappa)(\partial_x\Phi\circ \kappa)
\end{aligned}
\end{equation}
Let us consider the term $B_1$ in \eqref{kappa*_1} and let us set $a = \partial_t\kappa(\partial^2_x \Phi \circ \kappa).$ We have,
$$T_a \kappa = \sum_{j \geq 4} S_{j-3}(a) \Delta_j(\kappa) =  \sum_{j \geq 4}2^{-j} S_{j-3}(a) \widetilde{\phi}(2^{-j}D)(\partial_x\kappa) = \sum_{j \geq 4}Êg_j$$
where $\widetilde{\phi} \in C^\infty(\xR), \supp \widetilde{\phi} \subset \{ \mez \leq \vert \xi \vert \leq 2 \}.$ Since $\partial_x \kappa = 1+ f(\partial_x \eta)$ with $f(0) = 0,$ we have $ \widetilde{\Delta}_j(\partial_x\kappa)= \widetilde{\Delta}_j(f(\partial_x \eta))$ so,
$$\Vert g_j \Vert_{L^2(\xR)} \leq 2^{-j} \Vert a \Vert _{L^\infty(\xR)} 2^{-j(s-\mez)} c_j C(\Vert \eta \Vert_{H^{s + \mez}(\xR)}), \quad (c_j) \in l^2.$$
On the other hand  using  \eqref{chi_t}, \eqref{chi_x} we can write,
\begin{equation*}
\begin{aligned}
 \Vert a \Vert _{L^\infty(I \times \xR)} &\leq \Vert \Phi \Vert_{L^\infty(I, H^s(\xR))} \Vert \partial_t\chi (\partial_x \chi)^{-1}\Vert_{L^\infty(I \times \xR)}\\ 
 &\leq \Vert \Phi \Vert_{L^\infty(I,  H^s(\xR))} C(\Vert (\eta, \psi) \Vert_{L^\infty (I, H^{s+ \mez}(\xR) \times H^{s}(\xR)}).
 \end{aligned}
 \end{equation*}
 It follows that, 
 \begin{equation}\label{A1}
B_1 \leq  C(\Vert (\eta, \psi) \Vert_{L^\infty (I, H^{s+ \mez}(\xR) \times H^{s}(\xR)})  \Vert \Phi \Vert_{L^\infty(I,H^s(\xR))}.
\end{equation}
Let us consider the term $B_2.$

We have, $\partial_t \kappa =  ab$ where $ a=\partial_t \chi \in \Gamma ^0_1, b=\partial_x\kappa \in \Gamma ^0_1.$ It follows from Theorem \ref{theo:sc} that $a\#b = ab$ and $T_{ab} -T_aT_b$ is of order $-1$. Let us set
\begin{equation}
B_{21} = \Vert(T_{ab}-T_aT_b)(\partial_x \Phi\circ \kappa)\Vert_{L^\infty(I, H^s(\xR))}.
\end{equation} 
Using \eqref{esti:quant2} we obtain,
\begin{equation*}
B_{21} \leq \Vert \partial_t \chi\Vert _ {L^\infty(I, W^{1,\infty}(\xR))}
 \Vert \partial_x \kappa\Vert _ {L^\infty(I, W^{1,\infty}(\xR))}  \Vert (\partial_x  \Phi\circ \kappa)\Vert_{L^\infty(I, H^{s-1}(\xR))}.
\end{equation*}
Since $s- \frac{3}{2} > 1$, using \eqref{FKappa} with $\mu = s -1$ we obtain, 
\begin{equation}\label{A21}
B_{21} \leq C\big( \Vert \eta \Vert_{L^\infty(I, H^{s+\mez}(\xR))} \big )\Vert \Phi \Vert_{L^\infty(I, H^{s}(\xR))}.
\end{equation}
Therefore using \eqref{kappa*_1}, \eqref{A1}, \eqref{A21} we obtain,
\begin{equation}
\kappa^* (\partial_t \Phi)= \partial_t \kappa^* \Phi + T_{\partial_t \chi}  T_{\partial_x \kappa} \partial_x \Phi \circ \kappa + R_2(\Phi),
\end{equation}
where $R_2$ satisfies \eqref{estR}.

Now let us set 
$$a = \partial_x \kappa \in L^\infty(I, H^{s- \mez}(\xR)), b= \partial_x \Phi \circ \kappa \in  L^\infty(I, H^{s-1}(\xR)).$$
 It follows from \eqref{ab-} that
\begin{equation}
  \Vert ab - T_{a}b - T_{b}a \Vert_{L^\infty(I, H^{2s-2}(\xR))} \leq \Vert a \Vert_{L^\infty (I, H^{s- \mez}(\xR))}\Vert b \Vert_{ L^\infty(I, H^{s-1}(\xR))}.
\end{equation}
Therefore we obtain
\begin{equation}
\kappa^* (\partial_t \Phi)= \partial_t (\kappa^* \Phi) - T_{\partial_t \chi} \partial_x( \Phi \circ \kappa)+ T_{\partial_t \chi}T_{\partial_x\Phi \circ  \kappa}\partial_x \kappa + R_3,
\end{equation}
where $R_3$ satisfies \eqref{estR}.
Using \eqref{defkappa} we obtain
\begin{equation*}
\kappa^* (\partial_t \Phi)= (\partial_t  - T_{\partial_t \chi} \partial_x)((\kappa^* \Phi)- T_{\partial_t \chi} \partial_x (T_{\partial_x\Phi \circ  \kappa})+T_{\partial_t \chi}T_{\partial_x\Phi \circ \kappa} \partial_x \kappa + R_3,
\end{equation*}
where $R_3$ satisfies \eqref{estR}.

It follows that
\begin{equation*}
\kappa^* (\partial_t \Phi)= (\partial_t  - T_{\partial_t \chi} \partial_x)((\kappa^* \Phi)- T_{(\partial_x^2 \Phi \circ \kappa) \partial_x \kappa} \kappa  + R_3.
\end{equation*}
Now the term $ T_{(\partial_x^2 \Phi \circ \kappa) \partial_x \kappa} \kappa $ can be estimated exactly by the same method as the term $B_1$, therefore we obtain
\begin{equation*}
\kappa^* (\partial_t \Phi)= (\partial_t  - T_{\partial_t \chi} \partial_x)(\kappa^* \Phi)   + R_4,
\end{equation*}
where $R_4$ satisfies \eqref{estR}. This is precisely  \eqref{estR1}.

Now we claim that
\begin{equation}\label{TV}
\kappa^*(T_V\partial_x \Phi) = T_{(V \partial_x \chi) \circ \kappa} \partial_x\kappa^*  \Phi + R_5(\Phi),
\end{equation}
where $R_5$ satisfies \eqref{estR}.
But this is precisely a consequence of Theorem \ref{theo:paracomp2}. Indeed we have for (almost all) fixed t, $a(x,\xi) = iV(t,x)\xi \in \Sigma^1_{s-1},$ and the diffeomorphism $\kappa$ is in $W^{s-\frac{3}{2}}(\xR)$, so $\sigma = s-\frac{3}{2}$ and the remainder term is of order less than $1-(s-\frac{3}{2}) = \frac{5}{2} -s <0.$ Then \eqref{estR} follows from \eqref{estR1} and \eqref{TV}.

Let us consider now the principal part. Applying again  Theorem \ref{theo:paracomp2} we find that,
$$ \kappa^*(\vert D_x \vert^{\frac{3}{4}} T_c \vert D_x \vert^{\frac{3}{4}} \Phi) = \vert D_x \vert^{\frac{3}{2}}\kappa^* \Phi +T_a \kappa^* \Phi,$$
where $a$ is of order $\mez.$
  
Finally, it remains to reduce to the case where $a=0$. Indeed, let $g$ be a real-valued  symbol such that $\partial_x g \in \Gamma^{0}_{s-3/2}(\xR)$ and 
$$\{|\xi|^{3/2}, g\}= -a,$$
then 
if we set
\begin{equation}\label{d.u}
u=T_{e^{ig}}\Phi^*,
\end{equation}
we obtain by symbolic calculus that $u$ satisfies 
\begin{equation*}
(\partial_{t}  +T_W\partial_x +i \la D_x\ra^\tdm +iT_a + T_b)u=F,
\end{equation*}
with $F\in L^{\infty}(I,H^{s}(\xR^d))$ and $b = i\{|\xi|^{3/2}, g\}$. This completes the proof of Proposition \ref{EQu}.
\end{proof}
\subsubsection{Regularity of W}
 The following result gives some informations on the function $W$ defined in \eqref{eq.W}.
\begin{lemm}\label{defW}
 Let $I=[0,T]$, $E= L^\infty ( I\times \xR)$, $F= L^\infty ( I, H^{s-2} ( \xR)).$
\begin{enumerate}
\item  If $s>2+ \mez$, we have  $W\in E$, $\partial _x W \in F$, and
$$ \|W\|_E + \| \partial _xW \|_F \leq C (\|(\eta, \psi)\|_{L^\infty (I,  H^{s+ \mez} ( \xR) \times H^s ( \xR))}).
$$
\item If $s>4$, we have  $\partial _t W,  \partial_x^2 W, \partial_t \partial_x W \in E$ and
$$\|\partial _t W\|_E +\Vert \partial_x^2 W \Vert_E +\Vert \partial_t \partial_x W \Vert_E \leq C (\|(\eta, \psi)\|_{L^\infty (I, H^{s+ \mez} ( \xR) \times H^s ( \xR))}).
$$
\end{enumerate}
\end{lemm}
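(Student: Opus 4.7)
The plan is to exploit the structural identity $W = g \circ \kappa$ with $g = V \partial_x \chi + \partial_t \chi$, since $(V \circ \kappa)(\partial_x \chi \circ \kappa) = (V \partial_x \chi) \circ \kappa$. This reduces matters to regularity estimates on $g$ in the original coordinates, combined with the composition estimate \eqref{FKappa} and the bounds on the jets of $\kappa$ already established in \eqref{chi_t}--\eqref{kappaHs} and \eqref{chi_xx}--\eqref{chi_xtt}.

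For part (1), the $L^\infty$ bound on $W$ follows from $V \in L^\infty(I, H^{s-1}) \hookrightarrow L^\infty(I \times \xR)$ for $s > 3/2$: the formula $V = \partial_x \psi - \mathfrak{B} \partial_x \eta$ combined with Lemma~\ref{Geta} and the algebra property of $H^{s-1}$ controls $\mathfrak{B}$, and \eqref{chi_x}, \eqref{chi_t} take care of the factors from $\chi$. For the $H^{s-2}$ bound on $\partial_x W = (\partial_x g \circ \kappa)\,\partial_x \kappa$, expand $\partial_x g = (\partial_x V)(\partial_x \chi) + V(\partial_x^2 \chi) + \partial_x \partial_t \chi$. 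The first summand is in $L^\infty(I, H^{s-2})$ since $\partial_x V \in H^{s-2}$ and $\partial_x \chi - 1 \in H^{s-1/2}$ multiplies $H^{s-2}$; the second is in $H^{s-3/2} \subset H^{s-2}$ using the $L^\infty$ embedding of $H^{s-1}$ and \eqref{chi_xx} after composition; the third is obtained from \eqref{chi_tx}. The composition estimate \eqref{FKappa}, used with $\mu = s-2 \le s-1$, transfers the bound to $(\partial_x g)\circ \kappa$, and multiplication by $\partial_x \kappa \in 1 + L^\infty(I, H^{s-1/2})$ preserves $H^{s-2}$ (for $s > 5/2$).

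For part (2), applying $\partial_t$, $\partial_x^2$, and $\partial_t \partial_x$ to $W = g \circ \kappa$ by the chain rule produces expressions involving derivatives of $g$ composed with $\kappa$, multiplied by the jets $\partial_x \kappa$, $\partial_t \kappa$, $\partial_x^2 \kappa$, $\partial_t \partial_x \kappa$. Composition with $\kappa$ preserves pointwise $L^\infty$ bounds trivially, so it suffices to bound $\partial_x^2 g$, $\partial_t g$, $\partial_t \partial_x g$ and these jets in $L^\infty(I \times \xR)$. The jet bounds follow from differentiating \eqref{kappa_t}, \eqref{kappaHs} and invoking Sobolev embedding. The time-derivative terms in $g$ require estimating $\partial_t V$, which via the system \eqref{system} and identity \eqref{2.6} is expressed in terms of spatial derivatives of $\eta$, $\psi$, and $G(\eta)\psi$, together with the contribution $\partial_t^2 \chi$ controlled in $L^\infty$ by \eqref{chi_tt}; each such expression embeds in $L^\infty$ when $s > 4$.

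The main obstacle is $\partial_t \partial_x W$, which imposes the sharpest regularity demand: it requires $\partial_x \partial_t^2 \chi \in L^\infty$, which by \eqref{chi_xtt} is only known in $H^{s-7/2}$, and the embedding $H^{s-7/2} \hookrightarrow L^\infty$ in one space dimension holds precisely when $s - 7/2 > 1/2$, i.e.\ $s > 4$. This is the origin of the threshold in part (2), and all remaining contributions (including $\partial_x^2 g$, which involves $\partial_x^3 \chi$ of comparable regularity) are controlled by comparable or easier arguments under the same hypothesis.
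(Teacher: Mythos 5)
Your proposal is correct and follows essentially the same route as the paper: the reformulation $W = g\circ\kappa$ with $g = V\partial_x\chi + \partial_t\chi$ is a cosmetic repackaging of the chain-rule computation in the paper's proof, and both arguments rest on the identical ingredients — the $\chi$-estimates \eqref{chi_t}--\eqref{chi_xtt}, the composition bound \eqref{FKappa}, and Sobolev embedding — with the threshold $s>4$ forced by $\partial_x\partial_t^2\chi \in H^{s-7/2}$ and $\partial_t\partial_x V\in H^{s-7/2}$ exactly as in the paper. The only notable divergence is your direct $L^\infty$ bound on $\partial_x^2 g$, where the paper instead deduces $\partial_x^2 W\in L^\infty$ in one line from part~(1) via $\partial_x W\in L^\infty(I,H^{s-2})$ and the embedding $H^{s-3}\hookrightarrow L^\infty$, and your jet bounds (in particular $\partial_t\partial_x\kappa\in L^\infty$) are asserted rather than derived, though they do follow from the stated estimates.
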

\begin{proof}
 Let us recall that we have set,
\begin{equation}
A = C\big( \Vert (\eta, \psi) \Vert_{L^\infty (I, H^{s+ \mez}(\xR) \times H^{s}(\xR)} \big)
\end{equation}
where $C: \xR^+ \to \xR^+$  is an increasing function which may change from place to place.\\
Since $s > 2+\mez$ using \eqref{BV} we obtain,
\begin{equation}
\Vert V \Vert_E \leq \Vert \partial_x \psi \Vert_{L^\infty (I, H^{s-2}(\xR))} +\Vert \mathfrak{B}\Vert_{L^\infty (I, H^{s-2}(\xR))}\Vert \partial_x \eta \Vert_{L^\infty (I, H^{s-2}(\xR))} \leq A.
\end{equation}
Then the estimate $\Vert W\Vert_E \leq A$ follows fom \eqref{chi_t} and \eqref{chi_x} .

Now we have
\begin{equation}
\partial_{x}W = \partial_{x}V \circ \kappa+V \circ \kappa(\partial^{2}_{x} \chi \circ \kappa)\partial_{x} \kappa + (\partial_{t} \partial_{x} \chi \circ \kappa) \partial_{x} \kappa.
\end{equation}
Using \eqref{FKappa}  we see that,
\begin{equation}\label{D}
\Vert V \circ \kappa \Vert_F + \Vert \partial_xV \circ \kappa\Vert_F \leq A\Vert V\Vert_{L^\infty(I,H^{s-1}(\xR))} \leq A.
\end{equation}
Now using \eqref{kappaHs}, \eqref{chi_xx} and the fact that $H^{s-2}(\xR)$ is an algebra we deduce,
\begin{equation}
\Vert V \circ \kappa(\partial_x^2 \chi \circ \kappa)\partial_x \kappa \Vert_F \leq A.
\end{equation}
Then the estimate $\Vert \partial_x W \Vert_F \leq A$ follows from \eqref{chi_tx} and \eqref{kappaHs}.

Let us now prove $2.$ We have
\begin{equation}\label{G}
\begin{aligned}
\partial_t W &= \partial_t V \circ \kappa( \partial_x \chi \circ \kappa) +\partial_x V \circ \kappa(\partial_x \chi \circ \kappa) \partial_t \kappa + V \circ \kappa(\partial_t \partial_x \chi \circ \kappa) \\
&+V \circ \kappa( \partial^2_x \chi \circ \kappa) \partial_t \kappa -  \partial^2_t \chi  \circ \kappa - \partial_t \partial_x \chi  \circ \kappa(\partial_x \kappa) =: \sum_{i=1}^ 6B_i.
\end{aligned}
 \end{equation}

It follows from \eqref{FKappa},\eqref{chi_x},\eqref{kappa_t},\eqref{chi_xx},\eqref{chi_tx}, and the Sobolev embedding that
\begin{equation}\label{H}
\vert B_2 \vert + \vert B_3 \vert + \vert B_4 \vert + \vert B_6 \vert \leq A.
\end{equation}
Now we have $ \partial_t V = \partial_x \partial_t \psi - (\partial_t \mathfrak{B}) \partial_x \eta - \mathfrak {B} \partial_x \partial_t \eta.$ So using the equations satisfied by $(\eta, \psi)$, the Sobolev embedding and Lemma \ref{Geta} we obtain
\begin{equation}\label{I}
  \Vert \partial_t V \Vert_{L^\infty (I, H^{s- \frac{5}{2}}(\xR))} \leq A.
\end{equation}
It follows that
\begin{equation}\label{J}
   \vert B_1 \vert  \leq A.
\end{equation}
The term $B_5$ is estimated by $A$ using \eqref{chi_tt}. Therefore using  \eqref{H} and \eqref{J} we deduce that $\Vert \partial_tW \Vert_E \leq A.$

The claim on $\partial_x^2W$ follows from the first part of the Lemma  and the Sobolev embedding since $s > 3+ \mez.$ It remains to consider the quantity $\partial_t\partial_xW.$  We go back to \eqref{G}. The term $\partial_t V \circ \kappa(\partial_x \chi \circ \kappa)$ is bounded by $A$ in $L^\infty(I, H^{s-\frac{5}{2}}(\xR)).$ The third term $V \circ \kappa(\partial_t \partial_x \chi \circ \kappa)$ is bounded by $A$ in $L^\infty(I, H^{s-2}(\xR)).$ The term $\partial_t \partial_x \chi \circ \kappa( \partial_x \kappa) $ is bounded by $A$ in  $L^\infty(I, H^{s-2}(\xR)).$  Therefore  the $\partial_x$ derivative of these three terms are bounded by $A$ in $L^\infty(I, H^{s-\frac{7}{2}}(\xR)).$ By \eqref{chi_t} we have,
\begin{align*}
\Vert \partial_x V \circ \kappa(\partial_x \chi \circ \kappa)\partial_t \kappa \Vert_{L^\infty(I\times \xR)} &\leq A \Vert \partial_x V \circ \kappa(\partial_x \chi \circ \kappa)\Vert_{L^\infty(I\times \xR)}\\
&\leq A\Vert \partial_x V \circ \kappa(\partial_x \chi \circ \kappa) \Vert_{L^\infty(I, H^{s-2})} \leq A.
\end{align*}
 We can apply the same argument for the term $V \circ \kappa( \partial^2_x \chi \circ \kappa) \partial_t \kappa.$ Finally we bound the term $V \circ \kappa(\partial_x\partial^2_{tt}\chi \circ \kappa)$ in the space $L^\infty(I, H^{s-\frac{7}{2}}(\xR))$ by using \eqref{FKappa} and \eqref{chi_xtt}.This completes the proof of our Lemma.

\end{proof}
\subsection{Symbol Smoothing}
In this section we follow an idea of Smith~\cite{Sm} (see also Bahouri-Chemin~\cite{BaCh}), and we are going to smooth out the coefficients of the function $W$ with respect to $x$. As already mentioned, here is the main place where the idea of allowing loss in remainder terms enters.  
We define for $0<\delta \leq 1$,
$$
T_W^\delta = \sum_{j \geq 4} S_{[\delta (j-3)] }(W)\Delta_j,
$$

The key difference between $T_W$ and $T_W^\delta$ is made clear  below.

\begin{lemm} \label{lem.diff}
The operator $T_W-T_W^\delta$ is of order $-\delta (s- \frac 3 2 )$.
\end{lemm}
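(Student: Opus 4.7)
The plan is to write the difference of the two paraproducts as a telescoping sum of Littlewood--Paley pieces, exploit the Hölder regularity of $W$ furnished by Lemma \ref{defW}, and conclude by standard almost orthogonality.

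\textbf{Step 1: Hölder regularity of $W$.} By Lemma \ref{defW}, $W \in L^\infty(I\times\xR)$ and $\partial_x W \in L^\infty(I, H^{s-2}(\xR))$. Since $s>5/2$ we have $s-2>1/2$, so Sobolev embedding gives $\partial_x W \in L^\infty(I, W^{s-5/2,\infty}(\xR))$, whence
$$
W \in L^\infty\bigl(I, W^{s-3/2,\infty}(\xR)\bigr).
$$
In particular one has the dyadic decay
$$
\lA \Delta_k W \rA_{L^\infty} \le C\, 2^{-k(s-3/2)}
$$
uniformly in $t\in I$, for some constant $C$ depending only on the norm in Lemma \ref{defW}.

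\textbf{Step 2: Telescoping.} Write
$$
(T_W - T_W^\delta) f = \sum_{j\ge 4} \bigl(S_{j-3}(W) - S_{[\delta(j-3)]}(W)\bigr)\Delta_j f,
$$
and note that
$$
S_{j-3}(W) - S_{[\delta(j-3)]}(W) = \sum_{k=[\delta(j-3)]+1}^{j-3}\Delta_k W.
$$
Combining with Step 1 and summing the geometric series yields
$$
\lA S_{j-3}(W) - S_{[\delta(j-3)]}(W) \rA_{L^\infty} \le C\, 2^{-\delta(j-3)(s-3/2)} \lesssim 2^{-\delta j (s-3/2)}.
$$

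\textbf{Step 3: Almost orthogonality and conclusion.} For each $j$, the factor $S_{j-3}(W) - S_{[\delta(j-3)]}(W)$ is spectrally supported in $\{|\xi|\le 2^{j-2}\}$ while $\Delta_j f$ is supported in $\{2^{j-1}\le|\xi|\le 2^{j+1}\}$. Consequently the product has spectrum in an annulus of size $\sim 2^j$, so the family $\{(S_{j-3}(W)-S_{[\delta(j-3)]}(W))\Delta_j f\}_{j}$ is almost orthogonal in $L^2$ and one computes, for any $\mu\in\xR$,
\begin{align*}
\lA (T_W-T_W^\delta)f\rA_{H^{\mu+\delta(s-3/2)}}^{2}
&\lesssim \sum_{j} 2^{2j(\mu+\delta(s-3/2))}\, \lA (S_{j-3}(W)-S_{[\delta(j-3)]}(W))\Delta_j f\rA_{L^2}^{2}\\
&\lesssim \sum_{j} 2^{2j(\mu+\delta(s-3/2))}\, 2^{-2\delta j(s-3/2)}\, \lA\Delta_j f\rA_{L^2}^{2}\\
&\lesssim \lA f\rA_{H^{\mu}}^{2}.
\end{align*}
This shows $T_W-T_W^\delta$ is of order $\le -\delta(s-3/2)$, as claimed.

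The only mild difficulty lies in Step 1, namely in verifying that Lemma \ref{defW} actually delivers the Hölder regularity $W^{s-3/2,\infty}$ needed to estimate $\|\Delta_k W\|_{L^\infty}$; once that is in hand, Steps 2--3 are a standard Littlewood--Paley computation à la Bahouri--Chemin.
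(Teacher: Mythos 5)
Your proof is correct and follows essentially the same route as the paper: reduce the difference to $\sum_{j}\bigl(S_{j-3}W-S_{[\delta(j-3)]}W\bigr)\Delta_j$, bound $\lA S_{j-3}W-S_{[\delta(j-3)]}W\rA_{L^\infty}$ by a telescoping sum of $\lA\Delta_kW\rA_{L^\infty}\lesssim 2^{-k(s-3/2)}$ (which is exactly what the paper extracts from $\partial_xW\in H^{s-2}$), and conclude by the standard almost-orthogonality argument, which the paper leaves implicit. One small remark: rather than routing through the H\"older space $W^{s-3/2,\infty}$ (which technically requires $s-3/2\notin\xN$), it is cleaner to get the dyadic decay directly via Bernstein, $\lA\Delta_kW\rA_{L^\infty}\lesssim 2^{k/2}\lA\Delta_kW\rA_{L^2}\lesssim 2^{-k(s-3/2)}$, as the paper does.
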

\begin{proof}
%

Since for almost all fixed t we have, $\partial_x W(t, \cdot) \in H^{s-2}(\xR)$ we have,
\begin{align*}
\lA S_j (W)-S_{[\delta j]}(W)\rA_{L^\infty(\xR)} 
&\le  \sum_{n=[\delta j]}^j \lA \Delta_n (W)\rA_{L^\infty(\xR)}\\
&\le 
K \sum_{n=[\delta j]}^j   2^{-n(s-\tdm)}\le K 2^{-\delta j(s-\tdm)}.
\end{align*}
\end{proof}
In the sequel we shall set
\begin{equation}\label{hWa}
\left\{
\begin{aligned}
h &= 2^{-j}, \, j \in \xN, \\
W_{h}^ \delta &= S_{[\delta(j-3)]}(W),\\
a(\xi) &= \chi_0(\xi) \vert \xi \vert^{\frac{3}{2}},
\end{aligned}
\right.
\end{equation}
where $\chi_0 \in C^\infty_0(\xR), \supp \chi_0 \subset \{ \frac{1}{ 4} \leq \vert \xi \vert \leq 4\},  \chi_0 =1$ in $ \{\frac{1}{ 2} \leq \vert \xi \vert \leq 2\}.$
\begin{lemm}\label{lem.3.5}
 Let $s>2 + \frac 1 2$.   
There exist $\delta<\frac 1 2$, $\epsilon >0$ and $f_h\in L^{\infty}(I,H^{s+\epsilon- \mez }(\xR))$ 
such that 
\begin{equation}\label{eq.estj1}
\begin{gathered}
\|f_h\|_{ L^{\infty}(I,H^{s+\epsilon- \mez}(\xR))} 
\leq C \left(\|(\eta, \psi)\|_{L^\infty (I,H^{s+ \frac 1 2}(\xR)\times H^{s}(\xR))}\right),\\
 \supp (\widehat{f_h})\subset \{\frac1 2 h^{-1}\leq \vert \xi\vert \leq 2 h^{-1}\}
 \end{gathered}
 \end{equation}
and the functions  $u_{h}= \Delta_j u$ satisfy 
\begin{equation}\label{Phifbis}
(\partial_{t}  +\frac1 2 (W^\delta_{h} \partial_x + \partial_x W^\delta_{h}) 
+i a(D_x) )u_{h} =f_h
\end{equation}
\end{lemm}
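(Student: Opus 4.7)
The plan is to dyadically decompose the equation provided by Proposition~\ref{EQu} and replace each of its ingredients (the paraproduct $T_W$, the principal Fourier multiplier $\la D_x\ra^{3/2}$, and the asymmetric transport $T_W \partial_x$) by a smoothed, symmetric, spectrally localized counterpart, keeping all remainders in the space $L^\infty(I, H^{s+\epsilon - 1/2}(\xR))$.

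\emph{Step 1 (Dyadic block decomposition).} Apply $\Delta_j$ to $(\partial_t + T_W \partial_x + i\la D_x\ra^{3/2})u = F$. Since $\Delta_j$ commutes with $\partial_t$ and with Fourier multipliers, the equation becomes
\[
\partial_t u_h + T_W \partial_x u_h + i \la D_x\ra^{3/2} u_h = \Delta_j F - [\Delta_j, T_W]\partial_x u.
\]
The bound $\|W\|_{L^\infty(I, W^{1,\infty})} \le A$ from Lemma~\ref{defW}(1), combined with Lemma~\ref{Tadelta}, controls the commutator in $L^\infty(I, H^{s})$, and together with $\Delta_j F \in L^\infty(I, H^s)$ it yields a first contribution to $f_h$, with spectrum already of the right dyadic type.

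\emph{Step 2 (Coefficient smoothing).} By Lemma~\ref{lem.diff}, the operator $T_W - T_W^\delta$ is of order $-\delta(s - 3/2)$. Applied to $\partial_x u_h$ (uniformly bounded in $H^{s-1}$), it contributes a term in $H^{s-1+\delta(s-3/2)}$. Since $s > 5/2$ forces $s - 3/2 > 1$, we may pick $\delta < 1/2$ with $\delta(s - 3/2) > 1/2$; setting $\epsilon := \delta(s-3/2) - 1/2 > 0$, the error is placed into $L^\infty(I, H^{s + \epsilon - 1/2})$ with the required bound.

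\emph{Step 3 (Reduction to multiplication by $W_h^\delta$ and symmetrization).} The spectral support of $u_h$ in $\{2^{j-1}\le |\xi|\le 2^{j+1}\}$ reduces $T_{W^\delta}\partial_x u_h = \sum_k S_{[\delta(k-3)]}(W)\Delta_k \partial_x u_h$ to its blocks with $|k - j| \le 1$; the block $k=j$ gives the principal contribution $W_h^\delta \partial_x u_h$, while the neighboring blocks produce differences $\bigl(S_{[\delta(k-3)]}(W) - W_h^\delta\bigr)\Delta_k \partial_x u_h$ controlled by $\|\partial_x W\|_{L^\infty(I,H^{s-2})} \le A$ from Lemma~\ref{defW}. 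The identity
\[
W_h^\delta \partial_x u_h = \tfrac{1}{2}\bigl(W_h^\delta \partial_x + \partial_x W_h^\delta\bigr) u_h - \tfrac{1}{2}(\partial_x W_h^\delta) u_h
\]
then yields the desired symmetric form at the price of the multiplicative remainder $\tfrac{1}{2}(\partial_x W_h^\delta) u_h$, which lies in $L^\infty(I, H^{s})$ by the same estimate on $W$.

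\emph{Step 4 (Principal symbol replacement and conclusion).} On the frequency support of $u_h$, $\la D_x\ra^{3/2}$ and $a(D_x)$ agree up to a smoothing remainder (with the natural semi-classical identification that makes $\chi_0$ equal $1$ on the dyadic annulus of $u_h$); this error is absorbed into $f_h$. Summing all remainders from Steps 1--4 and applying, if needed, a fixed spectral cutoff to enforce exact localization, one obtains $f_h$ supported in $\{\tfrac{1}{2}h^{-1}\le |\xi|\le 2h^{-1}\}$ and bounded in $L^\infty(I, H^{s+\epsilon-1/2})$ as required. The main obstacle is the arithmetic of Step 2: the conditions $\delta < 1/2$ and $\delta(s - 3/2) > 1/2$ are simultaneously achievable exactly at the threshold $s > 5/2$, which matches the hypotheses of the lemma and of Theorem~\ref{theo:main}.
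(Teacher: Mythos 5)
Your proof is correct and follows essentially the same route as the paper: apply $\Delta_j$ and control the commutator $[\Delta_j, T_W]\partial_x u$ via Lemma~\ref{Tadelta}, replace $T_W$ by $T_W^\delta$ and use Lemma~\ref{lem.diff} with the arithmetic $\delta<\tfrac12$, $\epsilon=\delta(s-\tfrac32)-\tfrac12>0$ (viable precisely when $s>\tfrac52$), exploit the near-diagonal block structure to pass from $T_W^\delta\partial_x u_h$ to $W_h^\delta\partial_x u_h$, and finally symmetrize, absorbing $\tfrac12(\partial_x W_h^\delta)u_h$ into the remainder. The only superficial imprecision is in Step~4, where $\la D_x\ra^{3/2}$ and $a(D_x)$ agree \emph{exactly} on the spectrum of $u_h$ (in the appropriate dyadic/semiclassical normalization), rather than up to a smoothing error, but this does not affect the argument.
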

\begin{proof}
First of all we remark that we have $\la D_x\ra^\tdm u_{h} = a(D_x)u_h.$ Now, applying the operator $\Delta_j$ to~\eqref{Phif}, we obtain
\begin{equation}\label{Phifter}
(\partial_{t}  +T_W \partial_x  +i \la D_x\ra^\tdm) u_{h} =\Delta_j f - [\Delta_j, T_W] \partial_xu := g_h^1
\end{equation}
 Since by Lemma  \ref{defW} we have $\partial_x W \in L^\infty(I, H^{s-2}(\xR))$, it follows from Lemma \ref{Tadelta} that $g_h^1$  satisfies~\eqref{eq.estj1} (for any $0<\epsilon \leq \mez$).
Then we can replace $T_W$ by $T^\delta_W$, which gives
\begin{equation}\label{Phifquar}
(\partial_{t}  +\sum_{|k-j|\leq 1}S_{[\delta(k-3)]}(W)\Delta _k \partial_x  +i \la D_x\ra^\tdm) u_{h} 
=g_h^1+ (T_W^\delta - T_W)\partial _x u_h := g_h^1 + g_h ^2\end{equation}
where, according to Lemma~\ref{lem.diff},  $g_h^2$ satisfies~\eqref{eq.estj1} 
with $\epsilon = \delta (s-\frac 3 2)- \mez>0$ if $\delta<\mez$ is chosen 
close enough to $\mez$.  Now, we have
$$
S_{[\delta(j-3)]}(W) \partial_x u_{h} = \sum_{|k-j| \leq 1} 
S_{[\delta(j-3)]}(W)\Delta _k  \partial_x u_{h}.
$$ 
Consequently, we obtain
\begin{multline}
(\partial_{t}  +S_{[\delta(j-3)]}(W) \partial_x   +i \la D_x\ra^\tdm) u_{h} 
\\=g_h^1+g_h^2+ \sum_{|k-j| \leq 1}
(S_{[\delta(j-3)]}(W)-S_{[\delta(k-3)]}(W)) \Delta _k \partial_x u_{h}
=g_h^1+g_h^2+g_h^3,
\end{multline}
and  using that for $|k-j|\leq 1$,
$$
\lA S_{[\delta(k-3)]}(W)-S_{[\delta(j-3)]}(W)\rA_{L^\infty} \leq C 2^{-j\delta(s- \frac 3 2)},
$$
we obtain that $g_h^3$ satisfies~\eqref{eq.estj1}.  Finally, we obtain
$$ (\partial_{t}  +\mez( W^\delta_{h} \partial_x + \partial _x W^\delta_{h} )   +i \la D_x\ra^\tdm) u_{h} 
\\=g_h^1+g_h^2+ g_h^3+ g_h^4,
$$
where $g_h^4 = \mez S_{[\delta (j-3)]} ( \partial_x W) u_{h}$ satisfies~\eqref{eq.estj1} 
(for any  $0<\epsilon \leq \mez$).
\end{proof}
\begin{lemm}\label{lem.3.6}
Let $s>\frac{11}{2}$ and set    
$$\delta = \frac{1}{s - \frac 3 2} \in ]0, \frac 1 4 [.$$ Then there exists $f_h\in L^{\infty}(I,H^{s}(\xR))$ 
such that 
\begin{equation}\label{eq.estj2}
\begin{gathered}
\|f_h\|_{ L^{\infty}(I,H^{s}(\xR))} 
\leq C \left(\|(\eta, \psi)\|_{L^\infty (I,H^{s+ \frac 1 2}(\xR)\times H^{s}(\xR))}\right),\\
 \supp (\widehat{f_h})\subset \{\frac1 2 h^{-1}\leq \vert \xi\vert \leq 2h^{-1} \}
 \end{gathered}
 \end{equation}
and the functions  $u_{h}= \Delta_j u$ satisfy 
\begin{equation}
(\partial_{t}  +\frac1 2 (W^\delta_{h} \partial_x + \partial_x W^\delta_{h}) 
+i a(D_x)u_{h} =f_h
\end{equation}
\end{lemm}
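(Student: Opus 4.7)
My plan is to follow the same four-step blueprint used in the proof of Lemma \ref{lem.3.5}, specialized to the choice $\delta = 1/(s - \frac{3}{2}) \in (0, 1/4)$, which gives the sharp algebraic identity $\delta(s-\frac{3}{2}) = 1$. This exact balance lets every remainder coming from the symbol smoothing be controlled with a full one-derivative gain, so the inhomogeneity $f_h$ ends up in $H^s$ rather than just $H^{s+\eps-1/2}$ as in Lemma \ref{lem.3.5}. The threshold $s>11/2$ is dictated by requiring $\delta<1/4$ (needed later for the long-time parametrix construction).

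Concretely, I would first apply $\Delta_j$ to the equation $(\partial_t + T_W\partial_x + i\la D_x\ra^{\tdm})u = F$ of Proposition \ref{EQu}, using that $u_h = \Delta_j u$ is frequency-localized near $|\xi|\sim 2^j$ (so $\la D_x\ra^{\tdm} u_h = a(D_x)u_h$). This produces a first remainder $g_h^1 = \Delta_j F - [\Delta_j, T_W]\partial_x u$; the commutator is bounded in $H^s$ by Lemma \ref{Tadelta} with $\sigma = s-1$, since $W \in L^\infty(I, W^{1,\infty}(\xR))$ by Lemma \ref{defW} combined with the Sobolev embedding $H^{s-2}\hookrightarrow L^\infty$.

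Next I would swap $T_W$ for $T_W^\delta$. The difference $g_h^2 = (T_W^\delta - T_W)\partial_x u_h$ is, by Lemma \ref{lem.diff}, an operator of order $-\delta(s-\frac{3}{2}) = -1$ applied to $\partial_x u_h$, so it lies in $H^s$. Then I would decompose $S_{[\delta(j-3)]}(W)\partial_x u_h = \sum_{|k-j|\le 1} S_{[\delta(j-3)]}(W)\Delta_k\partial_x u_h$ and substitute $[\delta(k-3)]$ for $[\delta(j-3)]$, picking up a third remainder $g_h^3$ bounded by the $L^\infty$ gap $\|S_{[\delta(k-3)]}(W)-S_{[\delta(j-3)]}(W)\|_{L^\infty} \lesssim 2^{-j\delta(s-3/2)} = 2^{-j}$, which again gives an $H^s$ estimate. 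Finally, the symmetrization
$$
S_{[\delta(j-3)]}(W)\partial_x u_h = \frac{1}{2}\bigl(W_h^\delta\partial_x + \partial_x W_h^\delta\bigr)u_h - \frac{1}{2}S_{[\delta(j-3)]}(\partial_x W)u_h
$$
contributes the last remainder $g_h^4$, estimated in $H^s$ using $\|\partial_x W\|_{L^\infty}\le A$. Setting $f_h \defn g_h^1 + g_h^2 + g_h^3 + g_h^4$ concludes, with each piece supported (up to the standard fattening of the spectral band) near $|\xi|\sim 2^j$.

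The delicate point is the bookkeeping for $g_h^2$ and $g_h^3$: both one-derivative gains hinge on the identity $\delta(s-\frac{3}{2}) = 1$, so $\delta$ must be chosen exactly as $1/(s-\frac{3}{2})$. The remainders $g_h^1$ and $g_h^4$ already lie in $H^s$ under the much weaker hypothesis $s>5/2$, so the real source of the threshold $s>11/2$ is the need to balance the symbol smoothing against the spectral localization at scale $2^j$, together with the additional constraint $\delta < 1/4$.
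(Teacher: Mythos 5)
Your proposal is correct and follows exactly the paper's strategy: the paper's own proof of this lemma simply says it is identical to that of Lemma~\ref{lem.3.5} with $\delta$ now chosen so that $\delta(s-\frac{3}{2})=1$, and you have faithfully reconstructed the four remainder terms $g_h^1,\dots,g_h^4$ from that proof and verified that the sharp identity $\delta(s-\frac{3}{2})=1$ upgrades the $H^{s+\eps-\frac12}$ bounds on $g_h^2,g_h^3$ to full $H^s$ bounds. You also correctly identify that the hypothesis $s>\frac{11}{2}$ (equivalently $\delta<\frac14$) is not needed for this lemma's estimates per se but is imposed for the long-time parametrix of Section~5.
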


 \begin{proof} 
The proof is identical to that of Lemma~\ref{lem.3.5}, the only difference beeing that now we  take $\delta $ such that $\delta ( s- \frac 3 2 ) = 1$.
\end{proof}

\section{Semi-classical parametrix}\label{sec.semiclass}
Following \cite{BGT1} we shall reduce the analysis to establishing semi-classical estimates. 

Recall that  $2^{-j} =h$ and $W_h^\delta=S_{[\delta(j-3)]}(W) = \gamma(h^\delta D_x)W$, $\gamma \in C_0^\infty(\xR).$ 
\begin{theo}\label{DISP}
 Let $\chi \in C_0^\infty (\xR ^)$ with $\supp \chi \subset \{ \xi : \frac{1}{2} \leq \vert \xi \vert \leq 2 \}$ and $t_0 \in \xR.$  For any initial data $u_{0,h}= \chi(hD_x) u_0$,  where $u_0 \in L^1(\xR),$ let  
 $U_h := S(t,t_0,h) u_{0,h}  $ be  the solution of
\begin{equation}\label{Phifquint}
\partial_{t}U_h   +\frac1 2 (W^\delta_h \partial_x + \partial_x W^\delta_h) U_h  
+i a(D_x) U_h =0, \qquad U_h \mid_{t=t_0} = u_{0,h}.
\end{equation}
 Then there exists $\varepsilon>0$ such that for any $0<h \leq1$ and any $|t-t_0|\leq  h^{\mez- \varepsilon}$,
\begin{equation}
\label{eq.dispersion1}
\|S(t,t_0,h) u_{0,h}\|_{L^\infty(\xR)} \leq \frac C {h^{1/4} |t-t_0|^{1/2}} \|u_{0,h}\|_{L^1(\xR)}.
\end{equation} 
\end{theo}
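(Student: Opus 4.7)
The plan is to build a WKB-type parametrix for \eqref{Phifquint} on the semiclassical time interval $|t-t_0|\le h^{\mez-\varepsilon}$, and then read off the dispersion estimate by stationary phase in the frequency variable. Since $u_{0,h}$ is localized at frequency $|\xi|\sim h^{-1}$ and the principal symbol $a$ is of order $\tdm$, the natural semiclassical time scale is $h^{\mez}$ (the group velocity scales like $|\xi|^{1/2}\sim h^{-1/2}$, so a time $h^{\mez-\varepsilon}$ corresponds to an essentially macroscopic propagation).

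\textbf{WKB ansatz.} I would look for an approximate fundamental solution of the form
\begin{equation*}
V_h(t,x) = \frac{1}{2\pi}\int e^{i(\varphi(t,x,\xi)-y\xi)}\, b(t,x,\xi,h)\,\chi(h\xi)\,u_{0,h}(y)\,dy\,d\xi,
\end{equation*}
with initial data $\varphi(t_0,x,\xi)=x\xi$ and $b(t_0,x,\xi,h)=1$. Plugging into \eqref{Phifquint} and cancelling the principal term yields the Hamilton--Jacobi equation
\begin{equation*}
\partial_t\varphi + W_h^\delta(t,x)\partial_x\varphi + |\partial_x\varphi|^{\tdm}=0
\end{equation*}
(on the support of $\chi(h\xi)$), together with a linear transport equation for $b$ along the bicharacteristics of the Hamiltonian $H(t,x,\xi)=W_h^\delta(t,x)\xi+|\xi|^{\tdm}$. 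I would solve both by the method of characteristics, using Lemma~\ref{defW} for the low-order $(t,x)$-regularity of $W$ and the symbol-smoothing bounds (derivatives of $W_h^\delta$ in $x$ lose at worst $h^{-\delta}$ per derivative) to control the phase and amplitude on $|t-t_0|\le h^{\mez-\varepsilon}$. The time restriction is exactly what guarantees that $\partial_\xi\partial_x^k\varphi$ and $\partial_x^k b$ remain admissible symbols (higher $x$-losses stay subdominant compared to the leading term $(t-t_0)|\xi|^{\tdm}$).

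\textbf{Consistency with the exact flow.} I would estimate the residual $r_h:=(\partial_t+\tfrac12(W_h^\delta\partial_x+\partial_x W_h^\delta)+i a(D_x))V_h$ in $L^\infty_tL^2_x$, iterating the ansatz to order $N$ to gain as many powers of $h$ as needed. Combined with an $L^2$ energy estimate for \eqref{Phifquint} (the convection part is in skew-adjoint form, so the $L^2$ growth is controlled by $\|\partial_x W_h^\delta\|_{L^\infty}$, itself bounded by Lemma~\ref{defW} up to a mild $h^{-\delta}$), Duhamel then gives $\|S(t,t_0,h)u_{0,h}-V_h\|_{L^2}=O(h^N)\|u_{0,h}\|_{L^2}$. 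A crude Bernstein inequality converts the $L^2$ error into a contribution to $L^\infty$ which is much better than the target $h^{-1/4}|t-t_0|^{-1/2}$.

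\textbf{Stationary phase.} Setting $\xi=\eta/h$ with $|\eta|\sim 1$, the kernel of $V_h$ becomes
\begin{equation*}
K_h(t,t_0,x,y) = \frac{1}{2\pi h}\int e^{i\Phi_h(\eta)/h}\,\tilde b(t,x,\eta,h)\,\tilde\chi(\eta)\,d\eta,
\end{equation*}
with $\Phi_h(\eta)=h\,\varphi(t,x,\eta/h)-y\eta$. A direct computation from the eikonal equation gives $\partial_\eta^2\Phi_h(\eta) = (t-t_0)\,h^{1/2}\,\tfrac{3}{4}|\eta|^{-1/2}+O((t-t_0)^2 h^{1/2-c\delta})$, which is non-degenerate and of size $\sim (t-t_0)h^{1/2}$ on the range of interest. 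One-dimensional stationary phase in $\eta$ (with constant sign of $\partial_\eta^2\Phi_h$) yields
\begin{equation*}
|K_h(t,t_0,x,y)|\le \frac{C}{h}\cdot\Big(\frac{h}{(t-t_0)h^{1/2}}\Big)^{\!1/2} = \frac{C}{h^{1/4}|t-t_0|^{1/2}},
\end{equation*}
from which \eqref{eq.dispersion1} follows by Schur's test.

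\textbf{Main obstacle.} The delicate point is reconciling the semiclassical time window with the poor $x$-regularity of $W_h^\delta$: each $x$-derivative of $W_h^\delta$ costs a factor $h^{-\delta}$, and these losses propagate into derivatives of $\varphi$ and $b$. One must verify, with uniform bookkeeping in $h$, that all the derivatives entering the stationary phase estimate (non-degeneracy of $\partial_\eta^2\Phi_h$, boundedness of $\partial_\eta^k\Phi_h$ for $k\ge 3$, amplitude symbol estimates) remain in the regime where classical stationary phase applies. This forces $\varepsilon>0$ to be taken small (depending on $\delta$), and is precisely why the statement stops at $|t-t_0|\le h^{\mez-\varepsilon}$ rather than the full semiclassical horizon $|t-t_0|\le c\,h^{\mez}$.
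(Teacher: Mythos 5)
Your overall architecture matches the paper's (WKB parametrix on the semiclassical time scale $\sigma=t h^{-1/2}$, Duhamel plus an $L^2$-conservation argument for the error, dispersion by a frequency-integral estimate). Two remarks and one genuine gap.

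First, a structural difference: for this small-time theorem the paper does \emph{not} solve the nonlinear Hamilton--Jacobi equation $\partial_t\varphi+W_h^\delta\partial_x\varphi+|\partial_x\varphi|^{3/2}=0$ that you propose. It writes $\Phi=x\xi-h^{-1/2}ta(\xi)+h^{1/2}\Psi$ and uses the \emph{linear} transport problem $\partial_\sigma\psi+a'(\xi)\partial_x\psi=-\xi V_h$, $\psi|_{\sigma=0}=0$ (linear geometric optics). The nonlinear eikonal is only brought in for the large-time parametrix of Theorem~\ref{theo:classic}. Your version is not wrong, but it needlessly front-loads the quasi-linear analysis that the paper defers. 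Second, the paper's amplitude carries a $z$-dependent cutoff $\zeta\bigl(x-z-\sigma a'(\xi)\bigr)$, which is essential for turning the error terms $r_1,\dots,r_4$ into rapidly decreasing $O(h^N)$ contributions (by integration by parts with respect to $\xi$ and $\eta$); your ansatz has no analogue and the residual estimate would not close without it.

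The genuine gap is in the ``stationary phase'' step. You invoke classical one-dimensional stationary phase with $\partial_\eta^2\Phi_h\neq 0$ and acknowledge, almost in passing, that one must check the derivative losses remain ``in the regime where classical stationary phase applies.'' They do not. The amplitude $b$ produced by the transport equations lies in $S^0_{\delta+3\eps}$: each $\xi$-derivative costs a factor $h^{-(\delta+3\eps)}$. A single Van der Corput application with second-derivative lower bound $\partial_\xi^2\theta\gtrsim\sigma$ gives
$$\Big|\int e^{\frac{i}{h}\theta}\,q\,d\xi\Big|\le C\Bigl(\frac{h}{\sigma}\Bigr)^{1/2}\bigl(\|q\|_\infty+\|\partial_\xi q\|_{L^1}\bigr)\le C\Bigl(\frac{h}{\sigma}\Bigr)^{1/2}h^{-(\delta+3\eps)},$$
which is short of the target $(h/\sigma)^{1/2}$ by exactly the factor $h^{-(\delta+3\eps)}$. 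The paper therefore cannot use the classical lemma and instead proves the iterated Van der Corput estimate of Lemma~\ref{estI2}: it splits the $\xi$-integral into a neighborhood of the (unique) critical point of width $(h/\sigma)^{1/2}$ (estimated trivially) and its complement, and on the latter iterates the integration by parts $k$ times, each step gaining a net factor $h^{\mu_0-4\eps}$ despite the $h^{-(\delta+4\eps)}$ loss from differentiating $q$. Choosing $k$ with $k\mu_0\ge 1/2$ lets the iteration terminate at $(h/\sigma)^{1/2}$. Without this iteration (or an equivalent device), your stationary phase step gives a bound that is too weak by a polynomial factor of $h^{-1}$, and the dispersion estimate does not follow.
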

To prove this result, we shall follow a very classical trend and construct a parametrix. Notice that our assumptions being time-translation invariant  we can assume $t_0=0$. The parametrix will take the following form,
\begin{equation}\label{parametrix}
\widetilde{U}_h(t,x)= \frac 1 {2\pi h} \iint e^{\frac i h (\Phi(t,x,\xi,h)- z\xi)} \widetilde {B}(t,x,z,\xi,h) u_{0,h}(z) dz d\xi,
\end{equation}
where $\Phi$  will satisfy the eikonal equation and
\begin{equation}\label{amplitude}
 \widetilde {B}(t,x,z,\xi,h) = B(t,x,\xi,h) \zeta(x-z-th^{- \mez}a'(\xi)),
 \end{equation}
where $B$ will satisfy the transport equations and $ \zeta \in C_0^\infty (\xR)$, $\zeta(s) = 1$ if $\vert s \vert \leq 1$, $\zeta (s) = 0$ if $\vert s \vert \geq 2.$

 In addition to $\chi$ we shall use two more cut-off functions $\chi_j \in C_0^\infty (\xR), j = 1,2$,  such that
 \begin{equation}\label{chi_0chi_1}
 \left\{
 \begin{aligned}
  &\supp \chi_1 \subset \{ \xi : \frac{1}{3} \leq \vert \xi \vert \leq 3 \}, \quad \chi_1 = 1\,  \text{on}\,   \supp  \chi, \\
  &\supp \chi_0 \subset \{ \xi : \frac{1}{4} \leq \vert \xi \vert \leq 4 \}, \quad\chi_0 = 1 \,\text{on} \, \supp \chi_1.
  \end{aligned}
  \right.
\end{equation}

\subsection{The eikonal and transport equations}
 We introduce some space of symbols in which we shall solve our equations.
\begin{defi}\label{def.symb}
For  small $\eps,h_0$  to be fixed, we introduce the sets
 \begin{equation*}
 \begin{aligned}
 \Omega_ \eps&=\left\{(t,x,\xi,h)\in \xR^4\, :\, h\in (0, h_0), |t| <  h^{\mez -\eps}, 1<\la \xi\ra < 3\right\},\\
 \mathcal{O}_ \eps&=\left\{(\s,x,\xi,h)\in \xR^4\, :\, h\in (0, h_0), |\s| <  h^{ -\eps}, 1<\la \xi\ra < 3\right\}.
\end{aligned}
\end{equation*}
If
$m\in \xR $ and $\varrho\in \mathbb{R}^+$,  we denote by $S^m_{\varrho, \eps}(\Omega_\eps)$ (resp.$ S^m_{\varrho, \eps}(  \mathcal{O}_ \eps)$) the set of all 
functions $f$ on $  \Omega_\eps$ which are $C^\infty$ with respect to 
$(t,x,\xi)$ (resp.$(\s,x,\xi)$) and satisfy the estimate
\begin{equation}\label{C0}
\la \partial_x^\alpha f(t,x,\xi,h)\ra (resp.
\la \partial_x^\alpha f(\s,x,\xi,h)\ra )
\le C_{\alpha}  h^{m-\varrho\alpha}, 
\end{equation}
for all $(t,x,\xi,h)\in \Omega_\eps$ (resp.$(\s,x,\xi,h)\in \mathcal{O}_\eps)$.
\end{defi}
\begin{rema}
$(i)$ If $f\in S^m_{{\varrho,\eps}}$, $g\in S^{m'}_{\varrho,\eps}$ then $fg\in S^{m+m'}_{\varrho,\eps}$; if $f\in S^m_{\varrho,\eps}, (m \geq 0)$ and $F\in C^\infty(\xC)$ then 
 $F(f)\in S^{m}_{{\varrho},\eps}$;
if $f\in S^m_{\varrho,\eps},(m \leq 0)$ and $F\in C^\infty_b(\xC)$ then 
 $F(f)\in S^{0}_{{\varrho}-m,\eps}$  . Let $f\in S^\mu_{\varrho,\eps}$, then 
$\partial_x f\in S^{\mu -{\varrho}}_{{\varrho,\eps}}$.  Moreover $S^m_{\varrho,\eps} \subset S^m_{{\varrho',\eps}}$ if ${\varrho} \geq {\varrho} '$

$(ii)$ Let $W$ be such that $\partial_x W\in H^{s-2}(\xR)$ with $s>2+\mez$ and set $W^\delt_h=\gamma (h^{\delt} D_x )W$ where 
$\gamma \in  {\mathcal S}(\xR)$. Then $\partial_x W^\delt_h\in S^0_{\delt, \eps}$.
\end{rema}

Let $\delta\in (0, \frac 1 2 )$. We fix
 \begin{equation}\label{mu0}
\mu_0 = \frac 1 2 \left( \frac 1 2 - \delta\right), \quad \eps \in (0, \frac {\mu_0} 5).
 \end{equation}

Until the end of this section, for the simplicity of notations we shall drop the index $\eps$ and denote by $S^m_\varrho(\Omega) $ the space $S^m_{\varrho, \eps}(\Omega_\eps)$. 
Finally we set,  
\begin{equation}\label{L}
\begin{aligned}
\mathcal{L}_0&= \partial_t + \frac 1 2 (W^\delta_h \partial_x +\partial_x W^\delta_h) + i\chi_0 (h D_x) \la D_x\ra^\tdm,\\
a(\xi)&=  \chi_0(\xi)\la \xi\ra^\tdm.
 \end{aligned}
\end{equation}
The main result of this section is the following.
\begin{prop}\label{eikampl}
There exist a phase $\Phi$ of the form 
$$\Phi(t,x,\xi,h)= x \xi - h^{-\mez}ta(\xi)+h^{\mez}\Psi(t,x,\xi,h)$$
 with $\partial_x \Psi \in S^{-\eps}_{\delta}(\Omega)$ and an amplitude $B  \in  S^{0}_{\delta +3\eps}(\Omega)$ such that, with $\widetilde B$ defined in \eqref{amplitude},
 \begin{equation}\label{Leiphi}
\mathcal{L}_0 \left( e^{\frac{i}{h}\Phi}\widetilde{B}  \right)= e^{\frac{i}{h}\Phi}R_h.
\end{equation}
    and for all $N\in \xN$ we have,
\begin{equation}\label{estReste}
\Big\Vert \iint e^{\frac{i}{h}(\Phi(t,x,\xi,h) - z\xi)}R_h(t,x,z,\xi,h)u_{0,h}(z)\,dz\, d\xi \Big\Vert_{ 
H^1(\xR_x)} \leq C_Nh^N \Vert u_{0,h} \Vert_{L^1(\xR)},
\end{equation}
for all t in $[0, h^{\mez-\eps}]$.
 \end{prop}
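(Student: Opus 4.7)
The plan is to build the parametrix by the standard WKB scheme: substitute the ansatz $e^{i\Phi/h}\widetilde{B}$ into $\mathcal{L}_0$, expand everything in powers of $h^{1/2}$ (the natural semi-classical parameter here), and cancel the successive orders by solving an eikonal equation for $\Psi$ and a transport equation for $B$. The crucial algebraic identity is a stationary-phase expansion
\[
e^{-i\Phi/h}\,\chi_0(hD_x)\la D_x\ra^\tdm(e^{i\Phi/h}B) = \chi_0(\partial_x\Phi)\la h^{-1}\partial_x\Phi\ra^\tdm B + \sum_{k\ge 1} h^{k/2}\Lambda_k(\Phi)B,
\]
where each $\Lambda_k$ is a linear differential operator in $x$ whose coefficients lie in symbol classes determined by $\Phi$. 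Combined with $\partial_t\Phi = -h^{-\mez}a(\xi)+h^{\mez}\partial_t\Psi$, $\partial_x\Phi = \xi+h^{\mez}\partial_x\Psi$ and the Taylor expansion $|\xi+h^{\mez}\partial_x\Psi|^\tdm = |\xi|^\tdm + h^{\mez}a'(\xi)\partial_x\Psi + O(h\la\partial_x\Psi\ra^2)$, this reduces $e^{-i\Phi/h}\mathcal{L}_0(e^{i\Phi/h}B)$ to a series whose leading $h^{-3/2}$ term vanishes automatically (due to the choice $-h^{-\mez}ta(\xi)$ in the phase).

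The next order, $h^{-1}$, yields the eikonal equation for $\Psi$:
\[
h^{\mez}\partial_t\Psi + \xi W^\delta_h + a'(\xi)\partial_x\Psi = O(h^{\mez}\la\partial_x\Psi\ra^2).
\]
Rescaling $s=h^{-\mez}t$ turns this into a genuine forced transport equation $\partial_s\Psi + a'(\xi)\partial_x\Psi = -\xi W^\delta_h(h^{\mez}s,x)$ on $|s|\le h^{-\eps}$, with $\Psi|_{s=0}=0$. Solve by characteristics. The loss budget from Lemma~\ref{defW} gives $\|\partial_x W^\delta_h\|_{L^\infty}\lesssim 1$ and $\|\partial_x^{k+1}W^\delta_h\|_{L^\infty}\lesssim h^{-k\delta}$; integrating over the interval of length $h^{-\eps}$ contributes one further $h^{-\eps}$, so $\partial_x\Psi\in S^{-\eps}_{\delta}(\Omega_\eps)$ as required.

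With $\Phi$ determined, the order-$h^{0}$ equation reads
\[
\partial_t B + h^{-\mez}a'(\xi)\partial_x B + \Big(\tfrac12\partial_x W^\delta_h + c_{\mathrm{sub}}\Big)B = 0,
\]
where $c_{\mathrm{sub}}$ is the subprincipal symbol produced by $\Lambda_1(\Phi)$ (involving $\partial_x\Psi$, $\partial_x^2\Phi$ and lower derivatives of $a$). The characteristic velocity is exactly $h^{-\mez}a'(\xi)$, which matches the cutoff $\zeta(x-z-th^{-\mez}a'(\xi))$ built into $\widetilde B$; the cutoff is therefore propagated by the flow and stays uniformly localized. Solve by integration along characteristics with initial datum $B|_{t=0}=\chi_1(\xi)$, which equals $1$ on $\supp\chi$. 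Tracking derivatives using $\partial_x\Psi\in S^{-\eps}_\delta$ and $\partial_x W^\delta_h\in S^0_\delta$, the cost of each $\partial_x$ on $B$ is $h^{-\delta-2\eps}$ after the long-time integration; hence $B\in S^0_{\delta+3\eps}(\Omega_\eps)$.

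For the remainder estimate \eqref{estReste}, each remaining term in the WKB expansion carries an extra factor $h^{1/2-\eps-\delta}$, positive since $\eps<\mu_0=\tfrac12(\tfrac12-\delta)$, by \eqref{mu0}. Iterating the construction with an expansion $B = B_0 + h^{1/2-\eps-\delta}B_1 + h^{2(1/2-\eps-\delta)}B_2 + \cdots$ and solving the cascading transport equations for $B_1,B_2,\ldots$, one drives the remainder $R_h$ below $h^N$ in any semi-classical norm, for any prescribed $N$. Finally, in the oscillatory integral the cutoff $\zeta$ restricts the $z$-integration to a region of fixed (compact) size, $\xi$ ranges over the compact set $\supp\chi_0$, and $R_h$ together with any number of $x$-derivatives is $O(h^N)$; Minkowski's inequality then yields the claimed $H^1_x$ bound controlled by $\|u_{0,h}\|_{L^1}$. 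The main obstacle throughout is the delicate bookkeeping: both $\Psi$ and $B$ bleed negative powers of $h$ under $x$-differentiation, and we must verify that this loss is always strictly dominated by the semi-classical gain $h^{1/2-\eps-\delta}$ per WKB step --- which is precisely why $\eps$ is forced to be small compared to $\mu_0$ in \eqref{mu0}.
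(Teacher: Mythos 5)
Your proposal follows the same overall WKB scheme as the paper: rescale to semi-classical time $\s=h^{-\mez}t$, solve a \emph{linear} eikonal equation $\partial_\s\psi+a'(\xi)\partial_x\psi=-\xi W^\delta_h$, then expand the amplitude in a cascade of transport equations, and push the remainder to $O(h^N)$. The reduction of the nonlinear eikonal relation to this linear transport equation, the rescaling, and the matching of the cutoff $\zeta(x-z-\s a'(\xi))$ to the group velocity $a'(\xi)$ of the constant-coefficient principal part are exactly the paper's choices.

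However, there is one genuine gap in your treatment of the transport equation. You write it as $\partial_t B+h^{-\mez}a'(\xi)\partial_x B+(\tfrac12\partial_x W^\delta_h+c_{\rm sub})B=0$ and suggest solving it by straightforward integration along characteristics. But the subprincipal coefficient produced at order $h$ is not $O(1)$: it is $if$ with $f=V_h\,\partial_x\psi+a''(\xi)(\partial_x\psi)^2$, and since $\partial_x\psi\in S^{-\eps}_\delta$ this is of size $h^{-2\eps}$. Integrating a coefficient of that size over the interval $|\s|\leq h^{-\eps}$ would produce an exponential factor of order $e^{Ch^{-3\eps}}$ --- which destroys the claimed bound $B\in S^{0}_{\delta+3\eps}$. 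What saves the day in the paper is that $f$ is \emph{real-valued} (since $V_h$, $a$ and $\psi$ are real), so the exponential $e^{-i\int f}$ is unimodular and Lemma~\ref{estimations} yields uniform bounds regardless of the size of $f$. This structural point is precisely what justifies solving the cascading transport equations \eqref{C17}--\eqref{C18} on the long interval $|\s|\le h^{-\eps}$, and it must be stated and used.

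A secondary issue: for the $H^1_x$ remainder estimate \eqref{estReste}, invoking Minkowski and compact support is not enough. The remainders $r_1,r_2$ from the stationary-phase expansion, together with the terms coming from derivatives of the cutoff $\zeta$, are oscillatory integrals that must be beaten down by integration by parts (non-stationary phase in $\eta$ for $r_1,r_2$, and in $\xi$ using that $|x-z-\s a'(\xi)|\ge 1$ on $\supp\zeta^{(j)}$, $j\ge 1$). Without these arguments, the bound $C_Nh^N\|u_{0,h}\|_{L^1}$ does not follow. The bookkeeping exponent you call $h^{1/2-\eps-\delta}$ per WKB step also differs from the paper's $h^{\mu_0}$, $\mu_0=\tfrac12(\tfrac12-\delta)$, but that is a cosmetic discrepancy; the two gaps above are the substantive ones.
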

\begin{proof}
 We set,
\begin{equation}\label{tsigma}
\begin{gathered}
t=h^\mez \s, \qquad \varphi(\s,x,\xi,h)=\Phi(\s h^\mez,x,\xi,h),\\
\widetilde{b}(\s,x,\xi,h)=\widetilde{B}(\s h^\mez,x,\xi,h), \qquad V_h(\s,x,h)=W^\delta_h (\s h^\mez,x,h),\\
\mathcal{ L} =    h \partial_\s +  \frac{1}{2}h^\mez \big( V_h  (h \partial_x) + h  \partial_x V_h\big)+ ia (h D_x).\\
  \end{gathered}
\end{equation}
Multiplying \eqref{Leiphi}   by $h^\tdm$ we see that it is equivalent to,
\begin{equation}\label{C2}
 \mathcal{L} \left( e^{\frac{i}{h}\varphi}\widetilde{b} \right)  = e^{\frac{i}{h}\varphi} r(\s,x,z,\xi,h),
\end{equation}
 and \eqref{estReste} becomes,
\begin{equation}\label{estireste}
\Big\Vert \iint e^{\frac{i}{h}(\varphi(\s,x,\xi,h) - z\xi)} r(\s,x,z,\xi,h)u_{0,h}(z)\,dz\, d\xi \Big\Vert_{H^1(\xR_x)} \leq C_N h^N \Vert u_{0,h} \Vert_{L^1(\xR)}.
\end{equation}
 In the proof of \eqref{C2}, $z, \xi, h$ will be considered as  parameters.

  We shall take $\varphi$ of the form
\begin{equation}\label{C3}
\varphi(\s,x,\xi,h)=x\xi - \s a(\xi)+ h^\mez\psi(\s,x,\xi,h),
\end{equation}
where $\psi$ is the solution of the problem
\begin{equation}\label{C4}
\left\{
\begin{aligned}
&\partial_\s\psi+ a'(\xi)\partial_x\psi=-\xi V_h,\\
&\psi\arrowvert_{\s =0}=0.
\end{aligned}
\right.
\end{equation}
  Differentiating \eqref{C4} with respect to  $x$ and $\xi$, using an induction on $k$  and the fact that $ \partial_x V_h \in S_\delta ^0 (\mathcal {O})$, we see easily that,
 \begin{equation}\label{psiksi}
\vert \partial_\xi^k \partial_x ^\alpha \psi (\s,x,\xi,h) \vert \leq C_{k \alpha} \vert \s \vert  h^{-k \eps}  h^{- \delta(\alpha +k - 1)^+}, 
\end{equation}
for every $(\s,x,\xi,h) \in \mathcal{O}$, where $a^+ = \sup(a, 0)$. It follows in particular that $\partial_x \psi\in S^{-\eps}_\delta (\mathcal {O})$, $\partial_\s \psi\in S^{-\eps}_\delta (\mathcal {O})$ .

Now, since $\widetilde{b} = b\,\zeta$ we have,
\begin{equation}\label{unpeudeL}
\begin{aligned}
 e^{-\frac{i}{h}\varphi}\big( h \partial_\s &+  \frac{ h^{\frac{3}{2}}}{2} ( V_h  \partial_x +\partial_x V_h)\big)( e^{\frac{i}{h}\varphi} \widetilde {b}) = i[ h^\mez \xi V_h - a(\xi) +h^\mez \partial_\s \psi + hV_h \partial_x \psi ] \widetilde{b} \\
 &+h [ \partial_ \s b + h^{\frac{1}{2}} V_h \partial_x b  + \frac{1}{2}h^{\frac{1}{2}}(\partial_x V_h) b ] \zeta+ h[-a'(\xi) + h^{\frac{1}{2}} V_h ] b\,\zeta' .
 \end{aligned}
 \end{equation}
On the other hand recall that we have for all $M\in \xN^*$ (see the appendix),
\begin{equation}\label{C6}
e^{-\frac{i}{h}\varphi}a(h D_x) \left( e^{\frac{i}{h}\varphi} \widetilde{b}\right)
=A+r_1+r_2,
\end{equation}
where
\begin{equation}\label{C7}
A=\sum_{k=0}^{M-1}\frac{h^k}{i^k k!} 
\partial_y^k  \left\{ (\partial_\xi^k a) \left( \rho(x,y)\right) \widetilde{b}(y)\right\} 
\Big\arrowvert_{y=x}.
 \end{equation}
with
\begin{equation}\label{C8}
\rho(x,y)=\int_0^1 \frac{\partial \varphi}{\partial x}(\s,\lambda x+(1-\lambda)y,\xi,h ) \, d\lambda,
\end{equation}
and the remainder $r_1,r_2$ are given by,  
\begin{equation}\label{C9}
r_1=c\, h^{M-1} \iiint_0^1 e^{\frac{i}{h}(x-y)\eta}\kappa_0(\eta) 
  (1-\lambda)^{M-1}\partial_y^M 
[a^{(M)}(\lambda \eta+\rho((x,y))\widetilde{b}(y)] d\lambda 
 dy d\eta
\end{equation}
and
\begin{equation}\label{C10}
r_2=\sum_{k=0}^{M-1} c_{k,M} h^{M+k} 
\iint_0^1 z^M \hat{\kappa}_0(z) 
 (1-\lambda)^{M-1}\partial_y^{M+k} [a^{(k)}(\theta)\widetilde{b}] \arrowvert_{y=x-\lambda hz}
  d\lambda dz,
\end{equation}
where $c_M,c_{k,M}\in \xC$, $\kappa_0\in C^\infty_0(\xR), \kappa_0 = 1$ in a neighborhood of the origin.
Now since 
$$\widetilde{b}(\s,x,z,\xi,h)= b(\s,x,\xi,h) \zeta(x-z-\s a'(\xi)),$$
writing for simplicity $b(y) = b(\s,y,\xi,h)$ and $\zeta = \zeta(x-z-\s a'(\xi))$ we have, 
\begin{equation}\label{Anouveau}
\left\{
\begin{aligned}
A &= (\sum_{k=0}^{M-1} A_k) \zeta  + r_3,\\
A_k&=\frac{h^k}{i^k k!}  \partial_y^k  \left\{ (\partial_\xi^k a) \left( \rho(x,y)\right)b(y)\right\}\arrowvert_{y=x},\\
r_3&= \sum_{k=1}^{M-1} \sum_{j=1}^{k} c_{jk} h^k  \partial_y^{k-j}  \left\{(\partial_\xi^k a) ( \rho(x,y))b(y)\right\}\arrowvert_{y=x}\zeta^{(j)}.
\end{aligned}
\right.
 \end{equation}

The term $A_0$ in \eqref{Anouveau} is equal to 
$a(\xi+h^\mez \partial_x \psi)b$. 
Then
\begin{equation}\label{C11}
A_0=\left[\sum_{j=0} ^2 \frac{1}{j!} a^{(j)}(\xi)(h^\mez \partial_x \psi)^j+
\frac{ (h^\mez \partial_x \psi)^3}{2} \int_0^1 (1-\lambda) ^2 \partial_\xi^3 a(\xi+\lambda h^\mez\partial_x\psi)\,d\lambda 
\right] b.
\end{equation}
The term $A_1$ in \eqref{Anouveau} can be written as 
$$
A_1 = \frac{h}{i}\left[ a'(\xi+h^\mez\partial_x\psi) \partial_x b
+\mez h^\mez (\partial_x ^2\psi) a''(\xi+h^\mez\partial_x\psi) 
 b \right].
$$
Therefore
\begin{equation}\label{C12}
\begin{aligned}
A_1=  \frac{h}{i}&\left[ \left\{a'(\xi)+h^\mez \partial_x \psi  \int_0^1 a''(\xi+\lambda h^\mez\partial_x\psi)\,d\lambda \right\}
\partial_x b \right.\\
&\left.
+\mez h^\mez ( \partial_x ^2\psi) a''(\xi+h^\mez\partial_x\psi) 
b\right].
\end{aligned}
\end{equation}

Since $h^\eps\partial_x\psi\in S^{0}_\delta$,  $h^{\eps+\delta} \partial_x^2\psi \in S^0_\delta$ and $5\eps \leq \mu_0$,  we deduce from \eqref{C11} and \eqref{C12} that 
\begin{multline}\label{C13}
A_0+A_1
=\left[a(\xi)+h^\mez a' (\xi)\partial_x \psi +\frac{h}{2} a'' (\xi)(\partial_x\psi)^2\right] b
+\frac{h}{i} a'(\xi)\partial_x b\\
+h h^{\mu_0}h^\eps (c_1 b+ c_2 h^{3\eps+\delta} \partial_x b)
\end{multline}
for some $c_j\in S^0_\delta$. 

Now, consider the term $A_k$ with $k\ge 2$. We have
$$
A_k = \frac{h^k}{i^k k!} \sum_{k_1=0}^k 
\begin{pmatrix} k \\ k_1\end{pmatrix}
\partial_y^{k_1} \left[ (\partial_\xi^k a)(\rho(x,y))\right]\big\arrowvert_{y=x} \partial_x^{k- k_1}b.
$$
Since $h^\mez\partial_x \psi \in S^0_\delta$, we obtain,

$$
c_{k,k_1} :=h^{k_1 \delta}\partial_y^{k_1} \left[ (\partial_\xi^k a)(\rho(x,y))\right]\big\arrowvert_{y=x} \in S^0_\delta.
$$
It follows that 
the generic term in $A_k$ can be written as
\begin{equation*}
h h^{k-1}  h^{-k_1\delta} c_{k,k_1}
h^{-\eps} h^{-(\delta + 3 \eps)(k- k_1)} h^\eps ( h^{\delta+ 3 \eps}\partial_x)^{k-k_1}b.
\end{equation*}
We have, since $3 \eps <\mu_0$ and $k \geq 2$, 
\begin{multline}
k-1 - k_1 \delta - \eps - ( \delta + 3 \eps) (k-k_1)\geq k ( 1 - \delta - 3\eps) -1 - \eps \geq 2( 1 - \delta - 3 \eps) -1 - \eps \\
\geq 2 ( \mez - \delta - 3 \eps) - \eps \geq \mez - \delta - \eps \geq \mu_0
\end{multline}
so that 
$$
A_k=h h^{\mu_0}h^\eps \sum_{\ell=0}^{k}c_\ell (h^{\delta+ 3 \eps} \partial_x )^{\ell} b,\quad 
c_\ell \in S^0_\delta.
$$
 We deduce  from \eqref{C13} that
\begin{equation}\label{C14}
 \sum_{k=0}^{M-1} A_k
=\left[\sum_{j=0} ^2 \frac{1}{j!} a^{(j)}(\xi)(h^\mez \partial_x \psi)^j\right] b 
+\frac{h}{i} a'(\xi)\partial_x b
+h h^{\mu_0}h^\eps \sum_{\ell=0}^{M-1}d_\ell (h^{\delta+3 \eps} \partial_x )^\ell b 
\end{equation}
with $d_\ell \in S^0_\delta$.

 Then it follows from \eqref{C2},\eqref{unpeudeL},\eqref{C6} and \eqref{C14} that 
\begin{align*}
r&=
i\left( -a(\xi) +h^\mez \xi V_h  +h^\mez \partial_\s \psi +h V_h \partial_x \psi\right) b\,Ê\zeta \\
&\quad +h[ \partial_\s b +h^\mez V_h  \partial_x b + \frac1 2h^\mez (\partial_x V_h) b] \zeta\\
&\quad +i\left[a(\xi)+h^\mez  a' (\xi)\partial_x \psi +\frac{h}{2} a'' (\xi)(\partial_x\psi)^2\right] b\, \zeta
+h a'(\xi)\partial_x b\, \zeta\\
&\quad +h h^{\mu_0} h^\eps\sum_{\ell=0}^{M-1}d_\ell (h^{\delta+3 \eps} \partial_x )^\ell b\,\zeta + \sum_{j=1}^3 r_j.
\end{align*}
Gathering the terms in powers of $h$, noting that the coefficient of $h^0$ vanishes 
and using the eikonal equation to see that the coefficient in $h^\mez$ vanishes, 
we are left with
\begin{equation}\label{C16}
r=
h \left( \partial_\s b +a'(\xi)\partial_x b+i f b + h^{\mu_0} h^\eps\sum_{\ell=0}^{M-1}e_\ell (h^{\delta+3\eps} \partial_x )^\ell b
\right)\, \zeta +i\sum_{j=1}^4 r_j.
\end{equation}
where $f =V_h \partial_x\psi + a''(\xi) (\partial_x \psi)^2$ is real-valued,  $e_\ell \in S^0_\delta$ and
\begin{equation}\label{r4}
r_4 = \frac{1}{i}h[-a'(\xi) + h^{\frac{1}{2}} V_h ] b\,\zeta' .
\end{equation} 
It follows from \eqref{psiksi} that
\begin{equation}\label{derf}
\vert \partial_x ^\alpha \partial_ \xi ^k f (\s,x,\xi,h) \vert \leq C_{k \alpha} \s h^{-\eps} h^{-k \eps} h^{-\delta (\alpha +k)},
\end{equation}
for every $(\s,x,\xi,h) \in \mathcal{O}.$ In particular $f \in S^{-2\eps}(\mathcal{O}).$

Now we shall seek $b$ under the form
 \begin{equation}\label{formeb}
b=\sum_{j=0}^{J-1}h^{j\mu_0} b_j.
 \end{equation}
 where the $b_j's$ are the  solutions of the following problems
\begin{equation}\label{C17}
\left\{
\begin{aligned}
&\frac{\partial b_0}{\partial \s} + a'(\xi)\frac{\partial b_0}{\partial x}+i f b_0=0 ,\\
& b_0 \arrowvert_{\s =0}=\chi_{1}(\xi),
\end{aligned}
\right.
\end{equation}
where $\chi_1 \in C_0 ^\infty (\xR) $ has been introduced in \eqref{chi_0chi_1} and
\begin{equation}\label{C18}
\left\{
\begin{aligned}
&\frac{\partial b_j}{\partial \s} + a'(\xi)\frac{\partial b_j}{\partial x}+i f b_j=
-h^\eps\sum_{\ell=0}^{M-1}e_\ell (h^{\delta+3\eps} \partial_x)^\ell b_{j-1} ,\\
& b_j \arrowvert_{\s =0}=0.
\end{aligned}
\right.
\end{equation}
It is easy to see that for all $j$ we have,
\begin{equation}\label{C19}
b_j(\s,x,\xi,h) = \chi_1 (\xi) c_j(\s,x,\xi,h).
\end{equation}

For the estimates we shall use the following elementary lemma.
\begin{lemm}\label{estimations}
If $u$ is a solution of the problem 
$$
\partial_\s u + a'(\xi)\partial_x u +i f u = g,\qquad 
u\arrowvert_{\s =0}=z \in \xC,
$$
where $f$  be real-valued, then it satisfies the estimate
$$ \vert u(\s,x,\xi,h)\vert \leq \vert z \vert + \int_0^\sigma \vert g(\sigma', x + (\sigma'-\sigma)a'(\xi), \xi,h) \vert  d\sigma'$$  
 \end{lemm}
 for every $(\s,x,\xi,h) \in \mathcal{O}.$
 \begin{proof}
 Indeed, the solution is given by 
\begin{multline*}
u(\s,x,\xi,h)=e^{-i\int_0^\s f(\s',x+(\s'-\s)a'(\xi),\xi,h)\, d\s'}\times\\
\times \left\{ z+h^\eps \int_0^\s e^{i\int_0^{\s'} f(t,x+(t-\s)a'(\xi),\xi,h)\, dt}
g(\s',x+(\s'-\s)a'(\xi),\xi,h)\, d\s'\right\}.
\end{multline*}
\end{proof}
 
 Using this lemma we deduce the following.
\begin{lemm}\label{eqb}
The problems \eqref{C17}, \eqref{C18} have unique solutions $b_j = \chi_1(\xi) c_j$  where the $c_j$ satisfy the estimates
\begin{equation}\label{estsymbols}
\vert \partial_x^ \alpha \partial_ \xi ^k c_j (\s,x,\xi,h)\vert \leq C_{\alpha k j} h^{-k \eps} h^{- (\alpha + k) (\delta + 3 \eps)}
\end{equation}
for all $(\s,x, \xi,h) \in \mathcal{O},$ all $ \alpha, k \in \xN,$ and all $j = 0,...,M.$

In particular $c = \sum_{j = 0}^{M} c_j$ belongs to $S^0_{\delta + 3 \eps} (\mathcal{O}).$
\end{lemm}
\begin{proof}
Let us look to the case $ j =0$. Then $c_0$ satisfies the same equation and $c_0 \arrowvert_{\sigma=0} =1.$  We show first \eqref{estsymbols} for $k=0$ and all $\alpha.$ By Lemma \ref{estimations} we have $ \vert c_0 \vert \leq Ch^{- \eps}.$ So assume that \eqref{estsymbols} is true (for $k=0$) up to the order $\alpha -1$ and let us differentiate the equation \eqref{C17} $\alpha $ time with respect to $x.$ It follows that $ U= \partial_x ^\alpha c_0$ satisfies the equation
\begin{equation}\label{equationU}
\frac{\partial U}{\partial \s} + a'(\xi)\frac{\partial U}{\partial x}+i f U = - i \sum_{l=1}^\alpha C^\alpha_l (\partial_x ^l f) \partial_x^{\alpha- l} c_0.
\end{equation}
Using \eqref{derf}, Lemma \ref{estimations} and the induction we deduce that 
\begin{equation*}
 \vert U \vert \leq C h^{- \eps} \sum_{l=1}^\alpha h^{-2 \eps} h^{- l (\delta + 3 \eps)} h^{-( \alpha - l) ( \delta + 3 \eps)} \leq C h^{- \alpha(\delta + 3 \eps)}.
 \end{equation*}
 This proves \eqref{estsymbols} for $k = 0$ and all $\alpha.$ Then using an induction on $k$ we differentiate the equation \eqref{equationU} $k$ times wit respect to $\xi$ we use again \eqref{derf}, Lemma \ref{estimations} and the induction to prove \eqref{estsymbols} for all $k$ and $\alpha.$ The proof of \eqref{estsymbols} for $j \geq 1$ is similar.
\end{proof}
It follows from \eqref{C16}, \eqref{C17}, \eqref{C18} that 
$$
r= \sum_{j=1}^5 r_j.
$$
where
\begin{equation}\label{C20}
r_5 = h^{J\mu_0} b_{J-1} \zeta.
\end{equation}
\subsubsection{End of the proof of Proposition \ref{eikampl}}
We are left with the proof of \eqref{estireste}. For $r_j, j= 1,2,5$ defined in \eqref{C9}, \eqref{C10} and \eqref{C20}  we have for all $N \in \xN$, 
\begin{equation}\label{C21}
 \langle x-z-\s a'(\xi) \rangle \vert r_j(\s,x,z,\xi,h) \vert \leq C_Nh^N \vert \chi_1( \xi)\vert.
 \end{equation}
To  prove \eqref{C21} we write in the integral giving $r_1$ (resp.$r_2$) , $x-z-\s a'(\xi) = (x-y) + (y-z-\s a'(\xi))$, (resp. $ = (x-z - \lambda hz - \s a'(\xi) ) + \lambda hz$) we integrate by parts using the fact that $\frac{h}{i} \partial_\eta e^{\frac{i}{h} (x-y)\eta} = e^{\frac{i}{h} (x-y)\eta}$ and we use the fact that
 \begin{equation}\label{C22} 
 \widetilde{b}(y) = \chi_1(\xi)c(\s,y,\xi,h)\zeta(y-z-\s a'(\xi))
 \end{equation}
 with $c\in S^0_{\delta +3 \eps}$ and $ \delta + 3 \eps < 1.$ Then \eqref{estireste} follows from \eqref{C21} and Young's inequality.
   
 The terms corresponding to $r_3$ and $r_4$ defined in \eqref{Anouveau} and \eqref{r4} will be treated in the same manner and will use the fact that on the support of a derivative of the function $\zeta$ one has $\vert x-z- \s a'(\xi) \vert \geq 1.$  Since by  \eqref{psiksi} we have $h^\mez \vert \partial_\xi \psi \vert \leq Ch^\mez \vert \s \vert \leq C h^{\mez - \eps}$ we deduce from \eqref {C3}  that $\vert \partial_\xi (\varphi(\s,x,\xi,h) - z \xi) \vert \geq \frac{1}{2}$ if $h$ is small enough. Therefore we can integrate N times by parts  using the vector field 
 $$ L = \frac{h}{ i( \partial_\xi (\varphi(\s,x,\xi,h) - z \xi)) } \partial_ \xi. $$ 
 Finally the estimate for the term $r_5$ follows easily from the fact that the convolution of $L^1(\xR)$ with $L^2(\xR)$ is contained in $L^2(\xR)$.
 
   The proof of Proposition \ref{eikampl} is complete.

\end{proof} 

\subsection{Refined Van der Corput estimate}
 Let us recall that we have  set (see \eqref{parametrix})
\begin{equation}\label{1}
  \widetilde{U}_h(t,x) = \int \widetilde{K}(t,x,z,h)u_{0h}(z)\, dz
\end{equation}
where
\begin{equation}\label{2}
\widetilde{K}(t,x,z,h)=\frac{1}{2\pi h} \int e^{\frac{i}{h}(\Phi (t,x,\xi,h)-z\xi)} 
\widetilde{B}(t,x,z,\xi,h)\, d\xi.
\end{equation}
In the variable $\s=t h^{-\mez}$ we have
\begin{equation*}
\widetilde{K}(t,x,z,h)=K(\s,x,z,h)
\end{equation*}
where
\begin{equation*}
K(\s,x,z,h)=\frac{1}{2\pi h} \int e^{\frac{i}{h}(\varphi (\s,x,\xi,h)-z\xi)}\, 
\widetilde{b}(\s,x,z,\xi,h)\, d\xi,
\end{equation*}
where $\varphi$ and $b$ have been determined in \eqref{C3}, \eqref{C17} and \eqref{C18}.
\begin{prop}\label{p4.5}
There exists $C>0$ such that
\begin{equation}\label{3}
\la K(\s,x,z,h)\ra \le \frac{C}{h}\left(\frac{h}{ \s}\right)^{\mez},
\end{equation}
for all $(\s,x,z,h)$ in $]0,h^{-\eps}[\times\xR\times\xR\times ]0,h_0[$.
\end{prop}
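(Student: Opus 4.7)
The kernel $K$ is a semiclassical oscillatory integral in $\xi$ with large parameter $h^{-1}$; my plan is to apply a stationary-phase / Van der Corput argument, exploiting that the phase $\Psi(\xi) \defn \varphi(\sigma,x,\xi,h) - z\xi$ has non-degenerate second derivative in $\xi$ on the support of the amplitude, and that $\widetilde{b}$ is supported in $\{1/3 \leq |\xi|\leq 3\}$ thanks to the factor $\chi_1(\xi)$ present in every $b_j$ (Lemma \ref{eqb}). The easy regime $\sigma \leq h$ is disposed of at once by the trivial bound $|K| \leq (2\pi h)^{-1}\|\widetilde{b}\|_{L^1_\xi} \leq C/h \leq Ch^{-1}(h/\sigma)^{1/2}$, so the content lies in the regime $\sigma \geq h$.

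Compute $\partial_\xi^2 \Psi = -\sigma a''(\xi) + h^{1/2}\partial_\xi^2 \psi$. On $\supp\chi_1\subset\{\chi_0=1\}$ one has $a(\xi)=\langle\xi\rangle^{3/2}$, so $a''$ is bounded below by a positive constant. The bound \eqref{psiksi} with $k=2$, $\alpha=0$ yields $|h^{1/2}\partial_\xi^2\psi|\leq C\sigma h^{1/2-2\eps-\delta}$, which is $o(\sigma)$ since the choice $\eps<\mu_0/5=\frac{1}{10}(\frac12-\delta)$ forces $\frac12-\delta-2\eps>0$. Consequently $|\partial_\xi^2 \Psi|\geq c_0\sigma$ on $\supp\widetilde{b}$ for $h$ small, and $\Psi$ has at most one critical point there, which is non-degenerate. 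The Van der Corput lemma then produces
$$\left|\int e^{i\Psi(\xi)/h}\widetilde{b}\, d\xi\right| \leq C(\sigma/h)^{-1/2}\bigl(\|\widetilde{b}\|_{L^\infty} + \|\partial_\xi \widetilde{b}\|_{L^1}\bigr),$$
and the $L^\infty$ piece, divided by $2\pi h$, yields the claim since Lemma \ref{eqb} combined with $\widetilde{b}=\chi_1(\xi)c(\sigma,x,\xi,h)\zeta(x-z-\sigma a'(\xi))$ gives $\|\widetilde{b}\|_{L^\infty}\leq C$ uniformly.

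The main obstacle is the second term $\|\partial_\xi\widetilde{b}\|_{L^1}$: Lemma \ref{eqb} only gives $|\partial_\xi c|\leq Ch^{-\delta-4\eps}$, a genuine loss, and $\partial_\xi\zeta(x-z-\sigma a'(\xi))$ brings down a factor $\sigma\Vert a''\Vert_\infty$. To circumvent this I refine the argument by splitting the $\xi$-integration into a neighborhood $|\xi-\xi_c|\leq r$ of the critical point $\xi_c$ of $\Psi$ (if no critical point lies in $\supp\widetilde{b}$, pure integration by parts handles the whole integral) and its complement, with $r$ chosen slightly larger than the natural scale $(h/\sigma)^{1/2}$. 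Near $\xi_c$ the direct volume bound $r\|\widetilde{b}\|_\infty/h$ already gives the required size; away from $\xi_c$ I integrate by parts iteratively using $L=(h/i\partial_\xi\Psi)\partial_\xi$, exploiting the Morse lower bound $|\partial_\xi\Psi|\geq c\sigma|\xi-\xi_c|$. Each iteration gains a factor $h(\sigma|\xi-\xi_c|)^{-1}$ at the cost of one $\xi$-derivative of $\widetilde{b}$, i.e.\ an extra factor at most $h^{-\delta-4\eps}+\sigma$. Because $\delta+10\eps<\frac12$ and $\sigma\leq h^{-\eps}$, the gains outweigh the losses, and summing the contributions of the dyadic shells away from $\xi_c$ produces the announced bound $|K|\leq Ch^{-1}(h/\sigma)^{1/2}$.
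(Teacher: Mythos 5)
Your overall scaffolding (lower bound on $\partial_\xi^2\theta$, isolating the critical point, iterated integration by parts to beat the derivative loss from $c$) is the same as the paper's, and the dyadic-shell bookkeeping is a legitimate substitute for the paper's $\Join$ accounting in Lemma~\ref{estI2}. However, there is a genuine gap: your claim that ``the gains outweigh the losses because $\delta+10\eps<\frac12$ and $\sigma\leq h^{-\eps}$'' is only true when $\sigma$ is bounded below by $1$ (up to constants), and you never secure that lower bound.

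To see this concretely: in the shell at distance $d_j=2^j(h/\sigma)^{1/2}$ from the critical point one has $|\partial_\xi\theta|\gtrsim 2^j(h\sigma)^{1/2}$, so each integration by parts that lands on the amplitude $c$ trades a gain $h/|\partial_\xi\theta|\sim 2^{-j}(h/\sigma)^{1/2}$ against a loss $h^{-\delta-4\eps}$ from \eqref{estsymbols}. The net factor is therefore
$2^{-j}\,(h/\sigma)^{1/2}\,h^{-\delta-4\eps}$, which is smaller than $1$ only when $\sigma\gtrsim h^{1-2\delta-8\eps}$. Since $\delta$ is taken close to $\tfrac12$ in Lemma~\ref{lem.3.5}, the exponent $1-2\delta-8\eps$ is a small positive number and the problematic range $Ch\leq\sigma\lesssim h^{1-2\delta-8\eps}$ is large (it exhausts almost all of $[h,1]$ as $\delta\to\tfrac12^-$). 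In that range your iteration fails, the trivial bound $|K|\lesssim h^{-1}$ is insufficient, and the stated estimate is not obtained. The paper avoids this by a scaling step: writing $\tau=\sigma/\sigma_0$, $\widetilde x=x/\sigma_0$, $\widetilde h=h/\sigma_0$ reduces the dispersive estimate for arbitrary $\sigma\geq Ch$ to the case $\sigma=1$ with a new semiclassical parameter $\widetilde h$ for which the rescaled $\widetilde W^\delta_h$ and $\partial_{\widetilde x}\widetilde W^\delta_h$ enjoy bounds uniform in $\sigma_0$. It is only after this reduction, with $\sigma\geq 1$ in hand, that the inequality $h^{1/2-\delta-4\eps}\leq 1$ closes the iteration (see the estimates on $J_k^1,J_k^3$ in the proof of Lemma~\ref{estI2}). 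You need either this rescaling or an alternative device covering $Ch\leq\sigma<1$; without it the argument is incomplete.

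Two smaller remarks. First, the paper splits once at the zero $\rho$ of $\partial_\xi\theta$ and then once more at distance $(h/\sigma)^{1/2}$, using the monotonicity of $\partial_\xi\theta$ to turn the $\partial_\xi(1/(\partial_\xi\theta)^{k+1})$ contribution into a telescoping boundary term; your dyadic decomposition handles the same phase-derivative terms by summability in $j$, which is fine, but you should verify that these terms contribute $\sum_j 2^{-2j}\,d_j/h\lesssim (h/\sigma)^{1/2}/h$ as they do. Second, the derivative hitting $\zeta(x-z-\sigma a'(\xi))$ costs $\sigma$, not ``$h^{-\delta-4\eps}+\sigma$'' lumped together; luckily $2^{-j}(h/\sigma)^{1/2}\sigma=2^{-j}(h\sigma)^{1/2}\leq 2^{-j}h^{(1-\eps)/2}$ is harmless, but it is a separate case and should be treated as such.
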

\begin{proof}Since $b\in S^{0}_{\delta}$ is bounded with compact support in $\xi$, the estimate 
\eqref{3} is trivial for $\la \s\ra \le Ch$. Let us assume that $\la \s\ra\ge Ch$. 
We have by \eqref{tsigma},
$$
 \mathcal{ L} = \left(h \partial_\s +h^\mez W_h^{\delta}(h^\mez \s,x) (h \partial_x) \frac{1}{2} h^{\frac{3}{2}} (\partial_x W_h^\delta)+ i \la h D_x\ra^\tdm\right) 
$$

By a scaling argument we can assume without loss of generality that $\sigma\geq 1$. Indeed, otherwise, setting 
$$\tau=\frac \sigma {\sigma_0}, \quad \widetilde{x} = \frac x {\sigma_0}, \quad \widetilde{h}= \frac h {\sigma_0},$$
we see that in the new variables, the operator reads
$$ \widetilde{L} = \widetilde{h} \partial_\tau + \widetilde{h} ^{1/2}\widetilde{W} _h^\delta \widetilde{h} \partial _x +          \widetilde{h} ^{\frac{3}{2}} (\partial_{\widetilde{x}} \widetilde{W} _h^\delta) + i | \widetilde{h} D_{\widetilde{x}}| ^{3/2}
$$ where 
$$ \widetilde{W} ^\delta_h ( \tau, \widetilde{x} ) = \sigma_0^{1/2} W_h^\delta ( \sigma_0 \tau, \sigma_0 \widetilde{x})$$
and consequently we have 
$$ \widetilde{W}^\delta_h \in L^\infty(H^{s-1}), \qquad \partial_{\widetilde{x}} \widetilde{W}^\delta_{h} \in S^0_\delta$$ with bounds uniform with respect to $\sigma_0$. 

Assume now that the dispersion estimate has been proved for the kernel of the operator $\widetilde{L}$ and $\sigma= 1$. Since we have 
$$ S_h(\sigma) u_0 (x) = (\widetilde{S} _{\widetilde{h}} ( \frac{ \sigma} {\sigma_0}) \widetilde{u_0}) ( \frac x {\sigma_0}), \qquad \widetilde{u_0} (\frac x {\sigma_0}) = u_0 (x),
$$ 
we can write 
\begin{multline}
\|S_h(\sigma_0)u_0 \|_{L^\infty ( \xR)}= \| \widetilde{S} _{\widetilde{h}} ( \frac{ \sigma} {\sigma_0}) \widetilde{u_0}\|_{L^\infty( \xR)}\\
\leq \frac{ C} { |\widetilde{h}|^{1/2}}\|\widetilde{u_0}\|_{L^1( \xR)}\leq \frac{ C | \sigma_0|^{1/2} } { |h|^{1/2} |\sigma_0|} \|u_0\|_{L^1( \xR)}\leq \frac{ C} { |h \sigma_0|^{1/2}}
\end{multline}
which is the dispersion estimate for the kernel of the operator ${L}$ and $\sigma= \sigma_0$.

Let us set
\begin{equation}\label{4}
\theta(x,y,\xi,h)=\varphi(\s,x,\xi,h)-z\xi=(x-z)\xi+a(\xi)\s+h^\mez \psi(\s,x,\xi,h).
\end{equation}
Then
$$
\partial_\xi^2\theta (x,y,\xi,h)=\partial_\xi^2 a(\xi)\s +h^\mez\partial_\xi^2 \psi(\s,x,\xi,h).
$$
Now by \eqref{chi_0chi_1} and \eqref{L}, on the support of $\chi_1$ we have $a(\xi) = \vert \xi \vert ^{\frac{3}{2}}.$ Therefore  $\partial_\xi^2 a(\xi) = \frac{3}{4} \vert \xi \vert ^{-\frac{1}{2}} \geq c_0 >0.$
 On the other hand from \eqref{psiksi} we have  $ \vert \partial_\xi ^2\psi \vert \leq C \s h^{- \delta - 2 \eps} $  which implies $  h^\mez \vert \partial_\xi^2\psi  \vert \le C \s h^{\mez-\delta - 2 \eps} \leq C\s  h^{\mu_0}.$
It follows that on the support of $\chi_1$ one can find a constant $c_1>0$ such that 
\begin{equation}\label{5}
0<c_1 \s \le \partial_\xi^2\theta (x,y,\xi,h)\le \frac{1}{c_1}\s,
\end{equation}
if $h_0$ is small enough.

In the sequel we shall omit to note $(x,z,h)$ which are fixed taking care of the fact that 
all the constants are independent of $(x,z,h) \in \xR\times\xR\times ]0,h_0[$. 

Let us denote by $[\alpha, \beta] \subset [\frac{1}{3}, 3] $ the support of $\chi_1$.We deduce from \eqref{5} that 
the function $\xi \to \partial_\xi \theta (\xi)$ is increasing on $[\alpha, \beta].$  Therefore one can find $\rho \in [\alpha, \beta]$ such that 
$$ \partial_\xi \theta (\xi)\leq 0 \quad \text{for }  \xi \in  [\alpha, \rho], \quad \partial_\xi \theta (\xi) \geq 0 \quad \text{for }  \xi \in [\rho, \beta].$$
Noting   $b(\s,x,\xi,h) = b(\xi)$ and assuming that  $ ]\rho, \beta[$ is non empty, 
we shall estimate
$$ K_+(\s,x,\xi,h) = \frac{1}{2 \pi h} \int_ \rho^\beta e^{\frac{i}{h} \theta (\xi)} b(\xi) \zeta(x - z - a'(\xi)) d\xi,$$
the estimate corresponding to the intervall $[\alpha, \rho]$ being similar.
We write for small $h$,
\begin{equation}\label{I1I2}
\left\{
\begin{aligned}
K_+ &=  \frac{1}{2 \pi h} (I_1 + I_2),\\
I_1 &= \int _\rho ^{\rho + (\frac{h}{\sigma})^\mez}e^{\frac{i}{h} \theta (\xi)} b(\xi) \zeta(x - z - a'(\xi)) d\xi\\
I_2 & =  \int _{\rho + (\frac{h}{\sigma})^\mez} ^ \beta e^{\frac{i}{h} \theta (\xi)} b(\xi) \zeta(x - z - a'(\xi)) d\xi
\end{aligned}
\right.
\end{equation}
We have obviously,
\begin{equation}\label{estI1}
\vert I_1 \vert \leq C (\frac{h}{\sigma})^\mez
\end{equation}
In the integral $I_2$ using \eqref{5} and the Taylor formula we see that,
\begin{equation}\label{dxitheta}
 \partial_\xi \theta (\xi) \geq c_1 \s \big(\frac{h}{\sigma} \big)^\mez = C_1 (h \sigma)^\mez, \quad \forall \xi \in [\rho + (\frac{h}{\sigma})^\mez, \beta].
 \end{equation}
Let us estimate the integral $I_2.$ We introduce the following notation. We shall  write $A  \Join B$ if $\vert A - B \vert \leq C \big (\frac{h}{\sigma} \big)^\mez$ where C is a constant depending only $q.$ Then we can state the following lemma which is a refined version of the well known Van der Corput Lemma.

\begin{lemm}\label{estI2}
For all $k\in \xN$ we have
\begin{equation}\label{J_k}
 I_2 \Join (-1)^k \big(\frac{h}{i} \big)^k  \int _{\rho + (\frac{h}{\sigma})^\mez} ^ \beta  e^{\frac{h}{i} \theta (\xi)} \frac{1}{(\partial_\xi \theta(\xi))^k }\partial_\xi^k q(\xi) d\xi
 \end{equation}
where  $q(\xi) = b(\xi)\zeta(x-z-a'(\xi)$.
\end{lemm}
\begin{proof}
Let us denote by $J_k$ the  term in the right hand side of \eqref{J_k}. The Lemma is true for $k=0.$ Assume it is true up to the order $k.$ Using the fact that $\frac{h}{i \partial_\xi \theta(\xi)} \partial_\xi e^{\frac{i}{h} \theta (\xi)} = e^{\frac{i}{h} \theta (\xi)}$ and integrating by parts in $J_k$ we obtain,
\begin{equation*}
\begin{aligned}
 J_k &= (-1)^{k+1} \big(\frac{h}{i} \big)^{k +1} \int _{\rho + (\frac{h}{\sigma})^\mez} ^ \beta  e^{\frac{h}{i} \theta (\xi)} \partial_\xi \Big(\frac{1}{(\partial_\xi \theta(\xi))^{k+1} }\Big)\partial_\xi^k q(\xi) d\xi \\
 &+(-1)^{k+1} \big(\frac{h}{i} \big)^{k+1}  \int _{\rho + (\frac{h}{\sigma})^\mez} ^ \beta  e^{\frac{h}{i} \theta (\xi)} \frac{1}{(\partial_\xi \theta(\xi))^{k+1} }\partial_\xi^{k+1} q(\xi) d\xi\\
 &+(-1)^{k+1} \big(\frac{h}{i} \big)^{k +1} [e^{\frac{h}{i} \theta (\xi)} \Big(\frac{1}{(\partial_\xi \theta(\xi))^{k+1} }\Big)\partial_\xi^k q(\xi) ]^{\beta}_{\rho + (\frac{h}{\sigma})^\mez} = J_k^1 + J_k^2 + J_k^3.
\end{aligned}
\end{equation*}
 First of all we have  $J_k^2 = J_{k+1}.$ Now using \eqref{estsymbols} and \eqref{dxitheta} we can write,
$$\vert J_k^3 \vert \leq C h^{k+1}\frac{h^{-k(\delta +4 \eps)}}{(h \sigma)^{\frac{k+1}{2}}} \leq C\big(\frac{h}{\sigma}\big)^\mez h^{k(\mez - \delta - 4 \eps)} \leq C \big(\frac{h}{\sigma}\big)^\mez,$$
since $\sigma \geq 1.$ Now using again \eqref{estsymbols} we obtain,
$$ \vert J_k^1 \vert \leq Ch^{k +1 - k(\delta + 4\eps)} \int _{\rho + (\frac{h}{\sigma})^\mez} ^ \beta \la  \partial_\xi \Big(\frac{1}{(\partial_\xi \theta(\xi))^{k+1} }\Big)\ra d\xi.$$
Since by \eqref{5} the function $\partial_\xi \theta$ is increasing we have 
$$ \la  \partial_\xi \Big(\frac{1}{(\partial_\xi \theta(\xi))^{k+1} }\Big)\ra =  -  \partial_\xi \Big(\frac{1}{(\partial_\xi \theta(\xi))^{k+1} }\Big).$$
Therefore we can write,
$$ \vert J_k^1 \vert \leq Ch^{k +1 - k(\delta + 4\eps)}  [- \Big(\frac{1}{(\partial_\xi \theta(\xi))^{k+1} }\Big)]^{\beta}_{\rho + (\frac{h}{\sigma})^\mez}.$$
 We deduce exactly as for $J_k^3$ that,
$$ \vert J_k^1\vert \leq  C \big(\frac{h}{\sigma}\big)^\mez.$$
It follows that $J_k \Join J_k^2$ which proves our induction.
\end{proof}
Now using Lemma \ref{estI2}, \eqref{estsymbols} and \eqref{dxitheta} we can write,
$$\vert J_k \vert \leq C h^k \frac{1}{(h\sigma)^{\frac{k}{2}}} h^{-k(\delta + 4 \eps)} \leq C \big( \frac{h}{\sigma}\big)^\mez h^{k(\mez - \delta - 4 \eps) - \mez} \leq C\big( \frac{h}{\sigma}\big)^\mez h^{k \mu_0 - \mez} h^{k(\mu_0 - 4 \eps)},   $$
so taking $k$ such that $k \mu_0 \geq \mez$ and using \eqref{mu0} we deduce that $ \vert J_k \vert \leq C\big( \frac{h}{\sigma}\big)^\mez.$ It follows the from Lemma \ref{estI2} that $\vert I_2 \vert \leq C\big( \frac{h}{\sigma}\big)^\mez$ and from \eqref{estI1}, \eqref{I1I2} that $\vert K_+ \vert \leq  \frac{C}{h} \big( \frac{h}{\sigma}\big)^\mez$ which completes the proof of Proposition \ref{p4.5}.
\end{proof}
\subsection{End of the proof of Theorem \ref{DISP}}
  Let us set $J_\eps = [0, h^{\mez - \eps}]$. It follows from \eqref{parametrix} and Proposition \ref{eikampl} that
 \begin{equation}\label{eqUtilde}
 \partial_{t}\widetilde{U}_h   +\frac 1 2 (W^\delta_h \partial_x + \partial_x W^\delta_h) \widetilde{U}_h 
+ ia(D_x) \widetilde{U}_h =F_h, \quad  \widetilde{U}_h\arrowvert_{t=0} =  \widetilde{U}_h (0,x),
\end{equation}
with
\begin{equation}\label{estFh}
\sup_{s \in J_\eps}\Vert F_h(s,\cdot) \Vert_{H^1(\xR)} \leq C_N h^N \Vert u_{0,h}\Vert_{L^1(\xR)}.
\end{equation}
We claim that,
\begin{equation}\label{Uzero}
 \widetilde{U}_h (0,\cdot) = u_{0,h} + v_{0,h}, \quad \Vert v_{0,h} \Vert_{H^1(\xR)} \leq C_N h^N \Vert u_{0,h}\Vert_{L^1(\xR)}.
\end{equation}
Indeed using \eqref{parametrix},\eqref{tsigma},\eqref{C3},\eqref{C17},\eqref{C18} and \eqref{C19} Êwe see that,
\begin{equation}\label{vzero}
  v_{0,h}(x) = (2 \pi h)^{-1} \iint e^{{\frac i h} (x-z)\xi} (1 - \zeta(x-z)) \chi_1(\xi) u_{0,h}(z) dz d\xi.
\end{equation}
Since on the support of $1 - \zeta(x-z)$ we have $\vert x-z \vert \geq 1$ we can integrate by parts as much as we want to obtain that for all $N \geq 1,$
$$ v_{0,h}(x) =  c_N h^{N-1}   \iint e^{{\frac i h} (x-z)\xi} [\frac{1 - \zeta(x-z)}{(x-z)^N} ]  (\partial_ {\xi}^N \chi_1)(\xi) u_{0,h}(z) dz d\xi.$$
Using the H\"older inequality we deduce  that,
$$ \vert v_{0,h} (x) \vert^2 \leq C_N h^{N-1} \Big( \int \la \frac{1 - \zeta(x-z)}{(x-z)^N}\ra^2 \vert u_{0,h}(z) \vert dz\Big) \, \Vert u_{0,h} \Vert_{L^1 (\xR)}$$
from which we deduce that,
$$ \Vert v_{0,h} \Vert_{L^2(\xR)} \leq C_N h^{N-1} \Vert u_{0,h} \Vert_{L^1(\xR)}.$$
Differentiating \eqref{vzero} with respect to $x$ and using the same trick we obtain the estimate in \eqref{Uzero}.

Now by \eqref{eqUtilde}, the  Duhamel formula and the definition in Proposition \ref{DISP} we can write,
\begin{equation}\label {S(t)=}
\begin{aligned}
&S(t,0,h)u_{0,h} = D_1 + D_2 + D_3 \quad where \quad  D_1 = \widetilde{U}_h (t,x),\\
 & D_2 = -S(t,0,h)v_{0,h}(x), \, D_3= - \int_{0}^t S(t,s,h) [F_h(s,x)]ds.
\end{aligned}
\end{equation}
First of all the estimate
\begin{equation}\label{A1bis}
\Vert D_1(t) \Vert_{L^\infty(\xR)} \leq \frac C {h^{1/4} |t|^{1/2}} \|u_{0,h}\|_{L^1(\xR)}
\end{equation}
follows from Proposition \ref{p4.5} and \eqref{1}.

Let us  estimate  $D_2$. We have by Sobolev inequality,
$$ \Vert D_2 (t) \Vert_{L^\infty (\xR)} \leq C_1 \Vert D_2(t) \Vert_{H^1(\xR)} \leq C_2 \Vert v_{0,h}\Vert_{H^1(\xR)}, $$
therefore by \eqref{Uzero},
\begin{equation}\label{A2}
\Vert D_2 (t) \Vert_{L^\infty (\xR)} \leq C_N h^N \Vert u_{0,h}\Vert_{L^1(\xR)}.
\end{equation}
Let us look now to the term $D_3$. We have,
$$ \Vert D_3 (t) \Vert_{L^\infty (\xR)} \leq C \int_{J_\eps}\Vert S(t,s,h) F_h(s,\cdot) \Vert_{H^1(\xR)} ds \leq C'\int _{J_\eps}\Vert F_h(s,\cdot) \Vert_{H^1(\xR)} ds, $$
from which we deduce,
\begin{equation}\label{A3}
\Vert D_3 (t) \Vert_{L^\infty (\xR)} \leq C_N h^N \Vert u_{0,h}\Vert_{L^1(\xR)}.
\end{equation}
 Then Theorem \ref{DISP} follows from \eqref{S(t)=}, \eqref{A1bis}, \eqref{A2}, and \eqref{A3}.

\subsection{The $TT^*$ argument}
Having proved the dispersion estimate the Strichartz estimates for the solution of \eqref{Phifquint} follow  very classically.
\begin{prop}\label{prop.strichartz}
 There exist $\eps >0$, $C>0$ such that for any $0<h<1$ and any initial data $u_{0,h}= \chi(hD_x) u_0$, we have
\begin{equation}
\label{eq.strichartz}
\|S(t,0,h) u_{0,h}\|_{L^4((0,h^{\frac 1 2 -\eps}), L^\infty({\xR}))} \leq C \|u_{0,h}\|_{H^{\frac 1 8} (\xR)}.
\end{equation} 
\end{prop}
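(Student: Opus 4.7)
The strategy is the classical $TT^*$ scheme, fed by Theorem~\ref{DISP} and Hardy-Littlewood-Sobolev. First I observe that, since $W^\delta_h$ and $a(\xi)$ are real-valued, the spatial part of the operator in~\eqref{Phifquint} is skew-symmetric on $L^2(\xR)$. Hence $S(t,s,h)$ is unitary on $L^2$, satisfies $S(t,s,h)^*=S(s,t,h)$, and obeys the group law $S(t,r,h)=S(t,s,h)S(s,r,h)$ by uniqueness of solutions to~\eqref{Phifquint}.

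Set $I_\eps=(0,h^{\mez-\eps})$ and define $T\colon L^2(\xR)\to L^4_t(I_\eps;L^\infty_x(\xR))$ by $Tv(t,\cdot):=S(t,0,h)\chi(hD_x)v$. By the $TT^*$ duality, \eqref{eq.strichartz} follows once we show
\begin{equation*}
\|TT^*F\|_{L^4_\tau(I_\eps;L^\infty_x)}\le C h^{-1/4}\|F\|_{L^{4/3}_t(I_\eps;L^1_x)},
\end{equation*}
which gives $\|Tv\|_{L^4_tL^\infty_x}\le Ch^{-1/8}\|v\|_{L^2}$; since the spectrum of $u_{0,h}$ lies in $\{|\xi|\sim h^{-1}\}$, the equivalence $\|u_{0,h}\|_{H^{1/8}}\sim h^{-1/8}\|u_{0,h}\|_{L^2}$ then converts the $h^{-1/8}$ loss into the claimed $H^{1/8}$ regularity. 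Unitarity and the group identity yield
\begin{equation*}
TT^*F(\tau)=\int_{I_\eps} S(\tau,t,h)\bigl[S(t,0,h)\chi(hD_x)^2 S(0,t,h)\bigr] F(t)\,dt.
\end{equation*}

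The bracketed operator is the conjugation of the spectral projector $\chi(hD_x)^2$ by the propagator on the time scale $|t|\le h^{\mez-\eps}$. By a standard Egorov-type argument (implicit in the parametrix of Section~\ref{sec.semiclass}: semi-classical propagation along the Hamiltonian flow of $a(\xi)$ preserves the dyadic frequency annulus to leading order, as reflected in the $\chi_1(\xi)$ factor carried by the amplitude $\widetilde{B}$), one has $S(t,0,h)\chi(hD_x)^2 S(0,t,h)=\chi(hD_x)^2+O(h^\infty)$ as an operator from $L^1$ to $L^1$, uniformly for $t\in I_\eps$. Discarding the negligible remainder and applying Theorem~\ref{DISP} pointwise in $t$ to the frequency-localized datum $\chi(hD_x)^2F(t)$ (together with $\|\chi(hD_x)^2F(t)\|_{L^1}\le C\|F(t)\|_{L^1}$, which follows from $L^1$-boundedness of $\chi(hD_x)$) gives
\begin{equation*}
\|S(\tau,t,h)\chi(hD_x)^2 F(t)\|_{L^\infty_x}\le \frac{C}{h^{1/4}|\tau-t|^{1/2}}\|F(t)\|_{L^1_x}
\end{equation*}
for all $\tau,t\in I_\eps$. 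One-dimensional Hardy-Littlewood-Sobolev ($L^{4/3}\to L^4$ via fractional integration of order $\frac12$) then yields the claimed $TT^*$ bound.

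The only delicate step is the Egorov-type approximation of the conjugation, which relies crucially on the specifics of the parametrix of Section~\ref{sec.semiclass} and on the semi-classical time window $|t|\le h^{\mez-\eps}$; on longer time intervals this approximation would fail, which is precisely the role played by the time restriction in \eqref{eq.strichartz}. The remaining steps (energy conservation, $L^{4/3}\to L^4$ fractional integration, Bernstein's equivalence for spectrally localized functions) are entirely standard.
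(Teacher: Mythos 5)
Your proof follows the same skeleton as the paper's: $TT^*$ argument, self-adjointness of $\frac12(W^\delta_h\partial_x+\partial_x W^\delta_h)$ giving $S(s,0,h)^*=S(0,s,h)$ and hence, by the group law, $S(t,0,h)S(s,0,h)^*=S(t,s,h)$, then the dispersion estimate of Theorem~\ref{DISP} followed by Hardy--Littlewood--Sobolev, and finally the Bernstein equivalence $\|u_{0,h}\|_{H^{1/8}}\sim h^{-1/8}\|u_{0,h}\|_{L^2}$ to convert the $h^{-1/8}$ loss into the $H^{1/8}$ norm. The paper's proof is extremely terse and does not address the fact that $\chi(hD_x)$ does not commute with $S(t,s,h)$, so that the data fed into the dispersion estimate inside $TT^*$ is not manifestly of the form $\chi(hD_x)u_0$; you are right to flag this as the delicate step and to reach for an Egorov-type commutation.

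However, the precise form of your Egorov claim, namely $S(t,0,h)\chi(hD_x)^2 S(0,t,h)=\chi(hD_x)^2+O(h^\infty)$ as operators on $L^1$, uniformly for $|t|\le h^{\mez-\eps}$, is too strong and would not hold. In the semi-classical variables of Section~\ref{sec.semiclass} ($\s=t h^{-\mez}$, rescaled frequency $h\xi$), the Hamiltonian flow of $a(h\xi)+h^\mez V_h\cdot(h\xi)$ moves the rescaled frequency at rate $\dot{(h\xi)}=-h^\mez(\partial_x V_h)(h\xi)=O(h^\mez)$, and over $\s\in[0,h^{-\eps}]$ this accumulates to a drift of order $O(h^{\mez-\eps})$ in $h\xi$, not $O(h^\infty)$: the conjugated projector is a shifted and deformed cutoff, not $\chi(hD_x)^2$ modulo negligible terms. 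What is true — and what the argument actually needs — is only that the conjugated projector stays microlocalized (up to $O(h^\infty)$ tails) in a slightly fattened annulus $\{1/3\le|h\xi|\le 3\}$, on which Theorem~\ref{DISP}, invoked with a correspondingly fattened initial cutoff, still applies and yields $\|S(\tau,0,h)\chi(hD_x)^2S(0,t,h)g\|_{L^\infty}\lesssim h^{-1/4}|\tau-t|^{-1/2}\|g\|_{L^1}$. With this correction, your chain of reasoning (HLS in the time variable, then the spectral localization to pass from $L^2$ to $H^{1/8}$) is sound and coincides with the paper's.
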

\begin{proof}
Indeed, applying the usual $TT^*$ argument, it suffices to prove that the operator
$$ \int_0^{ h^{\frac 1 2 - \eps}} S(t,0,h) S(s,0,h)^* f(s) ds$$ 
maps continuously 
$L^{\frac 4 3}((0,h^{\frac 1 2 -\eps}),  L^1(\xR))$ to $L^4((0,h^{\frac 1 2 -\eps}),  L^\infty (\xR))$. But a direct calculation shows that since $\frac1 2 (W^\delta_h \partial_x + \partial_x W^\delta_h)$ is self adjoint, one has 
$$ S(s,0,h)^*= S(0,s,h),$$
and consequently, Proposition~\ref{prop.strichartz} follows from the classical Hardy-Littlewood-Sobolev inequality and the dispersion estimate~\eqref{eq.dispersion1}.
\end{proof}
\begin{coro}\label{SNH}
Let  $u$ be a solution of the problem
$$
\partial_t u+ \frac 1 2 (W_h^\delta \partial_x  +\partial_x W_h^\delta) u+i a(D_x) u =f,\quad u\arrowvert _{t=0}=0
$$
with $\supp \hat{f} \subset \{\frac 1 2 h^{-1}\leq \vert \xi \vert \leq 2 h^{-1}\}.$ Then we have,

$$\lA u\rA_{L^4((0,h^{\mez- \varepsilon}), L^\infty(\xR))}\le Kh^{- 1/8} \lA f\rA_{L^1((0,h^{\mez- \varepsilon}), L^2(\xR))}.$$
\end{coro}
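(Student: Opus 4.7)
The plan is to reduce the inhomogeneous Strichartz estimate to the homogeneous one via the Duhamel formula and Minkowski's integral inequality, exploiting the frequency localization of $f$ to convert the $H^{1/8}$ norm appearing in Proposition~\ref{prop.strichartz} into an $L^2$ norm at the cost of a factor $h^{-1/8}$.

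More precisely, I would first write, by Duhamel's formula,
\begin{equation*}
u(t) = \int_0^t S(t,s,h)f(s)\,ds,
\end{equation*}
where $S(t,s,h)$ denotes the propagator associated to the equation with initial time $s$, as introduced in Theorem~\ref{DISP}. Pick a cut-off $\chi\in C_0^\infty(\xR)$ with $\chi\equiv 1$ on $\{\tfrac12\le|\xi|\le 2\}$ and supported in a slightly larger annulus; the frequency support hypothesis on $f$ then gives $f(s,\cdot)=\chi(hD_x)f(s,\cdot)$ for almost every $s$. Since the dispersion estimate of Theorem~\ref{DISP} and hence the Strichartz estimate of Proposition~\ref{prop.strichartz} are proved for $S(t,t_0,h)$ with arbitrary initial time $t_0$, the same $TT^*$ argument yields, for each $s\in[0,h^{\mez-\eps}]$,
\begin{equation*}
\|S(\cdot,s,h)f(s)\|_{L^4((s,h^{\mez-\eps}),L^\infty(\xR))}\le C\|f(s)\|_{H^{1/8}(\xR)}.
\end{equation*}

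Now, by Minkowski's integral inequality applied to the Duhamel representation (extending the inner integration in $s$ to $[0,h^{\mez-\eps}]$ using that $S(t,s,h)f(s)$ may be set to zero for $s>t$),
\begin{equation*}
\|u\|_{L^4_t L^\infty_x}
\le\int_0^{h^{\mez-\eps}}\|S(\cdot,s,h)f(s)\|_{L^4_t L^\infty_x}\,ds
\le C\int_0^{h^{\mez-\eps}}\|f(s)\|_{H^{1/8}(\xR)}\,ds.
\end{equation*}
The spectral support hypothesis gives $\|f(s)\|_{H^{1/8}}\le Ch^{-1/8}\|f(s)\|_{L^2}$ uniformly in $s$, which combined with the previous inequality yields the claimed bound
\begin{equation*}
\|u\|_{L^4((0,h^{\mez-\eps}),L^\infty(\xR))}\le Kh^{-1/8}\|f\|_{L^1((0,h^{\mez-\eps}),L^2(\xR))}.
\end{equation*}

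The only non-automatic point is the use of Proposition~\ref{prop.strichartz} with an arbitrary starting time $s$ in place of $0$; since the semi-classical parametrix of Section~\ref{sec.semiclass} was constructed for $S(t,t_0,h)$ with generic $t_0$ (Theorem~\ref{DISP}), the $TT^*$ argument goes through verbatim, with $S(t,s,h)^{*}=S(s,t,h)$ by self-adjointness of $\tfrac12(W_h^\delta\partial_x+\partial_x W_h^\delta)$, as already noted in the proof of Proposition~\ref{prop.strichartz}. No finer Christ--Kiselev type argument is needed since the source-side exponent is $p=1$, which allows direct use of Minkowski.
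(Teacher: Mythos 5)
Your proposal is correct and follows essentially the same route as the paper: Duhamel formula, Minkowski's integral inequality, the homogeneous Strichartz estimate of Proposition~\ref{prop.strichartz}, and conversion of $H^{1/8}$ to $L^2$ via the frequency localization of $f$. The only cosmetic difference is that you apply the Strichartz estimate directly to $S(\cdot,s,h)f(s)$ with initial time $s$, whereas the paper factors $S(t,s,h)=S(t,0,h)S^*(s,0,h)$ and invokes the estimate from time $0$ together with the $H^{1/8}$-boundedness of $S^*(s,0,h)$ -- two bookkeepings of the same argument.
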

\begin{proof}
Indeed we have,
$$ u(t, \cdot) =  \int^t_0 S(t,0,h)S^*(s,0,h)f(s, \cdot)\,ds.$$
Let us set $ J_\eps = [0, h^{\frac 1 2 - \eps}].$ It follows from Proposition \ref{prop.strichartz} that,
\begin{equation*}
\begin{aligned}
  \lA u\rA_{L^4(J_\eps, L^\infty(\xR))} &\leq C  \int_0^{ h^{\frac 1 2 - \eps}}  \Vert S(s,0,h)^* f(s, \cdot)\Vert_{H^{\frac 18}(\xR)} ds \\
  &\leq C' \int_0^{ h^{\frac 1 2 - \eps}}  \Vert  f(s, \cdot)\Vert_{H^{\frac 18}(\xR)} ds  \leq C''h^{- 1/8} \lA f\rA_{L^1(J_\eps, L^2(\xR))},
\end{aligned}
\end{equation*}
since $\hat{f}$ is supported in $\{\frac 1 2 h^{-1}\leq \vert \xi \vert \leq 2 h^{-1}\}.$
\end{proof}
\subsection{Gluying the estimates}
It remains to glue the estimates which up to now have been proved on small time intervals of size $h^{\mez- \varepsilon}.$
Recall that from Lemma~\ref{lem.3.5} we have
$$
\partial_t u_h+\frac 1 2 (W_h^\delta \partial_x  +\partial_x W_h^\delta) u_h +i\la D_x\ra^\tdm u_h =f_h \in L^\infty((0, \tau); H^{s+\varepsilon- \mez}).
$$
Let $\varphi \in C^\infty_0 ( 0,2)$, equal to $1$ on $(\mez, \tdm)$.  For $ -1 \leq k \leq h^{\varepsilon - \mez}$, define 
$$
u_{h,k}=\varphi\Bigl( \frac{t-kh^{\mez- \varepsilon}}{h^{\mez- \varepsilon}}\Bigr) u_h,
$$
which satisfies
\begin{multline}
\partial_t u_{h,k}+ \frac 1 2 (W_h^\delta \partial_x u +\partial_x W_h^\delta)u_{h,k} +i\la D_x\ra^\tdm u_{h,k} \\
=\varphi\Bigl( \frac{t-kh^{\mez- \varepsilon}}{h^{\mez- \varepsilon}}\Bigr)f_h+ h^{\varepsilon - \mez} \varphi '\Bigl( \frac{t-kh^{\mez- \varepsilon}}{h^{\mez- \varepsilon}}\Bigr) u_h ,\quad u_{h,k}\arrowvert _{t=kh^{\mez - \varepsilon}}=0,
\end{multline}
As a consequence, using Corollary \ref{SNH} we obtain, 
\begin{multline}
 \|u_{h,k}\|_{L^4((kh^{\mez -\varepsilon}, (k+2)h^{\mez -\varepsilon} ),\, L^\infty(\xR))}\\
 \leq h^{- \frac 1 8}\Bigl\|\varphi\Bigl( \frac{t-kh^{\mez- \varepsilon}}{h^{\mez- \varepsilon}}\Bigr)f_h+ h^{\varepsilon - \mez} \varphi '\Bigl( \frac{t-kh^{\mez- \varepsilon}}{h^{\mez- \varepsilon}}\Bigr) u_h\Bigr\|_{L^1((kh^{\mez -\varepsilon}, (k+2)h^{\mez -\varepsilon} ),\, L^2(\xR))}\\
 \leq  Ch^{\mez - \varepsilon- \frac 1 8}\Bigl(\|f_h\|_{L^\infty(0, \tau), \, L^2(\xR))}+ h^{\varepsilon - \mez} \| u_h\|_{L^\infty((0, \tau),\,  L^2(\xR))}\Bigr)\leq C h^{s - \frac 1 8}
\end{multline}
where in the last inequality we used the assumption $u\in L^\infty((0, \tau), H^s( \xR))$ and~\eqref{eq.estj1}.  Eventually, noticing that, 
$$ \|u_h\|^4_{L^4((0 , \tau ), \, L^\infty(\xR))}\leq \sum_{k= -1}^{h^{\varepsilon - \mez}} \|u_{h,k}\|^4_{L^4((kh^{\mez -\varepsilon}, (k+2)h^{\mez -\varepsilon} ), \, L^\infty(\xR))},
$$
we obtain 
$$ \|u_h\|^4_{L^4((0 , \tau ), \, L^\infty(\xR))}\leq C h^{\varepsilon - \mez} h^{4(s- \frac 1 8)}
$$ which implies 
$$ \|u_h\|_{L^4((0 , \tau ), \, W^{s- \frac 1 4,\infty}(\xR))}\leq C h^{\varepsilon/4}
$$
and consequently, since $h= 2^{-j}$, 
\begin{multline}
 \|u\|_{L^4((0 , \tau ), \, W^{s+\frac{\varepsilon} 8- \frac 1 4,\infty}(\xR))}\\
 \leq  \sum_{j=0}^\infty \|u_{2^{-j}}\|_{L^4((0 , \tau ), \, W^{s+\frac{\varepsilon} 8- \frac 1 4,\infty}(\xR))}
 \leq C \sum_{j=0}^\infty 2^{-j\varepsilon /4}<+\infty.
\end{multline}
\section{Classical time parametrix}
In this section we take $s> \frac{11}{2}$ and we  prove  the usual Strichartz estimates. The main step is, as before, the dispersion estimate. 
 To do so,  we seek a parametrix.  The main difference with respect to the previous section is that (in the semi-classical framework), we are looking for a large ($\mathcal{O}( h^{-1/2})$) time parametrix. As a consequence, the lower order term $T_W \partial_x$ induces oscillations. This is reflected in the fact that the new eikonal equation will be quasi-linear.
  
 We begin by an analogue of Theorem \ref{DISP}.
   
\begin{theo} \label{DISP2}
 Let $\chi \in C^\infty_0(\xR^)$ with $\supp \chi \subset \{ \xi : \frac{1}{2} \leq \vert \xi \vert \leq 2 \}$ and $t_0 \in \xR$. For any initial data $u_{0,h}= \chi(hD_x) u_0$ where $u_0 \in L^1(\xR)$ let us denote by 
 $S(t,t_0,h) u_{0,h}:= U_h $  the solution of
\begin{equation}\label{}
\partial_{t}U_h   +\frac1 2 (W^\delta_h \partial_x + \partial_x W^\delta_h) U_h  
+i a(D_x) U_h =0, \qquad U_h \mid_{t=t_0} = u_{0,h}.
\end{equation}
 Then there exists $\tau_0>0$ such that for any $0<h \leq1$ and any $|t-t_0|\leq \tau_0 $,
\begin{equation}
\label{eq.dispersion}
\|S(t,t_0,h) u_{0,h}\|_{L^\infty(\xR)} \leq \frac C {h^{1/4} |t-t_0|^{1/2}} \|u_{0,h}\|_{L^1(\xR)}.
\end{equation} 
\end{theo}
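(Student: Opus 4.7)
The strategy parallels Section~\ref{sec.semiclass}: by translation invariance we may take $t_0=0$ and seek a WKB parametrix
\begin{equation*}
\widetilde U_h(t,x) = \frac{1}{2\pi h}\iint e^{\frac{i}{h}(\Phi(t,x,\xi,h) - z\xi)}\widetilde B(t,x,z,\xi,h)\, u_{0,h}(z)\, dz\, d\xi,
\end{equation*}
so that the dispersion estimate~\eqref{eq.dispersion} reduces to the pointwise kernel bound $|\widetilde K(t,x,z,h)|\le C h^{-1/4}|t|^{-1/2}$. Switching to semi-classical time $\sigma = t h^{-\mez}$ turns the target window $|t|\le \tau_0$ into $|\sigma|\le \tau_0 h^{-\mez}$, far beyond the range $|\sigma|\le h^{-\eps}$ used in Section~\ref{sec.semiclass}.

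The essential new difficulty is that on this long time scale the subprincipal term $\frac{h^\mez}{2}(V_h h\partial_x + h\partial_x V_h)$ can no longer be handled perturbatively: it generates a phase $\sim h^\mez \xi V_h \cdot \sigma = O(1)$, and the Taylor expansion~\eqref{C11} of $a(\xi+h^\mez\partial_x\psi)$ no longer converges termwise. The eikonal equation must therefore be kept in its fully \emph{quasilinear} form: writing $\Phi(t,x,\xi,h)=x\xi - h^{-\mez} t\, a(\xi) + h^\mez \Psi(t,x,\xi,h)$, one is led to
\begin{equation*}
\partial_\sigma\Psi + h^{-\mez}\bigl[a(\xi+h^\mez\partial_x\Psi) - a(\xi)\bigr] + \xi V_h(\sigma,x) = 0,\qquad \Psi|_{\sigma=0}=0,
\end{equation*}
equivalent to the Hamilton--Jacobi problem $\partial_t\Phi + W_h^\delta\,\partial_x\Phi + \chi_0(h\partial_x\Phi)|\partial_x\Phi|^\tdm = 0$, $\Phi|_{t=0}=x\xi$.

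I would solve this eikonal by the method of characteristics for the Hamiltonian $H(t,x,p)=W_h^\delta(t,x)\,p + \chi_0(hp)|p|^\tdm$: the system $\dot x = \partial_p H$, $\dot p = -\partial_x H$ with $(x,p)|_{t=0}=(y,\xi)$. Since $s>11/2$, Lemma~\ref{defW} gives $\partial_t W,\partial_x^2 W,\partial_t\partial_x W\in L^\infty$, and the choice $\delta(s-\tdm)=1$ from Lemma~\ref{lem.3.6} makes all further $x$-derivatives of $W_h^\delta$ available with loss $h^{-k\delta}$; a standard Picard argument then produces a $C^\infty$ flow on $[0,\tau_0]$ for $\tau_0$ small enough (independent of $h$), and $\Phi$ is recovered from the action integral, with $h^\mez\partial_x \Psi$ lying in natural analogues of the symbol classes of Definition~\ref{def.symb}. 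The amplitude $B$ is then produced by a hierarchy of transport equations along the curved characteristics, exactly as in~\eqref{formeb}--\eqref{C18} but with the cut-off $\zeta(\cdot)$ in $\widetilde B$ adapted to the nonlinear bicharacteristic; the remainder estimate~\eqref{estReste} follows from the same non-stationary-phase integrations by parts, now using $|\partial_\xi(\Phi - z\xi)|\gtrsim 1$ away from the support of $\zeta$.

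Once the parametrix is in hand, the kernel bound $|\widetilde K|\le Ch^{-1/4}|t|^{-1/2}$ follows from the refined Van der Corput estimate (Proposition~\ref{p4.5}), whose only real hypothesis is $c_1 t \le \partial_\xi^2\Phi \le c_1^{-1} t$. This non-degeneracy survives because $\partial_\xi^2\Phi = -t\,a''(\xi) + h^\mez\partial_\xi^2\Psi$, $a''(\xi)\ge c_0>0$ on $\supp\chi_1$, and the nonlinear contribution is controlled by $O(h^\mez\cdot t\cdot h^{-c\delta})\ll t$ for $\tau_0$ small. The main obstacle I expect is the uniform construction and control of the nonlinear Hamiltonian flow over the full $O(1)$ time window: one must verify that $\partial x/\partial\xi$ stays bounded from below, and that higher derivatives of the flow with respect to $(\xi,y)$ retain acceptable powers of $h^{-\delta}$ despite each $x$-derivative of $W_h^\delta$ costing a factor $h^{-\delta}$. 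This is precisely the point where the regularity threshold $s>11/2$ (so that $\delta(s-\tdm)=1$ gives $\delta<1/4$) enters.
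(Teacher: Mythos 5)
You correctly identify the two new structural features of this regime --- that the eikonal equation must be kept in quasilinear form over the $O(1)$ physical time window, and that it must be solved by characteristics with uniform-in-$h$ control of the flow on $[0,\tau_0]$ --- and your sketch of that part is consistent with the paper's Propositions~\ref{eiko}--\ref{psixi}. However, your treatment of the amplitude and of the final oscillatory integral contains a genuine gap. You retain the polynomial ansatz $b=\sum_j h^{j\mu_0}b_j$ from~\eqref{formeb} and assert that Proposition~\ref{p4.5} (the refined Van der Corput argument) applies because $\partial_\xi^2\Phi\asymp t$. That argument requires, after $k$ integrations by parts, bounds of the form $\la\partial_\xi^k q\ra\lesssim h^{-k(\delta+3\eps)}$ on the amplitude (as in Lemma~\ref{eqb}); but over the long window $\s\lesssim h^{-\mez}$, the $\xi$-derivatives of the amplitude in Section 5 are far worse. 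Indeed Proposition~\ref{psixi} gives $\la\partial_\xi^2\psi\ra\lesssim h^{-\mez}\s\sim h^{-1}$, and Proposition~\ref{b} gives $\la\partial_\xi^2\theta_p\ra\lesssim h^{-\mez-2\delta}\s\sim h^{-1-2\delta}$; fed into the Van der Corput iteration these produce factors that grow with $k$, and the argument does not close.

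The paper's resolution, which you miss, is twofold. First, the amplitude is taken in exponential form $b=\chi_1(\xi)e^{\widetilde\theta}$ with $\widetilde\theta=\sum_p h^{p\mu_0}\theta_p$, so that the zeroth transport equation~\eqref{c111} is \emph{exactly} linearisable (one writes $Lb=e^{\widetilde\theta}L\widetilde\theta$) rather than solved order by order. Second --- and this is the essential point --- the factor $e^{\widetilde\theta}$ is absorbed into the phase, so the kernel is an oscillatory integral with \emph{complex} phase $\phi=\varphi-z\xi+\tfrac h i\widetilde\theta$ and a trivial cutoff amplitude $\chi_1(\xi)\zeta(\cdot)$. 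One then applies Lemma~\ref{phasestatio}, a stationary-phase inequality valid for complex phases under the hypotheses $\la\IM\phi\ra\le h$, $\la\IM\phi''\ra\le\rho$, and $\rho/2\le\RE\phi''\le\rho$, which hold here with $\rho\sim\s$ precisely because the large $\partial_\xi^2\theta_p$ appear multiplied by $h$ and are thus $O(h^{\mez-2\delta}\s)\ll\s$ (here $\delta<1/4$ is used). This is where the promised ``non classical stationary phase lemma involving precise control on the remainder terms'' from the introduction enters, and it is not a cosmetic replacement of Van der Corput: without it, the deteriorating $\xi$-regularity of the amplitude over times $\s\sim h^{-\mez}$ blocks the kernel estimate.
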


According to Lemma~\ref{lem.3.6},~Theorem \ref{DISP2}, the Duhamel formula and the same $TT^*$ argument as in Section 4,  we deduce 
\begin{coro}\label{cor.5.2}
With the notations of Lemma~\ref{lem.3.6}, we have
 \begin{equation}
\|u_h\|_{L^4((0,T), L^\infty( \xR))}=\|\Delta_j u\|_{L^4((0,T), L^\infty( \xR))} \leq C 2^{j(\frac 1 8-s)}c_j,\quad c_j \in \ell^2.
\end{equation}
\end{coro}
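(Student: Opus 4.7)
The plan is to follow the same scheme used for Theorem~\ref{theo:main} (i.e.\ the sequence Proposition~\ref{prop.strichartz}, Corollary~\ref{SNH}, gluing), but with the crucial simplification that the dispersion time in Theorem~\ref{DISP2} is of size $\tau_0$ independent of $h$, instead of $h^{\mez-\eps}$. This replaces the sub-optimal exponent obtained in Section~\ref{sec.semiclass} by the optimal one.

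First, I would rerun the $TT^*$ argument of Proposition~\ref{prop.strichartz} starting from the dispersion bound~\eqref{eq.dispersion} on the interval $(0,\tau_0)$. Since the kernel of $S(t,0,h) S(s,0,h)^* = S(t,s,h)$ is bounded in $L^1 \to L^\infty$ by $C h^{-1/4}|t-s|^{-1/2}$ on $|t-s|\le\tau_0$, the Hardy--Littlewood--Sobolev inequality yields the homogeneous estimate
$$
\|S(t,0,h) u_{0,h}\|_{L^4((0,\tau_0),L^\infty(\xR))} \le C h^{-1/8}\|u_{0,h}\|_{L^2(\xR)}
$$
for $u_{0,h}=\chi(hD_x)u_0$. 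The adjoint argument and Duhamel's formula then give the inhomogeneous version: if $\partial_t u + \frac{1}{2}(W_h^\delta\partial_x+\partial_xW_h^\delta)u + ia(D_x)u = f$ with $u|_{t=0}=0$ and $\widehat{f}$ supported in $\{\frac{1}{2}h^{-1}\le|\xi|\le 2h^{-1}\}$, then
$$
\|u\|_{L^4((0,\tau_0),L^\infty(\xR))} \le C h^{-1/8}\|f\|_{L^1((0,\tau_0),L^2(\xR))}.
$$

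Next, I would apply this inhomogeneous bound to $u_h=\Delta_j u$, which according to Lemma~\ref{lem.3.6} (with $\delta=1/(s-3/2)$) satisfies
$$
\partial_t u_h + \tfrac{1}{2}(W_h^\delta \partial_x + \partial_x W_h^\delta)u_h + ia(D_x)u_h = f_h,
$$
with $\|f_h\|_{L^\infty(I,H^s)} \le C$ and both $u_h,f_h$ frequency-localized at scale $h^{-1}=2^j$. Because the spectral localization transfers $H^s$ bounds to weighted $L^2$ bounds with $2^{-js}c_j$, $c_j\in\ell^2$, one has $\|f_h(t,\cdot)\|_{L^2}\le C 2^{-js}c_j$ uniformly in $t$, and similarly $\|u_h(t,\cdot)\|_{L^2}\le C 2^{-js}c_j$.

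Finally, I would glue. Choose $\varphi\in C^\infty_0(0,2)$ equal to $1$ on $(\mez,\tdm)$, and for $k=-1,0,\ldots,[T/\tau_0]$ set $u_{h,k}=\varphi\big((t-k\tau_0)/\tau_0\big)u_h$. Then $u_{h,k}$ vanishes at $t=k\tau_0$ and solves the same equation with right-hand side $\varphi(\cdot)f_h + \tau_0^{-1}\varphi'(\cdot)u_h$. Applying the inhomogeneous estimate on $(k\tau_0,(k+2)\tau_0)$ gives
$$
\|u_{h,k}\|_{L^4((k\tau_0,(k+2)\tau_0),L^\infty)}
\le C h^{-1/8}\,\tau_0\,\bigl(\|f_h\|_{L^\infty(I,L^2)}+\tau_0^{-1}\|u_h\|_{L^\infty(I,L^2)}\bigr)
\le C\,2^{j(1/8-s)}c_j.
$$
Here the decisive point, and the only meaningful difference with Section~\ref{sec.semiclass}, is that $\tau_0$ is a fixed constant independent of $h$, so the number of intervals is $O(1)$ and no $h$-dependent loss is introduced in the counting. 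Summing the fourth powers yields
$$
\|u_h\|_{L^4((0,T),L^\infty(\xR))}^4 \le \sum_{k=-1}^{[T/\tau_0]}\|u_{h,k}\|_{L^4((k\tau_0,(k+2)\tau_0),L^\infty)}^4 \le C\,2^{4j(1/8-s)}c_j^4,
$$
which is the desired inequality. I do not expect any genuine obstacle beyond bookkeeping: the analytic work was absorbed into Theorem~\ref{DISP2}, and the $h$-independence of $\tau_0$ is what lets the naive gluing produce the optimal gain of $1/8$ derivative.
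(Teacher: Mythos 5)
Your proof is correct and follows exactly the route the paper intends: the paper itself gives no details beyond ``According to Lemma~\ref{lem.3.6}, Theorem~\ref{DISP2}, the Duhamel formula and the same $TT^*$ argument as in Section 4,'' and your reconstruction of the $TT^*$ estimate, the inhomogeneous estimate via duality, and the gluing over $O(1)$ intervals of size $\tau_0$ is what is meant. You correctly identify the decisive simplification: $\tau_0$ is $h$-independent, so the interval count adds no loss.

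One small imprecision worth flagging: the assertion that ``spectral localization transfers $H^s$ bounds to weighted $L^2$ bounds with $2^{-js}c_j$, $c_j\in\ell^2$'' applies cleanly to $u_h=\Delta_j u$ (a Littlewood--Paley block of a fixed $H^s$ function), but Lemma~\ref{lem.3.6} as stated only gives the uniform bound $\|f_h\|_{L^\infty(I,H^s)}\le C$, which combined with frequency localization yields $\|f_h\|_{L^\infty L^2}\lesssim 2^{-js}$ without the $\ell^2$ factor. To get $c_j\in\ell^2$ for the $f_h$-contribution one must revisit the terms $g_h^1,\dots,g_h^4$ in the proof of Lemma~\ref{lem.3.5}/\ref{lem.3.6}: $\Delta_j f$ contributes $\ell^2$ since $f\in L^\infty H^s$; the commutator $[\Delta_j,T_W]\partial_x u$ restricts to blocks $\Delta_k u$ with $|k-j|\le N$, again giving $\ell^2$; and $g_h^2,g_h^3,g_h^4$ act on the already localized $u_h=\Delta_j u$. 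This is a bookkeeping point the paper also leaves implicit, so your argument is acceptable, but the one-line justification you gave does not literally apply to $f_h$.
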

In the remaining of this section, we shall prove Theorem~\ref{DISP2}. We need first to refine the constructions 
in Section~\ref{sec.semiclass} to handle  large times. 
An important point in the construction of the phase function is that handling 
large times leads us to {\em non linear geometric optics}. 

Our parametrixe will be of the form \eqref{parametrix},\eqref{amplitude} that is,
\begin{equation}\label{parametrix2}
\widetilde{U}_h(t,x)= \frac 1 {2\pi h} \iint e^{\frac i h (\Phi(t,x,\xi,h)- z\xi)} \widetilde {B}(t,x,z,\xi,h) u_{0,h}(z) dz d\xi,
\end{equation}
where $\Phi$  will satisfy the eikonal equation and
\begin{equation}\label{amplitude2}
 \widetilde {B}(t,x,z,\xi,h) = B(t,x,\xi,h) \zeta(x-z-th^{- \mez}a'(\xi)),
 \end{equation}
where $B$ will satisfy the transport equations and $ \zeta \in C_0^\infty (\xR)$, $\zeta(s) = 1$ if $\vert s \vert \leq 1$, $\zeta (s) = 0$ if $\vert s \vert \geq 2.$

  \subsection{Notations}
  In this section we fix
$$s> \frac{11}{2} \text{ and } \delta = \frac{1}{s- \frac{3}{2}}<\frac 1 4. $$

 As before  we shall set $2^{-j} =h$, where  $j\in \xN$ and we shall work with the semiclassical time $\s = th^{-\mez}$.

 In addition to the function $\chi$ introduced in Theorem~\ref{DISP2}, we shall use two more cut-off functions $\chi_j \in C_0^\infty (\xR), j = 1,2$,  such that,
 \begin{equation}\label{chi0chi1}
 \left\{
 \begin{aligned}
  &\supp \chi_1 \subset \{ \xi : \frac{1}{3} \leq \vert \xi \vert \leq 3 \}, \quad \chi_1 = 1\,  \text{on the support of }  \chi, \\
  &\supp \chi_0 \subset \{ \xi : \frac{1}{4} \leq \vert \xi \vert \leq 4 \}, \quad\chi_0 = 1 \,\text{on the support of }  \chi_1.
  \end{aligned}
  \right.
\end{equation}
 Recall  that we have
\begin{align*}
\tag{i} &
W\in L^\infty ([0,T], W^{2,\infty}(\xR)), \partial_tW \in L^\infty([0,T], W^{1,\infty}(\xR)) 
 \quad (\text{Lemma \protect{\ref{defW}}}),\\ 
 \tag{ii}&W_h^\delta= S_{[\delta (j-3)]}(W)
\text{ satisfies }\lA \partial^\alpha_x W_h^\delta \rA_{L^\infty_{t,x}} 
\leq C_\alpha \lA \partial^\alpha_x W \rA_{L^\infty_{t,x}},\\
\tag{iii} & a(\xi)=\chi_{0}(\xi)\la\xi\ra^{\tdm},
\end{align*}
\begin{defi}
For  small $h_0,\tau_0$  to be fixed, we introduce the sets
 \begin{equation*}
 \begin{aligned}
 \Omega &=\left\{(t,x,\xi,h)\in \xR^4\, :\, h\in (0, h_0), |t| < \tau_0, 1<\la \xi\ra < 3\right\}\\
 \mathcal{O}&=\left\{(\s,x,\xi,h)\in \xR^4\, :\, h\in (0, h_0), |\s| <  \tau_0 h^{-\mez}, 1<\la \xi \ra < 3 \right\}.
\end{aligned}
\end{equation*}
If
$m\in \xR $ and $\varrho\in \mathbb{R}^+$,  we denote by $S^m_{\varrho}(\Omega)$ (resp.$ S^m_{\varrho}(  \mathcal{O})$) the set of all 
functions $f$ on $  \Omega$ (resp.$\mathcal{O}$) which are $C^\infty$ with respect to 
$(t,x,\xi)$ (resp.$(\s,x,\xi)$ and satisfy the estimate
\begin{equation}
\la \partial_x^\alpha f(t,x,\xi,h)\ra (resp.
\la \partial_x^\alpha f(\s,x,\xi,h)\ra )
\le C_{\alpha}  h^{m-\varrho\alpha}, 
\end{equation}
for all $(t,x,\xi,h)\in \Omega$ (resp.$(\s,x,\xi,h)\in \mathcal{O})$.
\end{defi}
\subsection{The eikonal and transport equations}
In all this section we keep the notations of \eqref{tsigma},\eqref{C2}, and \eqref{unpeudeL} to \eqref{Anouveau}.

The main result  is the following.
\begin{prop}\label{eikampl2}
There exist a phase $\Phi$ of the form 
$$\Phi(t,x,\xi,h)= x \xi - h^{-\mez}ta(\xi)+h^{\mez}\Psi(t,x,\xi,h)$$
 with $\partial_x \Psi \in S^0_{\delta}(\Omega)$ and an amplitude $B  \in  S^{0}_\delta  \Omega)$ such that, with $\widetilde B$ defined in \eqref{amplitude2},
 \begin{equation}
 \mathcal{L}_0 \left( e^{\frac{i}{h}\Phi}\widetilde{B}  \right)= e^{\frac{i}{h}\Phi}R_h.
\end{equation}
    and for all $N\in \xN$ we have,
\begin{equation}\label{estReste2}
\Big\Vert \iint e^{\frac{i}{h}(\Phi(t,x,\xi,h) - z\xi)}R_h(t,x,z,\xi,h)u_{0,h}(z)\,dz\, d\xi \Big\Vert_{ 
H^1(\xR_x)} \leq C_Nh^N \Vert u_{0,h} \Vert_{L^1(\xR)},
\end{equation}
for all t in $[0, \tau_0]$.
 \end{prop}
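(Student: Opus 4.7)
The plan is to mirror the construction of Proposition~\ref{eikampl}, working in semiclassical time $\sigma = th^{-1/2}$, but with the crucial modification that the transport term $h^{1/2}V_h\partial_x$ now enters the eikonal equation rather than being treated as a perturbation. Indeed, over times $|\sigma|\le \tau_0 h^{-1/2}$, the effect of this term accumulates to $O(1)$ and generates non-trivial oscillations that must be captured by the phase. This is what the authors mean when they write ``the new eikonal equation will be quasi-linear''.

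I would write $\varphi=x\xi-\sigma a(\xi)+h^{1/2}\psi(\sigma,x,\xi,h)$ and compute $e^{-i\varphi/h}\mathcal{L}(e^{i\varphi/h}\widetilde b)$ exactly as in \eqref{unpeudeL}--\eqref{C14}. Setting the $O(h^{1/2})$ part to zero (and keeping the $V_h \partial_x\psi$ and $a''(\xi)(\partial_x\psi)^2$ terms that were previously negligible) leads to a quasilinear Hamilton--Jacobi equation of the form
\begin{equation*}
\partial_\sigma\psi + a'(\xi)\partial_x\psi + V_h\bigl(\xi+h^{1/2}\partial_x\psi\bigr)+\frac{h^{1/2}}{2}a''(\xi)(\partial_x\psi)^2=0,\qquad \psi|_{\sigma=0}=0,
\end{equation*}
up to corrections of order $h$ that will be absorbed in the transport equation. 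I would solve this by the method of characteristics. The bicharacteristic ODE has coefficients with the regularity of $W_h^\delta\in S^0_\delta(\mathcal{O})$ (Lemma~\ref{defW} combined with the mollifier estimate), hence standard ODE theory produces a local-in-$\sigma$ smooth flow, and the phase $\psi$ is then recovered along the characteristics.

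The main obstacle, and the whole reason one must restrict to $|\sigma|\le \tau_0 h^{-1/2}$ with $\tau_0$ small, is controlling the derivatives of $\psi$ uniformly in $h$ over this long time interval. Differentiating the eikonal equation repeatedly in $x$ and $\xi$, the source terms generated by $V_h$ obey $\|\partial^\alpha_x V_h\|_{L^\infty}\lesssim h^{-\alpha\delta}$, while the quadratic nonlinearity $h^{1/2}(\partial_x\psi)^2$ is initially under control because of the $h^{1/2}$ prefactor. A bootstrap/Gr\"onwall argument along the characteristic flow then yields $\partial_x\psi\in S^0_\delta(\Omega)$ uniformly for $|\sigma|\le \tau_0 h^{-1/2}$, provided $\tau_0$ is chosen small enough to absorb the constants in the Gr\"onwall exponential. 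This is the heart of the argument and the place where the smoothness threshold $s>11/2$ (which forces $\delta<1/4$) plays its role.

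Once the phase is constructed, the amplitude $B=\sum_{j<J}h^{j\mu}B_j$ and the remainder estimate \eqref{estReste2} follow the pattern of Proposition~\ref{eikampl}. One derives transport equations for the $B_j$ along the same characteristics and solves them by a variant of Lemma~\ref{estimations}, obtaining $B_j\in S^0_\delta(\Omega)$. The remainder $R_h$ splits into contributions coming from (i) the cut-off $\zeta$ in \eqref{amplitude2}, handled by integrating by parts with the vector field $L=(h/i)(\partial_\xi\varphi-z)^{-1}\partial_\xi$ on the support of $\zeta'$ where $|x-z-\sigma a'(\xi)|\ge 1$ and $|\partial_\xi\varphi-z|\gtrsim 1$; (ii) the high-order terms in the expansion of $a(hD_x)(e^{i\varphi/h}\widetilde B)$, treated exactly as for $r_1,r_2$ in \eqref{C9}--\eqref{C10}; and (iii) the tail $h^{J\mu}B_{J-1}\zeta$, which gains an arbitrarily large power of $h$ by choosing $J$ large. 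Each contribution produces an oscillatory integral whose kernel is controlled in $L^1_zL^\infty_x$ by $C_Nh^N$, and \eqref{estReste2} follows by Young's inequality and an extra $\partial_x$ differentiation, as at the end of the proof of Proposition~\ref{eikampl}.
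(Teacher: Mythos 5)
Your plan follows the paper's architecture closely: the quasi-linear eikonal equation \eqref{eikonal} (your Taylor-truncated version differs from the paper's only by $O(h)$ corrections that go into the amplitude), the method of characteristics, the smallness of $\tau_0$, and the integration-by-parts treatment of the remainders $r_1,\dots,r_5$. Two points deserve attention.

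First, the amplitude ansatz. You propose the standard asymptotic series $B = \sum_j h^{j\mu}B_j$, whereas the paper writes $b = \chi_1(\xi)\,e^{\widetilde\theta}$ with $\widetilde\theta = \sum_{j\leq M}h^{j\mu_0}\theta_j$ (see \eqref{c3}). Over times $\s\sim h^{-\mez}$ the real zeroth-order coefficient $h^{\mez}\{(\partial_x^2\psi)\,a''(\cdot) - \tfrac12\partial_x W^\delta_h\}$ accumulates to an $O(1)$ factor; the exponential ansatz captures this as $\theta_0$ and leaves the transport equations \eqref{c111} with no zeroth-order term, so Corollary~\ref{LU=F} applies directly and the Faa-di-Bruno identities \eqref{c4}--\eqref{c9} bookkeep the nonlinear source terms $A_k$ cleanly. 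With your series, $B_0$ must itself carry an exponential factor, and bounding $\Lambda^k B_0$ means chasing iterated commutators with a non-negligible multiplication operator; this can probably be done, but it is noticeably more cumbersome, which is exactly what the exponential ansatz is designed to avoid. Also, $\delta<1/4$ (i.e.\ $\mu_0 = \mez - 2\delta>0$) enters precisely in \eqref{c7}, where the $k\ge 2$ terms $A_k$ must gain a positive power $h^{\mu_0}$ at each level of the hierarchy; it is not used in the eikonal Gr\"onwall as you suggest.

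Second, and more seriously, you pass over the global invertibility of the flow $x\mapsto X(s,x)$ for $|s|\leq \tau_0 h^{-\mez}$. A direct estimate of $\partial Z/\partial x$ from the leading term $I_1 = -\xi\int_0^s(\partial^2_x W^\delta_h)(\partial X/\partial x)\,d\s$ of \eqref{derZ} loses a factor $s\sim h^{-\mez}$, which then destroys the bound on $\partial X/\partial x - 1$. The paper's fix is the integration-by-parts identity \eqref{astuce}, which rewrites $\partial_x W^\delta_h$ along the flow as a $\partial_\s$-derivative divided by $\dot X$; combined with the lower bound on $\dot X$ from \eqref{Xpoint} and $\ddot X = O(h^{\mez})$ from \eqref{Xpp}, this turns the would-be divergent time integral into a bounded one, at the cost of needing $\partial_t W\in L^\infty$ (Lemma~\ref{defW}, part 2, where $s>4$ is used). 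This is what yields $|\partial_x X -1|\leq \mez$ in \eqref{der2X} and hence the global diffeomorphism. Your appeal to ``standard ODE theory'' and a generic ``bootstrap/Gr\"onwall argument'' is precisely where the heart of the proof actually lives, and it does not go through without this trick.
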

 \begin{proof}
   According to \eqref {Anouveau} we have,
\begin{equation*}
\left\{
\begin{aligned}
A_0&= a(x,\xi +h^\mez\partial_x \psi)b,\\
A_1&=\frac{h}{i}a(x,\xi +h^\mez \partial_x \psi) \partial_x b +\frac{h^{\frac 3 2}}{i}(\partial_x^2 \psi)a'(x,\xi +h^\mez\partial_x \psi)b.
\end{aligned}
\right.
\end{equation*}
We  deduce  that $\mathcal{L}_0(e^{{\frac i h} \varphi} \widetilde{b}) = e^{{\frac i h} \varphi} r$ with
\begin{equation}\label{r=bis}
\begin{aligned}
r &= i \left\{ h^\mez\partial_\s \psi - a(\xi) + a(x,\xi +h^\mez \partial_x \psi) + h W^\delta_h \partial_x \psi + h^\mez \xi W^\delta_h \right\}b\,\zeta \\
 &+h \Big \{ \partial_ \s b + a(x,\xi +h^\mez \partial_x \psi) \partial_x b+ h^ \mez W^\delta _h \partial_x b  + \frac{1}{2}h^ \mez (\partial_x W^\delta_h) b \\
 &+ h^\mez (\partial_x^2 \psi)a'(x,\xi +h^\mez \partial_x \psi)b + \frac {i}{h} \sum _{k=2}^{M-1} A_k \Big \} \zeta +i \sum_{j=1}^4 r_j,
 \end{aligned}
\end{equation}
where $r_1,r_2 $ are defined in \eqref{C9}, \eqref{C10}, $r_3$ in \eqref{Anouveau} and
\begin{equation}\label{r4bis}
r_4= \frac{h}{ i} \left\{-a'(\xi)+h^\mez W^\delta_h \right\}b\, \zeta'.
\end{equation}

\subsubsection{The eikonal equation}
As already mentioned, an 
important point in the construction of the phase function is that handling 
large times leads us to {\em non linear geometric optics}. 
Namely, we determine $\psi$ by solving the following nonlinear problem,
\begin{equation}\label{eikonal}
\left\{
\begin{aligned}
&\partial_\s \psi +\frac{a(\xi+{h^\mez}\partial_x \psi)-a(\xi)}{h^\mez}   + {h^\mez} W_h^\delta({h^\mez} \s,x) \partial_x\psi = - W_h^\delta({h^\mez} \s,x)\xi,\\
&\psi(0,x,\xi,h)=0.
\end{aligned}
\right.
\end{equation}
In this system, $\xi$ and $h$ are seen as parameters. 
We begin by establishing that the solutions exist for a time interval of size $h^{-\mez}$ and 
satisfy some uniform estimates.
\begin{prop}\label{eiko}
There exists $\tau_0 >0$ such that the problem \eqref{eikonal} has a 
unique $C^\infty$ solution $\psi$ in the set $ \mathcal{O}$ 
such that $\partial_x \psi$ and $\partial^2_x \psi$ are uniformly 
bounded  on  $ \mathcal{O} $  by 
$ C\big( \Vert (\eta, \psi) \Vert_{L^\infty (I, H^{s+ \mez}(\xR) \times H^{s}(\xR))}\big)$ 
where $C$ is an increasing function from $\xR^+$ to itself.
\end{prop}
\begin{proof}
Let us differentiate the equation \eqref{eikonal} with respect to $x$ and let us set $\psi_1= \partial_x \psi.$ Then  $\psi_1$ is solution of the quasi-linear equation
\begin{equation}\label{psi1}
\partial_\s \psi_1 + A(\s,x,h,\xi,\psi_1)\partial_x \psi_1 = B(\s,x,h,\xi,\psi_1),\quad \psi_1(0,x,h,\xi)=0.
\end{equation}
where 
\begin{equation}\label{A,B}
\left\{
\begin{aligned}
&A(\s,x,h,\xi,z)= a'(\xi + h^\mez z) +h^\mez W_h^\delta( h^\mez\s ,x),\\
&B(\s,x,h,\xi,z)=-h^\mez (\partial_xW_h^\delta)( h^\mez\s ,x)z - \xi(\partial_xW_h^\delta) ( h^\mez\s ,x).
\end{aligned}
\right.
\end{equation}
We shall solve \eqref{psi1} by the method of characteristics.\\
The characteristics are given by the system
\begin{equation}\label{car}
\left\{
\begin{aligned}
&\dot{\s}(s)=1,\quad \s(0)=0,\\
&\dot{X}(s)= A(s,X(s),h,\xi,Z(s)), \quad X(0)=x,\\
&\dot{Z}(s)=B(s,X(s),h,\xi,Z(s)), \quad Z(0)=0.
\end{aligned}
\right.
\end{equation}
Since $A$ is uniformly bounded and $\vert B \vert \leq C_1 +C_2 \vert z\vert,$ the 
above system has a unique  global solution (i.e defined for $ s \in [0, + \infty[$).

\subsubsection{Properties of the flow.}
$(i)$ We have,
\begin{equation}\label{Xpoint}
 \exists \tau_0>0,c_1>0,c_2>0 :   c_1 \leq \vert \dot{X}(s)\vert \leq c_2, \quad  0 \leq s \leq \tau_0 h^{-\mez}.
\end{equation}
We show first that
\begin{equation}\label{Z1}
h^{\mez} \vert Z(s) \vert \leq C\tau_0\Vert \partial_x W \Vert_{L^\infty} \exp\big( \tau_0 \Vert \partial_x W \Vert_{L^\infty}\big),\quad 0 \leq s \leq \tau_0 h^{-\mez}.
\end{equation}
To see this we integrate the equation satisfied by $Z$ and use \eqref{A,B}. We obtain
\begin{equation}
\vert Z(s) \vert \leq C \Vert \partial_xW\Vert_{L^\infty_{t,x}} \vert s \vert 
+ h^\mez \Vert \partial_xW \Vert_{L^\infty_{t,x}} \int_0^s \vert Z(\s) \vert d\s, \quad 0 
\leq s \leq \tau_0 h^{-\mez}.
\end{equation}
Then \eqref{Z1} follows from the Gronwall inequality.

On the other hand, setting $m(s) = (s,X(s),h,\xi,Z(s)),$ we have
\begin{align*} 
A(m(s)) &= a'(\xi) + h^ \mez Z(s) \int_0^1a''(\xi + \lambda h^\mez Z(s))d \lambda 
+ h^\mez W_h^\delta( h^\mez \s ,X(s))d\s\\
& := a'(\xi) +R
\end{align*}
where 
\begin{equation*}
\vert R \vert \leq \tau_0 C(\tau_0, \Vert \partial_x W \Vert_{L^\infty_{t,x}}) \Vert a'' \Vert_{L^\infty} 
+ h^\mez \Vert W \Vert_{L^\infty_{t,x}})
\end{equation*}
Since for $1/2 \leq \vert \xi \vert \leq 3$ we have $ \vert a'(\xi)\vert \geq 2c_1>0$ 
we obtain 
$$
\vert A(s,X(s),h,\xi,Z(s)) \vert \geq c_1
$$ when $0 \leq s \leq \tau_0 h^{-\mez}$, $(\tau_0$ and $h$  small enough). This proves \eqref{Xpoint}.

$(ii)$ We have,
\begin{equation}\label{Xpp}
\vert \ddot{X}(s) \vert \leq h^\mez C \Big(\Vert \partial_x W \Vert_{L^\infty_{t,x}}+\Vert \partial_t W \Vert_{L^\infty_{t,x}} \Big),\quad 0 \leq s \leq \tau_0 h^{-\mez}.
\end{equation}
Indeed let us set $m(s) = (s,X(s),h,\xi,Z(s)).$Then we have,
\begin{equation*}
\ddot{X}(s) =(\partial_s A)(m(s)) + (\partial_x A)(m(s)) \dot{X}(s) +( \partial_z A)(m(s))\dot{Z}(s).
\end{equation*}
Moreover we have,
\begin{align*}
&(\partial_s A)(m(s))= h(\partial_s W^\delta_h)(\s h^\mez,x), 
\quad (\partial_x A)(m(s))= h^\mez(\partial_x W^\delta_h)(\s h^\mez,x)\\
&  (\partial_z A)(m(s))= a''(\xi + h^\mez Z(s)).
\end{align*}
 Then \eqref{Xpp} follows from the expressions of 
 $\dot{X}(s), \dot{Z}(s)$ and \eqref{Z1}.
 
 $(iii)$ We improve now \eqref{Z1}. We have,
 \begin{equation}\label{Z2}
 \vert Z(s) \leq  C\Big(\sum_{\vert \alpha \vert \leq 1} \Vert \partial^\alpha W \Vert_{L^\infty_{t,x}}\Big), \quad \partial =(\partial_t, \partial_x).
 \end{equation}
Indeed we can write
\begin{equation}\label{Z3}
Z(s) = - \xi \int_0^s(\partial_xW_h^\delta)(\s h^\mez, X(\s)) d\s - h^\mez \int_0^s(\partial_xW_h^\delta)(\s h^\mez, X(\s))Z(\s) d\s
  \end{equation} 
Now, using \eqref{Xpoint} we have,
\begin{equation}\label{astuce}
(\partial_x W_h^\delta)(\s h^\mez, X(\s)) = 
\frac{\partial_\s [W_h^\delta(\s h^\mez, X(\s))]}{\dot{X}(\s)} 
-  \frac{h^\mez (\partial_s W_h^\delta)(\s h^\mez, X(\s))}{\dot{X}(\s)}.
\end{equation}
After an integration by parts we obtain,
\begin{align*}
I &=: \int^s_0 (\partial_x W_h^\delta)(\s h^\mez, X(\s)) d\s 
= \frac{W_h^\delta(sh^\mez,X(s))}{\dot{X}(s)} -  \frac{W_h^\delta(0,X(0))}{\dot{X}(0)}\\
&+ \int_0^s \frac{\ddot {X}(\s)}{(\dot {X}(\s))^2} W_h^\delta(\s h^\mez, X(\s)) d\s 
- h^\mez \int_0^s \frac{1}{\dot {X}(\s)} (\partial_sW_h^\delta)(\s h^\mez, X(\s)) d\s.
\end{align*}
Using \eqref{Xpoint},\eqref{Xpp} we deduce that for $ 0 \leq s \leq \tau_0 h^{-\mez}$ we have,
\begin{equation*}
\vert I \vert \leq C\Big(\sum_{\vert \alpha \vert \leq 1} 
\Vert \partial^\alpha W \Vert_{L^\infty_{t,x}}\Big) \quad \partial =(\partial_s,\partial_x).
\end{equation*}
It follows  from \eqref{Z3} that,
\begin{equation*}
\vert Z(s) \vert \leq C\Big(\sum_{\vert \alpha \vert \leq 1} 
\Vert \partial^\alpha W \Vert_{L^\infty_{t,x}}\Big) + 
\Vert \partial_x W_h^\delta \Vert_{L^\infty_{t,x}}h^\mez \int_0^s \vert Z(\s) d\s,
\end{equation*}
which using Gronwall inequality proves \eqref{Z2}.

We are going now to give some estimates on the $x$-derivative of the flow.

We claim that,
\begin{equation}\label{der1X}
\vert \frac{\partial X}{\partial x} (s) \vert +\vert \frac{\partial Z}{\partial x} (s) \vert \leq 
C\Big(\lA \partial_t W\rA_{W^{1,\infty}}+\lA W\rA_{W^{2,\infty}}
\Big) \quad 0 \leq s \leq \tau_0 h^{-\mez}
\end{equation}
\begin{equation}\label{der2X}
\vert \frac{\partial X}{\partial x} (s) - 1 \vert \leq \frac{1}{2}, \quad 0 \leq s \leq \tau_0 h^{-\mez},
\end{equation}
if $ \tau_0 C\Big( \lA \partial_t W\rA_{W^{1,\infty}}+\lA W\rA_{W^{2,\infty}})\Big) $ is small enough.

Indeed using \eqref{car} we can write,
\begin{align*}
&\dot{ \frac{\partial X}{\partial x}} (s) = 
h^\mez (\partial_x W_h^\delta)(s h^\mez, X(s))\frac{\partial X}{\partial x}(s)+
h^\mez a''(\xi + h^\mez Z(s)) \frac{\partial Z}{\partial x}(s), \\
&\dot{\frac{\partial Z}{\partial x}}(s)= -\xi(\partial^2_x W_h^\delta)(s h^\mez, X(s))\frac{\partial X}{\partial x}(s)
-h^\mez (\partial^2_x W_h^\delta)(s h^\mez, X(s))\frac{\partial X}{\partial x} (s) Z(s) \\
&-h^\mez (\partial_x W_h^\delta)(s h^\mez, X(s))\frac{\partial Z}{\partial x}.
\end{align*}
From the first equation we deduce
\begin{equation}\label{der3X}
\vert \frac{\partial X}{\partial x} (s) \vert \leq 1+ h^\mez \Vert (\partial_x W_h^\delta) 
\Vert_{L^\infty_{t,x}} \int _{0}^s \vert \frac{\partial X}{\partial x} (\s) \vert d\s 
+ h^\mez \Vert a'' \Vert_{L^\infty} \int_{0}^s \vert \frac{\partial Z}{\partial x} (\s) \vert d\s.
\end{equation}
From the second equation we deduce,
\begin{equation}\label{derZ}
\left\{
\begin{aligned}
&\frac{\partial Z}{\partial x} (s) = I_1=I_2+I_3 \quad \text{where},\\
& I_1=- \xi \int^s_0 (\partial^2_x W_h^\delta)(\s h^\mez, X(\s)) \frac{\partial X}{\partial x} (\s)d\s, \\
&I_2= -h^\mez \int^s_0 (\partial^2_x W_h^\delta)(\s h^\mez, X(\s)) \frac{\partial X}{\partial x} Z(\s)d\s, \\
&I_3= -h^\mez \int^s_0 (\partial_x W_h^\delta)(\s h^\mez, X(\s)) \frac{\partial Z}{\partial x}d\s .
\end{aligned}
\right.
\end{equation}
We have easily,
\begin{equation}
\vert I_3 \vert \leq h^\mez \Vert \partial_x W\Vert_{L^\infty_{t,x}} 
\int_0^s \vert \frac{\partial Z}{\partial x}(\s) \vert d\s
\end{equation}
Moreover using \eqref{Z2} we get,
\begin{equation}
\vert I_2 \vert \leq h^\mez C\Big(\sum_{\vert \alpha \vert \leq 1} 
\Vert \partial^\alpha W \Vert_{L^\infty_{t,x}}\Big) \Vert\partial_x^2 W 
 \Vert_{L^\infty_{t,x}}  \int_0^s \vert \frac{\partial X}{\partial x}(\s) \vert d\s.
\end{equation}
We are left with the estimate of $I_1$. We use \eqref{astuce} applied to 
$\partial_x W_h^\delta.$We obtain
\begin{align}
I_1=  &-\xi \int_0^s
\frac{\partial_\s [\partial_xW_h^\delta(\s h^\mez, X(\s))]}{\dot{X}(\s)} \frac{\partial X}{\partial x}(\s)d\s \\
&- Êh^\mez \xi \int_0^s 
\frac{ (\partial_x\partial_s W_h^\delta)(\s h^\mez, X(\s))}{\dot{X}(\s)}\frac{\partial X}{\partial x}(\s)d\s =: A + B.
\end{align}
We see easily that
\begin{equation}
\vert B \vert \leq Ch^\mez \Vert \partial_s \partial_x W 
\Vert_{L^\infty_{t,x}} \int_0^s \vert \frac{\partial X}{\partial x}(\s) \vert d\s.
\end{equation}
Let us consider the term $A$. After an integration by parts one can write,
\begin{align*}
A &= - \xi \big( 
\frac{ (\partial_x W_h^\delta)(s h^\mez, X(s))}{\dot{X}(s)}\frac{\partial X}{\partial x}(s) 
-\frac{ (\partial_x W_h^\delta)( 0, x)}{\dot{X}(0)}\\
&\quad+\xi\int_0^s (\partial_xW_h^\delta)(\s h^\mez, X(\s))
\dot{ \frac{\partial X}{\partial x}} (s)\frac{d \s}{\dot{X}(\s)} \\ 
&\quad-  \xi \int_0^s (\partial_xW_h^\delta)(\s h^\mez, X(\s))
\frac{\ddot{X}(\s)}{(\dot{X}(\s))^2}\frac{\partial X}{\partial x}(\s)d\s.
\end{align*}
Using \eqref{der2X}, \eqref{Xpoint}, \eqref{Xpp} and the equation 
satisfied by $\frac{\partial X}{\partial x}$ we obtain
\begin{equation}\label{der4X}
\vert A\vert \leq C\Big(\Vert \partial_x W \Vert_{L^\infty_{t,x}} \Big) 
\Big(1+ h^\mez \int_0^s (\vert \frac{\partial X}{\partial x}(\s) \vert 
+ \vert \frac{\partial Z}{\partial x}(\s) \vert) d\s\Big).
\end{equation}
Using \eqref{derZ} to \eqref{der4X} we obtain
\begin{align*}
\vert \frac{\partial Z}{\partial x} (s)\vert &\leq  
C\big( \Vert \partial_x W  \Vert_{L^\infty_{t,x}}\big) \\
&\quad+     
C\left( \lA \partial_t W\rA_{L^\infty_{t}W^{1,\infty}_x}+\lA W\rA_{L^\infty_{t}W^{2,\infty}_x}\right) 
\int_0^s 
\Big(\la \frac{\partial X}{\partial x} (\s) \ra +\la \frac{\partial Z}{\partial x} (\s)\ra \Big) d\s
\end{align*}
so using \eqref{der3X} and the Gronwall inequality we obtain \eqref{der1X}.

Then coming back to the equation satisfied by  $\frac{\partial X}{\partial x}$ we deduce
\begin{equation*}
\la \frac{\partial X}{\partial x}(s) - 1 \ra \leq 
C\left( \lA \partial_t W\rA_{L^\infty_{t}W^{1,\infty}_x}+\lA W\rA_{L^\infty_{t}W^{2,\infty}_x}\right) h^\mez \vert s \vert
\end{equation*}
for $ 0 \leq s \leq \tau_0 h^{-\mez}.$ Therefore taking $\tau_0$ small enough 
we obtain \eqref{der2X}.
\subsubsection{Resolution of the eikonal equation.}
We claim now that for all  $s$ in  $[0, \tau_0 h^{-\mez}]$ the map 
$x \to X(s,x$) is a global diffeomorphism from $\xR$ to $\xR.$ 
Indeed we have for such fixed $s$,
\begin{equation*}
X(s,x) = x + \int_0^s A(\s,X(\s),h,\xi,Z(s))d\s.
\end{equation*}
Since $A$ is bounded by $\Vert a' \Vert_{L^\infty} 
+ h^\mez \Vert W\Vert_{L^\infty_{t,x}} $ we have 
$ \lim_{\vert x\vert \to + \infty} \vert X(s,x) \vert = + \infty.$ 
Moreover by \eqref{der2X} we have, $\frac{\partial X}{\partial x}(s,x) \neq 0$ 
for all $ 0 \leq s \leq \tau_0 h^{-\mez}$ and all $x\in \xR$. Therefore our claim 
follows from a well known result by Hadamard. Then
\begin{equation}\label{XY}
X(s,x) = y \Leftrightarrow x = Y(s,x), \quad x,y \in \xR,
\end{equation}
and the function $(s,y) \to Y(s,y)$ is $C^\infty$ by the implicit function theorem. 
Let us consider then the set
$$ S = \{(s,X(s,x),Z(s,x), \,  0 \leq s \leq \tau_0 h^{-\mez}, x \in \xR\}.$$
It follows from \eqref{XY} that $S$ is a submanifold of $\xR^3$ of dimension 
two to which the vector field  $ L = \frac{\partial}{\partial \s} 
+ A(\s,x,h,\xi,z)\frac{\partial}{\partial x} + B(\s,x,h,\xi,z)\frac{\partial}{\partial z}$ 
is tangent. It follows then from \eqref{XY}  that the function $ \psi_1(s,y,h,\xi) = 
Z(s, Y(s,y))$ is the solution of our eikonal equation \eqref{psi1}. Then $\psi_1 \in L^\infty.$ 
Moreover we have 
$$ \frac{\partial \psi_1}{\partial y} = \frac{\partial Z}{\partial x}(s, Y(s,y))
\frac{\partial Y}{\partial y}(s,y)= \frac{\partial Z}{\partial x}(s, Y(s,y))(\frac{\partial X}{\partial x}(s,Y(s,y)))^{-1},
$$
so, since $\frac{\partial Z}{\partial x}$ is bounded and from \eqref{der2X} we deduce, 
$$ \vert \frac{\partial \psi_1}{\partial y}\vert \leq 
C\Big( \lA \partial_t W\rA_{W^{1,\infty}}+\lA W\rA_{W^{2,\infty}}\Big).$$
It follows that the solution $\psi$ of our eikonal equation \eqref{eikonal} is such that
$$\frac{\partial \psi}{\partial x}\in L^\infty_{t,x}\,,
\quad \frac{\partial^2 \psi}{\partial x^2}\in L^\infty_{t,x} \, ,
$$
uniformly in $h,\xi.$
\end{proof}

\subsubsection{Properties of the solution.}
We investigate in this section futher regularity of the solution $\psi.$
\begin{prop}\label{eikS_0}
 Let $\psi$ be the  solution of 
$\eqref{eikonal}$ given by Proposition $ \ref{eiko}$ Then we have
$\psi \in L^\infty (\mathcal{O}), \partial_x \psi \in L^\infty (\mathcal{O}), 
\partial^2_x \psi \in S^0_\delta (\mathcal{O}).$
\end{prop}
\begin{proof}
The first two claims have been proved in Proposition $\ref{eiko}$, 
let us consider the third one. We shall prove that $\partial_x \psi_1 \in S^0_\delta (\mathcal{O})$ 
where $\psi_1 = \partial_x \psi.$ Let us set for $\frac{1}{2} \leq \vert \xi \vert \leq 2,$
\begin{equation}
v(\s, x, \xi, h) = \partial_x \psi_1 (\s, x, \xi, h) - \frac {\xi}{a'(\xi)} \partial_x W^\delta_h (\s h^\mez,x).
\end{equation}
Then according to $\eqref{psi1}$ the function $v$ is solution of the equation,
\begin{equation}\label{eqv}
\begin{aligned}
\partial_\s v + (a'(\xi + h^\mez \psi_1) +& h^\mez W_h^\delta) \partial_x v 
+h^\mez a''(\xi + h^\mez \psi_1) v^2\\ +&(2h^\mez a''(\xi + h^\mez \psi_1) 
\frac {\xi}{a'(\xi)}\partial_x W_h^\delta + 2 h^\mez \partial_x W_h^\delta) v = f,
\end{aligned}
\end{equation}
where,
\begin{equation}
\begin{aligned}
f = &-  \frac {\xi}{a'(\xi)}h^\mez \partial_x \partial_t W_h^\delta + \frac {\xi}{a'(\xi)} 
\partial_x^2 W_h^\delta \big(a'(\xi + h^\mez \psi_1) -a'( \xi)\big ) \\
 &- h^\mez  \frac {\xi}{a'(\xi)} W_h^\delta  \partial_x^2 W_h^\delta
+ h^\mez ( \frac {\xi}{a'(\xi)})^2 a''(\xi + h^\mez \psi_1) (\partial_x W_h ^\delta)^2 \\
&+ 2 h^\mez\frac {\xi}{a'(\xi)} ( \partial_x W_h^\delta)^2 - h^\mez (\partial_x^2 W_h^\delta) \psi_1.
\end{aligned} 
\end{equation}
Let us set $ \Lambda = h^\delta \partial_x.$We shall prove by 
induction on $k \in \xN$ that,
\begin{equation}\label{rec}
\Lambda ^j v \in L^\infty (\mathcal{O}) , \quad 0 \leq j \leq k.
\end{equation}
This will imply our claim since $\partial_x W_h ^\delta \in S^0_\delta (\mathcal{O}).$
 
Notice that \eqref{rec} is true for $k = 0$ by Proposition $\ref{eiko}$. Assume 
it is true up to the order $k -1$. 
It follows then, using the Faa-di Bruno formula that,
\begin{equation}\label{FDB}
\Lambda^l [b(h^\mez \psi_1)] \in h^{\mez + \delta}L^\infty (\mathcal{O}), \quad 1 \leq l \leq k,
\end{equation}
for any $C^\infty$-bounded function $b$ from $\xR$ to $\xR$.
  
Applying the operator $\Lambda^k$ to both sides of $\eqref{eqv}$ and using $\eqref{FDB}$ 
and the fact that $\partial^\alpha W_h^\delta \in S^0_\delta (\mathcal{O})$ for 
$\partial = (\partial_t,\partial_x), \vert \alpha \vert \leq 2, \alpha \neq (2,0)$ 
we find that $\Lambda^k v $ is solution of the problem
\begin{equation}
(\partial_\s  + a'(\xi+h^\mez \psi_1)\partial_x  +  h^\mez W_h^\delta \partial_x 
+ h^\mez d(\s, x , \xi, h)) \Lambda^k v \in h^\mez L^\infty(\mathcal{O}),
\end{equation}
where $d \in L^\infty(\mathcal{O}_\eps)$. 
  
Let us set $\tilde{v}_k = (\Lambda^kv)(\s, x+ a'(\xi)\s).$ Then $\tilde{v}_k$ is solution of the problem,
$$  \Big (\partial_\s  + h^\mez 
\int_0^1 a''(\xi+\lambda h^\mez \tilde{\psi_1}) \,d\lambda \,\tilde{\psi_1} \partial_x 
+h^\mez \tilde{W}_h^\delta \partial_x + h^\mez \tilde{d}(\s, x , \xi, h)\Big)
\tilde{v}_k \in h^\mez L^\infty(\mathcal{O}),
$$

Then the desired conclusion follows from the following Lemma.
\renewcommand{\qedsymbol}{}
\end{proof}
\begin{lemm}\label{lb}
  Let $c_1,c_2$ be two 
real valued functions such that 
$c_1, \partial_x c_1, c_2 $ belong to $L^\infty (\mathcal{O})$ 
and $P=\partial_\s +h^\mez c_1(\s,x,\xi,h)\partial_x +h^\mez c_2(\s,x,\xi,h)$. 
Then for any $F \in L^\infty(\mathcal{O})$, the problem 
$$
Pu = F,\quad u\arrowvert_{\s=0}=0,
$$
has a unique solution $u$ which satisfies the estimate
\begin{equation*}
\vert u(\s,x,\xi,h) \vert \leq C \int_0^\s  \Vert F(s,\cdot,\xi,h) \Vert_{L^\infty (\xR)}\, ds,
\end{equation*}
for all $(\s,x,\xi,h)$ in $\mathcal{O}$, uniformly in $h$.
\end{lemm}
\begin{proof}[Proof of Lemma~\ref{lb}] Let us set 
$t=\s h^\mez$ and $\widetilde{c}_j(t,x)=c_j(h^{-\mez}t,x)$, $j=1,2$. 
Then we are led to the problem
$$
\widetilde P \widetilde u =h^\mez \widetilde F,\quad \widetilde u\arrowvert_{\s=0}=0 \qquad 
(t\in [0,\tau_0],x\in\xR),
$$
where $\widetilde P=\partial_t + \widetilde c_1(t,x)\partial_x + \widetilde c_2(t,x)$. Recall that 
$\widetilde c_1\in L^\infty$, $\partial_x \widetilde c_1 \in L^\infty$, $\widetilde c_2\in L^\infty$. 
Then the claim of the lemma follows from the classical method of characteristics. 
Indeed, the characteristics are given by 
$t(s)=s$ and
$$
\dot{X}(s,x)=\widetilde c_1(s,X(s,x)),\quad X(0,x)=x.
$$
Then $x\mapsto X(t,x)$ is globally invertible for each $t\in [0,\tau_0]$ i.e. $X(t,x)= y \Leftrightarrow x = Y(t,y)$ with $Y \in C^0\cap L^\infty$. Then
$$
\frac{d}{dt}\left[\widetilde{u}(t,X(t,x))\right]=\widetilde{c}_2(t,X(t,x))u(t,X(t,x))+F(t,X(t,x)).
$$
Therefore $\widetilde{u}$ given by
$$
\widetilde{u}(t,y)=\exp\left(-\int_0^t \widetilde{c}_2(t',X(t',Y(t,y))\, dt' \right)
\int_0^t F(t',X(t',Y(t,y))\, dt'
$$
is our solution.
\end{proof}
\begin{coro}\label{LU=F}
Let $\psi$ be defined by Proposition \ref{eiko} and $L$ be the operator
\begin{equation}
L = \partial_\s + a'(\xi + h^\mez \partial_x \psi) \partial_x + h^\mez d_1\partial_x + h^\mez d_2,
\end{equation}
 where $d_1, \partial_x d_1,d_2$ are real valued and belong to  $S^0_\delta (\mathcal{O})$. \\
 Then for any  $F$  such that $ \Vert \Lambda^j F  \Vert_{L^\infty (\mathcal{O})}$ is finite for every $ j \in \xN$,  the problem
$$Lu =F, \quad u\arrowvert_{\s = 0} = 0,$$
 has a unique solution which satisfies the estimate,
 \begin{equation}\label{lambdak}
 \vert \Lambda^k u(\s,x,\xi,h) \vert \leq C_k \s  \sum_{j = 0}^k \Vert \Lambda^j F  \Vert_{L^\infty (\mathcal{O})}, 
  \end{equation}
  for all $(\s,x,\xi,h) \in \mathcal{O}$, where $\Lambda = h^\delta \partial_x$.
\end{coro}
\begin{proof}
Since by Proposition \ref{eikS_0} we have $\partial^2_x \psi \in S^0_\delta (\mathcal{O})$ one can write,
$$L = \partial_\s + a'(\xi)\partial_x + h^\mez d_3\partial_x \psi   + h^\mez d_2.$$
   where $d_3,\partial_x d_3, d_2$ belong to  $S^0_\delta (\mathcal{O})$. Setting 
   \begin{align*}
   U &= u(\s,x+\s a'(\xi),\xi,h), \quad c_1 = d_3(\s,x+\s a'(\xi),\xi,h) , \\
   c_2 &=d_2(\s,x+\s a'(\xi),\xi,h), 
   \end{align*}
   we see that $c_1,\partial_x c_1, c_2$ belong to $ S^0_\delta (\mathcal{O})$ and $U$ is a solution of the equation
   \begin{equation}
   L_1 U := (\partial_\s + h^\mez c_1\partial_x \psi   + h^\mez c_2)U = F_1.
   \end{equation}
   We shall prove by induction on $k$ that $U$ satisfies the estimate \eqref{lambdak}. The case $k = 0$ follows immediately from Lemma \ref{lb}. Assume now that \eqref{lambdak} is true up to the order $k -1, k \geq 1$. 
By the Leibniz formula we have
\begin{align*}
 L\Lambda^k U +k h^\mez (\partial_x c_1) \Lambda^k  U
 &=- h^\mez \sum_{i=0}^{(k-2)^+}\begin{pmatrix} k \\ i \end{pmatrix}(\Lambda^{k-i}c_1) \Lambda^i \partial_x U \\
 &-  h^\mez \sum_{i=0}^{k-1}\begin{pmatrix} k \\ i \end{pmatrix}(\Lambda^{k-i}c_2) \Lambda^i  U
 +  \Lambda^k F_1 := G_k.
\end{align*}
 The sum in the first line can be written,
 $ - h^\mez \sum_{i=1}^{k-1}\begin{pmatrix} k \\ i-1 \end{pmatrix}(\Lambda^{k-i} \partial_x c_1) \Lambda^i U. $
According to our assumptions on $c_1,c_2$,we can apply the Lemma \ref{lb} to the operator $ L + k h^\mez (\partial_x c_1)$.We obtain, using the induction and the fact that  $h^\mez \s \leq \tau_0$,
$$ \vert \Lambda^k U(\s,x,\xi,h) \vert \leq \s \Vert \Lambda^k F \Vert_{L^\infty(\mathcal{O})}  + C \s \sum_{j = 0}^{k -1} \Vert \Lambda^j F \Vert_{L^\infty(\mathcal{O})}, $$
which completes the induction.
 \end{proof}
\begin{prop}\label{psixi}
 Let $\Lambda = h^\delta \partial_x$. The solution of $\eqref{eikonal}$ given by Proposition $\ref{eiko}$ satisfies the estimates,
\begin{equation}\label{psixi1}
\begin{aligned}
&\vert \Lambda^k \partial_\xi \psi (\s,x,\xi,h)\vert + \vert \Lambda^k \partial_{x} \partial_\xi \psi (\s,x,\xi,h)\vert \leq  C_k \s ,\\
& \vert \Lambda^k\partial^{2}_\xi \psi (\s,x,\xi,h) \vert \leq  C_k \tau_0 h^{-\mez}   \s,
 \end{aligned}
 \end{equation}
for all $(\s,x,\xi,h) \in  \mathcal{O}$, where  $C$ depends 
only on $ \Vert (\eta, \psi) \Vert_{L^\infty (I, H^{s+ \mez}(\xR) \times H^{s}(\xR))}$.
\end{prop}
\begin{proof}
Differentiating \eqref{eikonal} with respect to $\xi$ we find that $U= \partial_\xi \psi$ 
satisfies the equation
\begin{equation}\label{U}
\partial_\s U + (a'(\xi + h^\mez \partial_x \psi) + h^\mez W_h^\delta) \partial_x U 
= - (\partial_x \psi) \int_0^1 a''(\xi + h^\mez \lambda \partial_x \psi)dÊ\lambda  - W_h^\delta. 
\end{equation}
 Then the estimate of the first term in the first line of 
\eqref{psixi1} follows from  Corollary \ref{LU=F}.  To estimate the second term we differentiate with 
respect to $\xi$ the equation \eqref{psi1}. We find that the fuction 
$U_1= \partial_\xi \psi_1 =  \partial_x \partial_\xi \psi$ satisfies the equation  
\begin{equation*}
\partial_\s U_1 + (a'(\xi + h^\mez \psi_1) + h^\mez W_h^\delta) \partial_x U_1 
+ h^\mez a''(\xi + h^\mez  \psi_1) \partial_x \psi_1 U_1 = -\partial_x W_h^\delta. 
 \end{equation*}
 The second  estimate in the first line of \eqref{psixi1} follows from  Corollary \ref{LU=F}.  Finally to estimate 
 $U_2= \partial_\xi^2 \psi$ we differentiate \eqref{U} 
 with respect to $\xi$ and we find that $U_2$ satisfies the equation
\begin{equation*}
\partial_\s U_2 + (a'(\xi + h^\mez \partial_x \psi) + h^\mez W_h^\delta) \partial_x U_2 = F
\end{equation*}
where 
\begin{multline*}
F = -h^\mez (\partial_x \partial_\xi \psi)^2 a''(\xi + h^\mez \partial_x \psi) 
+ (\partial_x \partial_\xi \psi)a''(\xi + h^\mez \partial_x \psi) \\
+ \frac{a''(\xi + h^\mez \partial_x \psi) - a''(\xi)}{h^\mez}.
\end{multline*}
So using the estimate on ${\partial_x \psi}$ and $\partial_ x \partial_\xi \psi$ 
obtained before, we deduce from Corollary \ref{LU=F} the last estimate of \eqref{psixi1}.
\end{proof}

\subsubsection{The symbol equations}
According to the formulas \eqref{C6}--\eqref{Anouveau}, 
since the phase $\psi$ now satisfies the eikonal equation \eqref{eikonal}, 
we are lead to solve the following transport equation
\begin{equation}\label{c0}
\begin{aligned}
\partial_\s b + a'(\xi+h^\mez \partial_x\psi)\partial_x b +  h^\mez W_h^\delta \partial_x b 
&-h^\mez (\partial_x^2\psi) a''(\xi+h^\mez \partial_x\psi) b \\
&+\frac{1}{2} h^\mez(\partial_x W^\delta_h)b
=-\frac{i}{h}\sum_{k=2}^N A_k,
\end{aligned}
\end{equation}
with
\begin{equation}\label{initiale}
b\arrowvert_{\s=0}=\chi_1(\xi),
\end{equation}
where $\chi_1 \in C_0^\infty(\xR)$ is equal to one on the support of the function $\chi$ given in  Theorem  \ref{DISP2}.
Let 
$$
\mu_0=\mez-2\delta,
$$
where we recall that $\delta<1/4$.

Let us set 
\begin{equation}\label{c1}
\Lambda =h^\delta \partial_x.
\end{equation}
Then according to \eqref{Anouveau} and the Leibniz formula one can write
\begin{equation}\label{c2}
\frac{1}{h}A_k =h^{k(1-\delta)-1}\sum_{k_1=0}^k c_{k,k_1} \Ly^{k-k_1}
\left[ (\partial_\xi^k a)(\rho((x,y))\right]\big\arrowvert_{y=x} \Lambda^{k_1}b,
\end{equation}
where $c_{k,k_1}\in \xC$. 

We shall take $b$ of the form
\begin{equation}\label{c3}
b=\chi_1 (\xi)e^{\widetilde{\theta}},\quad \widetilde{\theta}
=\sum_{j=0}^M {\widetilde{\theta}_j},\quad {\widetilde{\theta}}_j =h^{j\mu_0}\theta_j.
\end{equation}
Then $\Lambda^{k_1}b$ is a finite linear combination of terms of the form
$$
\left( \Lambda^{\alpha_0}e^{\widetilde{\theta}_0} \right)\cdots \left( \Lambda^{\alpha_M}e^{\widetilde{\theta}_M} \right),
\quad \alpha_0+\cdots +\alpha_M=k_1.
$$
Let 
$$
\omega=\left\{ \alpha\in \xN^{M+1} \,:\, \la\alpha\ra=k_1\right\},
$$
and, for $0\le p\le M$,
$$
\omega_p=\left\{ \alpha\in \omega,\quad
\alpha=(\alpha_0,\cdots,\alpha_p,0,\cdots,0) \text{ with }\alpha_p\neq 0\right\}.
$$
Then $\omega$ is the disjoint union of the $\omega_p$. 
It follows that 
\begin{equation}\label{c4}
\Lambda^{k_1} b =\sum_{p=0}^{M}
\sum_{\alpha\in\omega_p} d_{p,\alpha}\left( \Lambda^{\alpha_0}e^{\widetilde{\theta}_0} \right)\cdots \left( \Lambda^{\alpha_p}e^{\widetilde{\theta}_p} \right)
\exp\left(\sum_{j=p+1}^M {\widetilde{\theta}_j}\right),\quad d_{\alpha,p}\in\xC.
\end{equation}
Now by the Faa-di Bruno formula we have for $0\le \ell \le M$,
\begin{equation}\label{c5}
\Lambda^{\alpha_\ell} e^{\widetilde{\theta}_{\ell}} =e^{\widetilde{\theta}_{\ell}}  \sum_{s=1}^{\alpha_\ell}E_{s\ell}
\end{equation}
where $E_{s,\ell}$ is a finite linear combintation of terms of the form
$$
\prod_{i=1}^{s}\left(\Lambda^{p_i} \widetilde{\theta}_\ell\right)^{q_i} \quad\text{where }
1\le \sum_{i=1}^{s}q_i\le \alpha_\ell,\quad
\sum_{i=1}^{s}p_i q_i=\alpha_\ell.
$$
Since $\sum_{i=1}^{s}q_i\ge 1$ and $p(p+1)/2\ge p$, it follows from \eqref{c4} and \eqref{c3} that
\begin{equation}\label{c6}
\Lambda^{k_1} b = e^{\widetilde{\theta}} \sum_{p=0}^M h^{p\mu_0}\sum_{\la\beta\ra\le k_1}
G_{p\beta}\left( h,\Lambda^{\beta_0}\theta_0,\ldots,\Lambda^{\beta_p}\theta_p\right),
\end{equation}
where $G_{p,\beta}(h,\zeta_0,\ldots,\zeta_p)$ are bounded in $h$ and polynomial in $\zeta$. 
Coming back to 
\eqref{c2} we remark first that since $k\ge 2$ and $\mu_0=\mez- 2\delta>0$ we have
\begin{equation}\label{c7}
k(1-\delta)-1\ge 2(1-\delta)-1=\mez+\mu_0.
\end{equation}
Let us note that this is the only point where we use the fact that $\delta < \frac{1}{4}.$

On the other hand we have,
\begin{equation}\label{c8}
\Ly^{k-k_1}\left[ (\partial_\xi^k a)(\rho((x,y))\right]\big\arrowvert_{y=x} \in S^0_\delta.
\end{equation}
It follows then from \eqref{c2}, \eqref{c6}, \eqref{c7}, \eqref{c8} that for $k\ge 2$
\begin{equation}\label{c9}
\frac{1}{h}A_k =h^\mez \sum_{p=0}^M\sum_{k_1=0}^k \sum_{\la\beta\ra\le k_1}
h^{(p+1)\mu_0} f_{k,k_1} H_{p,\beta}\left( h,\Lambda^{\beta_0}\theta_0,\ldots,\Lambda^{\beta_p}\theta_p\right)
 e^{\widetilde{\theta}},
\end{equation}
where $f_{k,k_1}\in S^{0}_\delta$, 
$H_{p,\beta}(h,\zeta_0,\ldots,\zeta_p)$ are bounded in $h$ and polynomial in $\zeta$.\\
Let us denote by $L$ the linear operator appearing in  \eqref{c0},
\begin{equation}\label{c}
L=\partial_\s  + a'(\xi+h^\mez \partial_x\psi)\partial_x +  h^\mez W_h^\delta \partial_x .
\end{equation}
Since $b=e^{\widetilde{\theta}}$ with 
$\widetilde{\theta}=\sum_{p=0}^M h^{p\mu_0} \theta_p$ we have $Lb= e^{\widetilde{\theta}}  L\widetilde{\theta} $. 
It follows from 
\eqref{c9} that the transport equation \eqref{c0} can be written, modulo a remainder, 
\begin{align*}
L\theta_0&-h^\mez \big \{ (\partial_x^2\psi) -\frac{1}{2}(\partial_x W^\delta_h) \big \} \\
&+\sum_{p=0}^{M-1}h^{(p+1)\mu_0} \left(L \theta_{p+1}-h^{\mez} G_{p}(\theta_0,\ldots,\theta_p)\right)
=0,
\end{align*}
where $G_j(\theta_0,\ldots,\theta_p)$ are polynomials in $\Lambda^\beta \theta_\ell$ for $\la\beta\ra\le N$. 
Therefore we shall take for $\theta_p, 0 \leq p \leq M-1,$ the solutions of the problems 
\begin{equation}\label{c111}
\left\{
\begin{aligned}
&L\theta_0 = h^\mez \big \{(\partial_x^2\psi) a''(\xi+h^\mez \partial_x\psi) -\frac{1}{2} (\partial_x W^\delta_h)\big  \},\quad \theta_0\arrowvert_{\s=0}=0,\\  
&L\theta_{p+1}=h^\mez G_p(\theta_0,\ldots,\theta_p), \quad \theta_{p+1}\arrowvert_{\s=0}=0\quad 
(0\le p\le M-1).
\end{aligned}
\right.
\end{equation}
 We have the following result.
\begin{prop}\label{b}
Let $\Lambda = h^\mez \partial_x$. Then  \eqref{c111} has a unique solution $(\theta_1,..., \theta_{M})$ such that for  $0 \leq p \leq M$ and all integers $k\in \xN$,
\begin{equation}\label{estb}
 \begin{aligned}
&\vert \Lambda^k \theta_p (\s,x,\xi,h) \vert \leq C_k, \quad \vert \Lambda^k \partial_\xi \theta_p (\s,x,\xi,h) \vert \leq C_k h^{ - \delta} \s,\\
&\vert \Lambda^k \partial_\xi ^2 \theta_p (\s,x,\xi,h) \vert \leq C_k h^{ -\mez-2\delta} \s.
\end{aligned}
\end{equation}
\end{prop}
\begin{proof}
We proceed by induction on $p$. If $p= 0$, the estimate of the first term in the first line of \eqref{estb} follows immediately from Proposition \ref{eikS_0} and Corollary \ref{LU=F}. Now $ \partial_\xi \theta_0$ is solution of the  equation
\begin{equation}\label{TETA}
\begin{aligned}
 L\partial_\xi \theta_0 &= h^\mez \big \{ (\partial_\xi \partial_x^2 \psi)a''(\xi+h^\mez \partial_x\psi) + (\partial_x^2 \psi)(1+ h^\mez \partial_x \partial_\xi \psi)a'''(\xi+h^\mez \partial_x\psi)\big \}\\
 &- (1+h^\mez \partial_\xi \partial_x\psi)a''(\xi+h^\mez \partial_x\psi)\partial_x \theta_0 := F_0.
 \end{aligned}
 \end{equation}
It follows from \eqref{psixi1} and the first estimate that $ \Vert \Lambda^j F_0 \Vert_{L^\infty (\mathcal{O})} \leq C_jh^{-\delta}$. Using Corollary \ref {LU=F} we obtain the estimate of the second term in the first line of \eqref{estb}. To estimate $\partial_\xi ^2 \theta_0$ we differentiate one more time the equation \eqref{TETA} and we find using the same arguments that $L\partial_\xi ^2 \theta_0 = F_1$ where $F_1$ satisfies, $ \Vert \Lambda^j F_1 \Vert_{L^\infty (\mathcal{O})} \leq C_jh^{-\mez-2\delta}$. The estimate of the term in the second line of \eqref{estb} follows the from Corollary \ref{LU=F}. Assuming that \eqref{estb} is true up to the order $p$ the estimate $\theta_{p+1}$  follows from the second equation in \eqref{c111} and the induction. 
\end{proof}
 It follows then from \eqref{r=bis},\eqref{eikonal}, \eqref{c0} and \eqref{c111} that 
\begin{equation} \label{rfinal}
r = \sum_{j=1}^5 r_j,
\end{equation}
where $r_1, r_2,r_3,r_4$ are defined in \eqref{C9},\eqref{C10},\eqref{Anouveau},\eqref{r4bis} and
$$r_5= h^\mez h^{(M+1)\mu_0}G_M(\theta_0,.., \theta_M)\chi_1(\xi)e^{\widetilde{\theta}} \zeta.$$
\end{proof}
\subsubsection{End of the proof of Proposition \ref{eikampl2}}
Since we have
$$\widetilde{b} = \chi_1(\xi)c(\s,x,\xi,h)(\zeta(x-z-\s a'(\xi))$$
with $c = e^{\widetilde{\theta}}= e^{\sum_{p=0}^M h^{p\mu_0} \theta_p} \in S^0_\delta(\Omega)$ the same arguments as those used in section 4.1.1 give the proof of \eqref{estReste2}.
\subsection{The stationary phase lemma}

In the sequel we will use the following elementary version of the stationary phase inequality where 
we allow complex valued phase functions.
\begin{lemm}\label{phasestatio}
For any real numbers $\alpha,\beta,h,\rho$ such that
$$
\alpha<\beta, \quad 0<h\leq 1, \quad \rho>0
$$
and for any functions $\phi\in C^2([\alpha,\beta],  \xC)$, 
$p\in C^1([\alpha,\beta], \xC)$ such that 
$$
\forall \xi \in [\alpha, \beta],\quad \la\IM \phi(\xi)\ra \leq h,\quad \la\IM \phi''(\xi) \ra\leq \rho,
\quad  \frac{\rho}{2} \leq \RE\left(\phi''(\xi)\right) \leq \rho, 
$$ 
we have 
$$ \left| \int_\alpha^\beta e^{\frac{i}{h}\phi(\xi)} p(\xi) \,d\xi \right| \leq 
\left( 8 \|p\|_{L^\infty}+ 2\int_\alpha^\beta \la p' (\xi)\ra \, d\xi\right)\Bigl(\frac h {\rho}\Bigr)^{\frac{1}{2}}.
$$
\end{lemm}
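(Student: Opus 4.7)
The plan is to carry out the standard one-dimensional stationary phase argument, adapted to accommodate a small complex perturbation of the phase. The three ingredients are: locate the unique zero $\xi_0$ of $\RE\phi'$ on the real line, treat a window of size $\sqrt{h/\rho}$ around $\xi_0$ by the trivial estimate, and integrate by parts once on its complement.

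First I would observe that $\RE\phi''(\xi)\geq\rho/2>0$ on $[\alpha,\beta]$ forces $\RE\phi'$ to be strictly increasing, so it has at most one zero $\xi_0\in\xR$; if $\RE\phi'$ does not vanish on $[\alpha,\beta]$, take $\xi_0=\alpha$ or $\xi_0=\beta$ accordingly. Since $\la\IM\phi\ra\leq h$, the exponential satisfies $\la e^{i\phi/h}\ra\leq e$. By Taylor's formula,
\[
\la\RE\phi'(\xi)\ra=\la\int_{\xi_0}^{\xi}\RE\phi''(s)\,ds\ra\geq\frac{\rho}{2}\la\xi-\xi_0\ra,
\]
hence $\la\phi'(\xi)\ra\geq\frac{\rho}{2}\la\xi-\xi_0\ra$ throughout $[\alpha,\beta]$.

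Next, set $\delta:=\sqrt{h/\rho}$ and $J:=[\alpha,\beta]\cap[\xi_0-\delta,\xi_0+\delta]$. The near contribution $\int_J e^{i\phi/h}p\,d\xi$ is bounded trivially by $2e\delta\|p\|_{L^\infty}=2e(h/\rho)^{1/2}\|p\|_{L^\infty}$. On the complement $[\alpha,\beta]\setminus J$, one has $\la\phi'\ra\geq\mez\sqrt{\rho h}$, so the identity
\[
e^{i\phi(\xi)/h}=\frac{h}{i\phi'(\xi)}\,\frac{d}{d\xi}\bigl(e^{i\phi(\xi)/h}\bigr)
\]
is usable for one integration by parts. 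This produces boundary terms bounded by $2e\|p\|_{L^\infty}(h/\rho)^{1/2}$ at each of (at most) four endpoints, an interior contribution $\int\frac{h\,p'}{\phi'}e^{i\phi/h}\,d\xi$ controlled by $2e(h/\rho)^{1/2}\int\la p'(\xi)\ra\,d\xi$, and a second interior term $\int\frac{h\,p\,\phi''}{(\phi')^2}e^{i\phi/h}\,d\xi$ for which $\la\phi''\ra\leq 2\rho$ (since $\la\RE\phi''\ra\leq\rho$ and $\la\IM\phi''\ra\leq\rho$); combined with $\int_{\la\xi-\xi_0\ra\geq\delta}d\xi/(\xi-\xi_0)^2\leq 2/\delta$, this yields a further bound of order $\|p\|_{L^\infty}(h/\rho)^{1/2}$.

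The only mildly subtle point is that $\phi$ is complex-valued: the hypotheses $\la\IM\phi\ra\leq h$ and $\la\IM\phi''\ra\leq\rho$ ensure respectively that $\la e^{i\phi/h}\ra$ stays bounded by a constant and that $\la\phi''\ra$ remains comparable to $\rho$, so the standard real-variable bookkeeping goes through unchanged. Summing the trivial bound on $J$, the boundary contributions, and the two interior integrals, and absorbing numerical factors, gives the stated inequality. I do not expect any serious obstacle: this is a routine robust form of the usual one-dimensional stationary phase / Van der Corput lemma.
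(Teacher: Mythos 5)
Your proposal is correct and follows essentially the same route as the paper: decompose into a window of width comparable to $\sqrt{h/\rho}$ around the (unique) critical point of $\RE\phi'$, use the trivial bound on the window, and integrate by parts once on the complement, using the monotonicity of $\RE\phi'$ (coming from $\RE\phi''\geq\rho/2$) to control $\int h\phi''/(\phi')^2$. The paper packages the decomposition slightly differently — three intervals labelled by the sign and size of $\RE\phi'(\xi)$ rather than by distance to $\xi_0$ — but since $\RE\phi'$ is strictly increasing these are the same sets up to the cutoff constant, and the subsequent estimates (boundary terms, the $p'$ term, the $\phi''/(\phi')^2$ term via the linear lower bound for $\RE\phi'$) match yours line for line.
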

\begin{proof}
Notice that we can assume that $\rho \geq h$ (otherwise, the conclusion is straightforward). 
Notice now that, using the monotonicity assumption of the real part of the phase $\phi$, 
we can decompose the interval $(\alpha, \beta)$ into the disjoint union of at most three intervals 
$$
(\alpha, \beta) = I_1 \cup I_2 \cup I_3,
$$
where $I_1$, $ I_2$ or $I_3$ are possibly empty and satisfy
\begin{alignat*}{2}
&\forall \xi\in I_1, \quad&&\RE (\phi'(\xi))\leq -(\rho h) ^{1/2}, \\
&\forall \xi\in I_2, && - (\rho h) ^{1/2} \leq \RE (\phi'(\xi))\leq (\rho h) ^{1/2},\\
&\forall \xi\in I_3, &&\RE (\phi'(\xi))\geq (\rho h) ^{1/2}.
\end{alignat*}
Let us first study the contribution of $I_3$. Either $I_3=\emptyset$ or $I_3$ is an 
interval contained in $[\delta, \beta]$ for some 
$\delta\in [\alpha,\beta[$. 
Then, using that
$$
\frac{h}{i \phi'(\xi)} \partial _\xi \bigl( e^{\frac{i}{h}\phi(\xi)}\bigr) = e^{\frac{i}{h}\phi(\xi)},
$$ and integrating by parts, we obtain
 \begin{multline}
  \int_\delta^\beta e^{\frac{i}{h} \phi(\xi)} p(\xi) d\xi=
  \Bigl[  \frac h {i \phi'(\xi)}e^{\frac i h \phi(\xi)} p(\xi)\Bigr]_\delta^\beta
 - \int_\delta^\beta e^{\frac{i \phi(\xi)} h} \partial_\xi \Bigl(  \frac h {i \phi'(\xi)} p(\xi)\Bigr) \, d\xi\\
 = \Bigl[  \frac h {i \phi'(\xi)}e^{\frac i h \phi(\xi)} 
p(\xi)\Bigr]_\delta^\beta- \int_\delta^\beta e^{\frac{i \phi(\xi)} h} \frac h {i \phi'(\xi)}
\partial_\xi p(\xi)d\xi + \int_\delta^\beta e^{\frac{i \phi(\xi)} h} \frac{ h \phi''(\xi) } {\phi'(\xi)^2} p(\xi) d\xi
 \end{multline}
Clearly, the contributions of the two first terms are easily handled by means of 
the lower bound on $\RE (\phi'(\xi))$ on $I_3$, and to conclude, 
it suffices to bound the last term. But according to the assumption on the phase, 
we now have
$$
\RE (\phi'(\delta))\geq (\rho h) ^{1/2} \Rightarrow
\RE (\phi'(\xi)) \geq (\rho h)^{1/2}+ \frac {\rho} 2  (\xi- \delta),
$$
and consequently, the last term is bounded by
$$
\lA p\rA_{L^\infty}\int_\delta ^\beta \frac {4h\rho} { (2(h\rho)^{1/2} + \rho(\xi- \delta))^2}\, d\xi \leq 
\frac  {2 h\|p\|_{L^\infty}} { (h\rho)^{1/2}},
$$
where the last inequality is obtained by a straightforward computation. 

Now, of course, the contribution of $I_1$ is dealt with similarly, 
and consequently we can  focus on the contribution of $I_2$. 
Now remark that according to the assumptions on $\phi''$, the length of $I_2$ 
is smaller that $4(h/\rho)^{1/2}$, which implies 
$$
\la \int_{I_2} e^{\frac{i}{h}\phi(\xi)} p(\xi) dt \ra \leq 4\lA p \rA_{L^\infty} 
\Bigl( \frac h \rho \Bigr)^{1/2}.
$$
This completes the proof.
\end{proof}
\subsection{End of the proof of Theorem \ref{DISP2}}
As in the preceding section we have
\begin{equation}\label {S(t)=2}
\left\{
\begin{aligned}
&S(t,0,h)u_{0,h} = D_1 + D_2 + D_3 \quad where \quad  D_1 = \widetilde{U}_h (t,x),\\
 & D_2 = -S(t,0,h)v_{0,h}(x), \, D_3= - \int_{0}^t S(t,s,h) [F_h(s,x)]ds.
\end{aligned}
\right.
\end{equation}
 The terms $D_2$ and $D_3$ are estimated exactly as in section 4.3 while $D_1$ will be estimated differently using Lemma \ref{phasestatio} instead of Van der Corput Lemma. Indeed recall that according to \eqref{amplitude2} and \eqref{c3} our amplitude  in the parametrix \eqref{parametrix2} is given by
 $$\widetilde{b}(\s,x,z,\xi,h) = \chi_1(\xi)e^{\widetilde{\theta}(\s,x,z,\xi,h)}\zeta(x-z-\s a'(\xi)). $$
   
 The new fact here is that we shall glue the term $e^{\widetilde{\theta}}$ with the phase and apply the  Lemma \ref{phasestatio} with the new phase $\varphi -z\xi + \frac{h}{i} \widetilde{\theta}$. Using \eqref{parametrix2} we can write in the variable $\s = th^{-\mez}$,
 \begin{equation*}
 \begin{aligned}
 D_1&= \int K(\s,x,z,\xi,h)u_{0h}(z)\,dz ,\\
K&= \frac{1}{2\pi h} \int e^{\frac{i}{h} (\varphi(\s,x,\xi,h)-z \xi + \frac{h}{i}  \widetilde{\theta}(\s,x,\xi,h))} \chi_1(\xi) \zeta(x-z- \s a'(\xi)) \,d\xi.
\end{aligned}
\end{equation*}
Therefore we shall apply the Lemma \ref{phasestatio} with,
\begin{equation*}
\left\{
\begin{aligned}
\phi&= \varphi(\s,x,\xi,h)-z \xi + \frac{h}{i} \widetilde{\theta}(\s,x,\xi,h),\quad \varphi = x \xi + \s a(\xi) + h^\mez \psi (\s,x,\xi,h),\\
p &=  \chi_1(\xi) \zeta(x-z- \s a'(\xi)) ,\\
\rho & = C\s, \quad C>0.
\end{aligned}
\right.
\end{equation*}
Let us show that all the hypotheses in this lemma are satisfied. For this we shall use \eqref{psixi1} and \eqref{estb}. 
First of all,  since $ \delta \in ]0, \frac{1}{4}[$ we have, 
$$\vert \IM \phi \vert = h \vert \IM \widetilde {\theta} \vert \leq Ch, \qquad\vert \partial_\xi^2 \IM \phi \vert \leq  h\vert \partial_\xi^2 \widetilde {\theta} \vert \leq Ch h^{-\mez - 2\delta} \s \leq \s.$$  
Moreover,
$$ \vert \partial_\xi^2 \RE \phi \vert \leq \vert a''(\xi)\vert \s  + h^\mez \vert \partial_\xi^2 \psi \vert + h \vert \partial_\xi^2 \widetilde {\theta} \vert \leq C \s.$$
 Finally,
 $$ \vert \RE \phi \vert \geq \vert a''(\xi)\vert \s - h^\mez \vert \partial_\xi^2 \psi \vert - h \vert \partial_\xi^2 \widetilde {\theta} \vert \geq  \vert a''(\xi)\vert \s - C_1 \tau_0 \s - C_2 h^{\mez - 2 \delta} \s \geq C_3 \s, $$ if $\tau_0$ and $h$  are small enough.

It follows the from Lemma \eqref{phasestatio} that,
$$ \vert K \vert \leq Ch^{-1} \Big ( \frac h \s \Big)^\mez \Big \{1+ \int  \s \vert a''(\xi) \vert \vert \zeta' (x - z -\s a'(\xi) \vert d\xi \Big \}.$$
Since the last integral is bounded by $C' \int \vert \zeta'(t) \vert dt$, we deduce that the term $D_1$ satisfies the estimate \eqref{eq.dispersion} which completes the proof of Theorem \ref{DISP2}.

\section{Back to estimates for $(\eta, \psi)$}\label{section.6}

Notice that up to now, we only proved estimates for the dyadically localized functions $ \Delta_j u$. In this section, we shall show how we can recover estimates for $(\eta, \psi)$, the solutions of the water-wave system~\eqref{system}.   Recall that the Besov space $B^r_{\infty,2}$ is defined by 
$$
u \in B^r_{\infty,2} (\xR)\Leftrightarrow \sum_{j\in\xN}2^{2jr}\lA \Delta_j u\rA_{L^\infty}^2 <+\infty.
$$
We will use the following elementary lemma
\begin{lemm}\label{lem.triv}
 If the symbol $a \in \Gamma^m_0$, then the operator $T_a$ is bounded from $B^s_{\infty, 2}(\xR)$ to $B^{s-m}_{\infty, 2} ( \xR)$.
\end{lemm}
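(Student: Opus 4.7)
The plan is to reduce the Besov boundedness to a single-block estimate of the form
\begin{equation*}
\lA \Delta_j T_a u \rA_{L^\infty(\xR)} \leq C\, 2^{jm}\, M_0^m(a) \sum_{|k-j|\leq N} \lA \Delta_k u \rA_{L^\infty(\xR)},
\end{equation*}
for some fixed integer $N$ independent of $j$; once this is proved, Cauchy--Schwarz applied to the $(2N+1)$-element inner sum combined with interchange of summation indices immediately yields $\lA T_a u\rA_{B^{s-m}_{\infty,2}}^2 \lesssim \lA u\rA_{B^s_{\infty,2}}^2$.

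Two ingredients enter the proof of the single-block estimate. The first is a spectral localization: the cutoff $\chi(\xi-\eta,\eta)$ appearing in the definition \eqref{eq.para} of $T_a$ forces $\la \xi - \eta \ra \leq \eps_2 \la \eta \ra$, so the output frequency $\xi$ is comparable to the input frequency $\eta$; together with $\psi(\eta) = 1$ for $\la\eta\ra \geq 2$, this implies that $\Delta_j T_a \Delta_k = 0$ unless $|j-k|\leq N$ for some integer $N$ depending only on $\eps_1, \eps_2$. Hence
$\Delta_j T_a u = \Delta_j T_a \big(\sum_{|k-j|\leq N}\Delta_k u\big).$

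The second is a Bernstein-type bound: if $v$ has spectrum in $\la \eta \ra \sim 2^k$, then $\lA T_a v\rA_{L^\infty(\xR)} \leq C\, 2^{km}\, M_0^m(a)\, \lA v\rA_{L^\infty(\xR)}$. For this, one writes $T_a v(x) = \int K(x, x-y)\, v(y)\, dy$, where
\begin{equation*}
K(x,z) = (2\pi)^{-1}\int e^{iz\eta} \sigma(x,\eta)\, d\eta,
\end{equation*}
and $\sigma(x,\eta)$ is a symbol of order $m$ with spectrum in $\la \eta \ra \sim 2^k$, obtained from $a$ by restricting the $\eta$-variable to an annulus at scale $2^k$. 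Repeated integrations by parts in $\eta$, using the estimates \eqref{para:10} for the symbol class $\Gamma^m_0$, give the uniform bound $\la K(x,z)\ra \leq C_N\, 2^{k(m+1)}\, M_0^m(a) \la 2^k z\ra^{-N}$ for any $N\in \xN$; integrating in $z$ yields $\lA K(x,\cdot)\rA_{L^1_z(\xR)} \lesssim 2^{km}\, M_0^m(a)$, whence the claim by Young's inequality.

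Combined with the uniform $L^\infty$-boundedness of the Littlewood--Paley blocks $\Delta_j$ (whose convolution kernels are uniformly bounded in $L^1$), these two ingredients yield the single-block estimate and therefore the lemma. The only nontrivial step is the kernel bound obtained by integration by parts; everything else is a routine manipulation of dyadic sums.
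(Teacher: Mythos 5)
The paper does not give a proof of this lemma — it simply states it as ``elementary'' before using it in Section~\ref{section.6} — so there is no argument of the paper's to compare against; this review therefore just assesses your proof on its own terms.

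Your proof is correct and is the natural argument. Both ingredients check out. For the spectral localization, the support conditions on $\chi$ (namely $\chi(\theta,\eta)=0$ for $\la\theta\ra\ge\eps_2\la\eta\ra$ with $\eps_2$ small) and on $\psi$ do force the output frequency to lie in an annulus $\la\xi\ra\sim\la\eta\ra$, so $\Delta_j T_a\Delta_k = 0$ for $\la j-k\ra> N$ with $N$ depending only on $\eps_1,\eps_2$ and on the Littlewood--Paley cutoffs; this gives the reduction to a finite sum over $k$ near $j$. For the kernel estimate, one small point worth making explicit: the symbol of $T_a$ is the $x$-regularized $\sigma_a(x,\eta)=\chi(D_x,\eta)\big[a(\cdot,\eta)\psi(\eta)\big](x)$, not $a$ itself, but since $\chi(\cdot,\eta)$ is homogeneous of degree $0$ its $\eta$-derivatives produce factors $\la\eta\ra^{-1}$, so $\sigma_a$ inherits from $a\in\Gamma^m_0$ the bounds $\la\partial_\eta^\alpha\sigma_a(x,\eta)\ra\le C_\alpha M_0^m(a)(1+\la\eta\ra)^{m-\la\alpha\ra}$ uniformly in $x$. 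After inserting a cutoff $\tilde\chi(2^{-k}\eta)$ supported where $\la\eta\ra\sim 2^k$ (legitimate since the input $v$ is so localized), two integrations by parts in $\eta$ already give $\la K(x,z)\ra\le C\,2^{k(m+1)}\la 2^kz\ra^{-2}$, hence $\sup_x\lA K(x,\cdot)\rA_{L^1}\lesssim 2^{km}M_0^m(a)$, and this amount of $\xi$-regularity is exactly what the semi-norm $M_0^m$ controls in dimension $d=1$. Schur's test (not quite Young, since the kernel is not convolutional, but the $L^1$ bound is uniform in $x$) then gives the single-block estimate, and the $\ell^2$ summation with the finite overlap $\la j-k\ra\le N$ is routine. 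The argument is complete and fills the gap the paper leaves implicit.
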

We have the slightly stronger result (compared to Theorems~\ref{theo:main},~\ref{theo:classic})
 \begin{prop}\label{pstrong}
Let $I = [0,T].$ Under the assumptions of Theorem~\ref{theo:main}, there exists $\eps>0$ such that
\begin{equation}\label{strong1}
(\eta,\psi) \in L^4(I,B^{s-\frac{1}{4}+\mez +\eps}_{\infty,2}(\xR)\times 
B^{s-\frac{1}{4}+\eps}_{\infty,2}(\xR)).
\end{equation}

Under the assumptions of Theorem~\ref{theo:classic}, we have
\begin{equation}\label{strong2}
(\eta,\psi) \in L^4(I,B^{s-\frac{1}{8}+\mez}_{\infty,2}(\xR)\times 
B^{s-\frac{1}{8}}_{\infty,2}(\xR)).
\end{equation}
\end{prop}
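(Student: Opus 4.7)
The plan is to invert, in reverse order, the chain of reductions that led from $(\eta,\psi)$ to the dyadic pieces $\Delta_j u$ for which Strichartz-type bounds have already been established:
$$
(\eta,\psi)\ \longrightarrow\ U = \psi - T_{\mathfrak B}\eta\ \longrightarrow\ \Phi = T_p\eta + iT_{c_1}U\ \longrightarrow\ \Phi^* = \kappa^*\Phi\ \longrightarrow\ u = T_{e^{ig}}\Phi^*.
$$
The first step is to sum the dyadic estimates. Under the hypotheses of Theorem~\ref{theo:main}, the gluing argument at the end of Section~\ref{sec.semiclass} gives $\|\Delta_j u\|_{L^4(I, L^\infty)} \leq C\, 2^{-j(s - 1/4 + \eps/4)}$, while under those of Theorem~\ref{theo:classic}, Corollary~\ref{cor.5.2} gives $\|\Delta_j u\|_{L^4(I, L^\infty)} \leq C\, c_j\, 2^{-j(s - 1/8)}$ with $(c_j) \in \ell^2$. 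The Minkowski inequality
$$
\|f\|_{L^4(I, B^r_{\infty,2})}^2 \leq \sum_j 2^{2jr}\|\Delta_j f\|_{L^4(I, L^\infty)}^2
$$
then yields $u \in L^4(I, B^{s - 1/4 + \eps'}_{\infty,2})$ for some $\eps' > 0$ and $u \in L^4(I, B^{s - 1/8}_{\infty,2})$ in the two settings respectively.

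Next, I would undo the three paradifferential/paracomposition changes of unknowns. Since $e^{\pm ig} \in \Gamma^0_{s - 3/2}$ (because $\partial_x g \in \Sigma^0_{s-3/2}$) and $|e^{ig}| = 1$, the composition Theorem~\ref{theo:sc} produces a parametrix $T_{e^{-ig}}$ for $T_{e^{ig}}$ modulo an operator of order $-(s - 3/2) < 0$; combined with Lemma~\ref{lem.triv}, this transfers the Besov regularity from $u$ to $\Phi^*$ without loss. To undo the paracomposition, I would rewrite
$$
\Phi\circ\kappa = \Phi^* + T_{(\partial_x \Phi)\circ\kappa}\,\kappa,
$$
and compose with the inverse diffeomorphism $\chi = \kappa^{-1}$. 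The Sobolev bounds on $\kappa$ and $\chi$ obtained in Section 3 ($\partial_x \kappa - 1,\ \partial_x\chi - 1 \in L^\infty(I, H^{s - 1/2})$) imply that composition with $\kappa$ or $\chi$ preserves $B^r_{\infty,2}$ in the relevant regularity range, and that the paraproduct correction is smoother than $\Phi^*$ thanks to the dyadic decay $\|\Delta_j\kappa\|_{L^\infty} = O(2^{-js})$. Finally, since $p \in \Sigma^{1/2}_{s-1}$ is elliptic and $c_1 = (1 + (\partial_x\eta)^2)^{-1/2}$ is bounded below, Theorem~\ref{theo:sc} produces paradifferential parametrices $T_q$ with $q \in \Sigma^{-1/2}_{s-1}$ and $T_{1/c_1}$. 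Splitting $\Phi$ into its real and imaginary parts and applying these parametrices to $\RE\,\Phi$ and $\IM\,\Phi$ yields $\eta$ with a gain of $\mez$ derivative (consistent with the $H^{s+\mez}$ versus $H^s$ scaling) and $U$ without loss. The identity $\psi = U + T_{\mathfrak B}\eta$ combined once more with Lemma~\ref{lem.triv} applied to $T_{\mathfrak B}$ (of order $0$ since $\mathfrak B \in L^\infty(I, H^{s-1/2})$) then delivers the claimed regularity on $\psi$.

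The main obstacle is the paracomposition step. Alinhac's symbolic calculus for $\kappa^*$, in the form of Theorem~\ref{theo:paracomp2}, is stated in $H^s$-scales; here it must be transported to the Besov spaces $B^r_{\infty,2}$. The key ingredients are the action of composition with $\kappa$ and $\chi$ on these spaces, and the smoothness of the correction $T_{(\partial_x\Phi)\circ\kappa}\,\kappa$. Both reduce to the Sobolev estimates on $\kappa$ and $\chi$ proved in Section 3, and in particular to the bound $\partial_x\kappa - 1 \in L^\infty(I, H^{s - 1/2})$ with $s > 5/2$, which is what guarantees that no regularity below the targeted threshold is lost.
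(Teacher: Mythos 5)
Your proposal matches the paper's own proof essentially step by step: sum the dyadic estimates into a Besov norm, then invert the chain of reductions in reverse order ($T_{e^{ig}}$, $\kappa^*$, $T_p$ and $T_{c_1}$, and finally $\psi = U + T_{\mathfrak B}\eta$), using ellipticity and paradifferential symbolic calculus for the first and third inversions, and the identity $\Phi\circ\kappa = \Phi^* + T_{(\partial_x\Phi)\circ\kappa}\kappa$ together with a composition lemma on $B^r_{\infty,2}$ for the second. The only cosmetic difference is that the paper handles the paraproduct correction $T_{(\partial_x\Phi)\circ\kappa}\kappa$ by observing directly that it lies in $L^\infty(I,H^{s+\mez})\subset L^\infty(I,B^s_{\infty,2})$, and proves the boundedness of $u\mapsto u\circ\chi$ on $B^r_{\infty,2}$ by interpolating between H\"older spaces rather than by the dyadic-decay remark you invoke, but both yield the same conclusion.
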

Notice that Theorem~\ref{theo:main}  follows from the first part in Proposition~\ref{pstrong}, using that, 
$$B^{\sigma+\eps}_{\infty,2}(\xR)\subset W^ {\sigma ,\infty}(\xR).
$$ On the other hand, using complex interpolation theory (see~\cite[Theorem 6.4.5 (6)]{BeLo}), we have,  with $q \geq 2$, (since $H^{\s_2} = B^{\s_2}_{2,2}$),  
$$[B^{\s_1}_{\infty,2}, H^{\s_2}]_{\frac{2}{q}} = B^{\s}_{q,2} \subset W^{\s,q},\quad \s= (1-  \frac{2}{q}) \s_1+    \frac{2}{q} \s_2.
$$ 
Taking $\s_1 = s - \frac{1}{ 8}$  and $\s_2= s$ we obtain  $\s = s - \frac {1}{8} + \frac {1}{4q}$. It follows that 
 
$$ \|\psi (t,.)\|_{W^{s - \frac{1}{8} +  \frac {1}{4q} ,q}(\xR)} \leq C \|\psi (t,.)\|_{B^{s- \frac 1 8}_{\infty, 2}(\xR)} ^{1-  \frac{2}{q}} \| \psi (t,.)\|_{H^s (\xR)}^{\frac{2}{q}} .
$$ 

 It follows that, with $(p,q)$ satisfying  $ \frac 2 p + \frac 1 q = \frac 1 2 , \quad 2 \leq q<+\infty,$ we have 
 
$$ \|\psi\|^p _{L^p(I, W^{s  - \frac{1}{8} +  \frac {1}{4q},q} (\xR))}\leq C   \|\psi\|^4_{L^4(I, B^{s- \frac 1 8}_{\infty, 2}(\xR))} \| \psi\|_{L^\infty(I, H^s(\xR))}^{p - 4}.
$$ 
which  implies Theorem~\ref{theo:classic} (the estimate for $\eta$ being similar).

Let us now turn to the proof of Proposition~\ref{pstrong}. For conciseness, we will only prove~\eqref{strong2}, the proof of~\eqref{strong1} being similar (easier). Recall that
 the function $u$ is obtained from $(\eta, \psi)$ through the following steps:
 \begin{enumerate}
 \item $u = T_{e^{ig}} \Phi^*$, where the function $g$ is real and satisfies $\partial_x g \in \Gamma^ 0_{s- \tdm}$ (which implies $e^{ig} \in \Gamma^ 0_{s- \mez}$) .
 \item $\Phi^*= \kappa^* \Phi$ where (see~\eqref{kappa_x}) $\kappa \in L^\infty(I, W^{2,\infty}(\xR)).$
 \item $\Phi = T_p \eta + i T_{c_1} U $,  with $p \in \Sigma^{1/2}_{s-1}$ is an elliptic symbol and $c_1= (1+ (\partial_x \eta)^2)^{-\frac 1 2}.$
\item $U= \psi - T_{\mathfrak{B} }\eta$, where $\mathfrak{B} \in L^\infty(I, H^{s-1}( \xR))$ is defined in~\eqref{BV}. 
\end{enumerate}

{\em Step 1:} Starting from Corollary~\ref{cor.5.2}, we have 
$$ \|u \|_{L^4(I, B^{s- \frac 1 8}_{\infty,2} )} < + \infty$$
According to the symbolic calculus, since $ e^{ig} \in \Gamma_0^0$,  we have
$$u = T_{e^{ig}} \Phi^* \Rightarrow \Phi^* = T_{e^{-ig}}u  + R_{-1} ( \Phi^*)
$$ where $R_{-1} $ is of order $-1$ (i.e. bounded from $H^s( \xR)$ to $H^{s+1} ( \xR)$) . Since $H^{s + \mez}(\xR) \subset B^s_{\infty,2}(\xR)$ it follows from Lemma~\ref{lem.triv}, the boundedness of $\Phi^*$ in $L^\infty(I, H^s( \xR))$ that 
$$ \|\Phi^* \|_{L^4(I, B^{s- \frac 1 8}_{\infty,2}(\xR) )} < + \infty.$$

{\em Step 2:} 
We have
$$ \Phi^* = \kappa^* \Phi \Leftrightarrow \Phi \circ \kappa = \Phi^* + T_{\partial_x \Phi\circ \kappa} \kappa
$$ Notice that $\partial_x \Phi\circ \kappa \in L^\infty(I \times \xR)$ and $ \partial_x \kappa\in L^\infty(I,H^{s- \mez})$. As a consequence, $ T_{\partial_x \Phi\circ \kappa} \kappa  \in L^\infty(I, H^{s+ \mez}) \subset  L^\infty(I, B^{s}_{\infty,2})$. We deduce from Step 1  
$$\|\Phi\circ \kappa \|_{L^4(I, B^{s- \frac 1 8}_{\infty,2}(\xR) )} < + \infty.$$
We conclude  that
$$\|\Phi \|_{L^4(I, B^{s- \frac 1 8}_{\infty,2} (\xR))} < + \infty,$$
by using the following lemma (with $\chi= \kappa ^{-1}$, $r= s- \frac 1 8$ and $r< \s<s$).
\begin{lemm}
Let $\s>1$. Consider  $\chi$ such that $\partial_x \chi \in W^{\s-1, \infty}(\xR)$. Then, for any $0<r< \s$,  the map $ u  \mapsto u\circ \chi $ is continuous on $B^{r}_{\infty,2}$.
\end{lemm}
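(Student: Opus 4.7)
The plan is to decompose $u$ via Littlewood--Paley and establish a frequency-localization estimate for the composition $u \circ \chi$. Write $u = S_0 u + \sum_{k \geq 1} u_k$ with $u_k := \Delta_k u$, so that by the Besov membership of $u$, the sequence $\big(2^{kr}\|u_k\|_{L^\infty}\big)_{k\geq 0}$ lies in $\ell^2$. The low-frequency contribution $S_0 u \circ \chi$ is harmlessly bounded in $L^\infty$ (hence in $B^r_{\infty,2}$) since $\chi$ is a diffeomorphism. The heart of the argument is to prove, for any fixed $r' \in (r,\s)$, the almost-orthogonality bound
\begin{equation*}
\|\Delta_j(u_k \circ \chi)\|_{L^\infty(\xR)} \leq C\, 2^{-r'|j-k|}\, \|u_k\|_{L^\infty(\xR)}, \qquad j,k \geq 1,
\end{equation*}
with $C$ depending only on $\chi$ and $r'$. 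Granting this, for each $j$ one writes $2^{jr}\|\Delta_j(u\circ\chi)\|_{L^\infty} \leq C\sum_k 2^{r(j-k) - r'|j-k|} \cdot 2^{kr}\|u_k\|_{L^\infty}$, and the kernel $\big(2^{rm-r'|m|}\big)_{m\in\xZ}$ lies in $\ell^1$ because $0 < r < r'$, so the discrete Young inequality $\ell^1 * \ell^2 \hookrightarrow \ell^2$ yields $\|u\circ\chi\|_{B^r_{\infty,2}} \leq C\|u\|_{B^r_{\infty,2}}$.

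The almost-orthogonality estimate is proved in two regimes. For $j \geq k$, one uses the smoothness of $u_k \circ \chi$: by Bernstein, $\|\partial^m u_k\|_{L^\infty} \leq C_m 2^{mk}\|u_k\|_{L^\infty}$ for every $m \in \xN$, and, together with $\chi \in W^{\s,\infty}$, the Fa\`a di Bruno formula yields $\|u_k \circ \chi\|_{C^{r'}} \leq C\, 2^{r'k}\|u_k\|_{L^\infty}$ for any $r' \leq \s$. The standard estimate $\|\Delta_j v\|_{L^\infty} \leq C\, 2^{-jr'}\|v\|_{C^{r'}}$ then closes this case. For $j < k$, one exploits oscillation: expanding both $\Delta_j$ via its convolution kernel and $u_k$ via its Fourier representation produces an oscillatory integral in $y$ with phase $\Phi(y) = \chi(y)\xi - y\eta$, where $|\xi|\sim 2^k$ and $|\eta|\lesssim 2^j < 2^k$. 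Since $\chi'$ is bounded above and below, $|\partial_y\Phi| = |\chi'(y)\xi - \eta| \gtrsim 2^k$, so there is no stationary point; iterated integration by parts in $y$ produces a gain of $2^{-r'(k-j)}$, the admissible number of such integrations being essentially controlled by the $W^{\s-1,\infty}$ regularity of $\chi'$.

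The main obstacle is carrying out the $j<k$ step rigorously when $\s$ is not an integer: the informal ``$r'$ integrations by parts'' must be interpreted via a paradifferential/commutator decomposition that converts the fractional H\"older regularity of $\chi'$ into a fractional gain, or equivalently via a dyadic decomposition of $\chi'$ combined with Bernstein estimates on each piece. Once these technicalities are handled, the announced decay $2^{-r'|j-k|}$ follows for every $r' < \s$, and the summation argument of the first paragraph completes the proof.
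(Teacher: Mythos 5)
Your strategy is genuinely different from the paper's. The paper disposes of the lemma in two lines: composition by $\chi$ is bounded on $W^{\rho,\infty}$ for every $\rho\le\sigma$ (Fa\`a di Bruno), and then real interpolation $\big(B^{r_1}_{\infty,\infty},B^{r_2}_{\infty,\infty}\big)_{\theta,2}=B^{r}_{\infty,2}$ (Bergh--L\"ofstr\"om) with $r_1<r<r_2<\sigma$ non-integer gives boundedness on $B^r_{\infty,2}$. Your proposal instead attacks the Littlewood--Paley pieces directly and sums via a Schur test. That is a legitimate alternative, and your treatment of the regime $j\ge k$ (Bernstein $+$ Fa\`a di Bruno giving $\|u_k\circ\chi\|_{C^{r'}}\lesssim 2^{r'k}\|u_k\|_{L^\infty}$ for $r'<\sigma$, hence $\|\Delta_j(u_k\circ\chi)\|_{L^\infty}\lesssim 2^{-r'(j-k)}\|u_k\|_{L^\infty}$) is correct and is in fact a dyadic version of the paper's endpoint estimate.

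There is, however, a genuine problem with the regime $j<k$. Your claimed rate $2^{-r'(k-j)}$ with $r'<\sigma$ cannot be reached by the non-stationary phase argument you sketch. The phase satisfies $|\Phi'|\gtrsim 2^k$, but each integration by parts against it produces a factor $\Phi''/(\Phi')^2\sim\chi''/(\chi'^2\xi)\sim 2^{-k}$ while consuming one more derivative of $\chi$; since $\chi'\in W^{\sigma-1,\infty}$ one can perform at most $\sigma-1$ such steps, so the attainable gain is $2^{-(\sigma-1)k}$, one full unit short of what you assert. (One also has to regularize the divergent $y$-integral and cope with $\widehat{u_k}$ not being $L^1$ for a general $u\in L^\infty$, but the rate is the substantive issue.) The good news is that the estimate you are struggling to prove is not needed. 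For $k>j$ the \emph{trivial} bound $\|\Delta_j(u_k\circ\chi)\|_{L^\infty}\le C\|u_k\circ\chi\|_{L^\infty}=C\|u_k\|_{L^\infty}$ already makes the Schur kernel summable: writing $2^{jr}\|\Delta_j(u\circ\chi)\|\le\sum_k\alpha_{jk}\,2^{kr}\|u_k\|$, you get $\alpha_{jk}=2^{(j-k)(r-r')}$ for $k\le j$ (summable since $r<r'$) and $\alpha_{jk}=2^{-(k-j)r}$ for $k>j$ (summable since $r>0$), so $\sup_j\sum_k\alpha_{jk}$ and $\sup_k\sum_j\alpha_{jk}$ are both finite. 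Thus the almost-orthogonality is only needed on one side, where your Fa\`a di Bruno argument does deliver it. A minor companion issue: $S_0u\circ\chi\in L^\infty$ is not enough to place it in $B^r_{\infty,2}$; rather use that $S_0u$ is smooth with all derivatives $\lesssim\|u\|_{L^\infty}$, so $S_0u\circ\chi$ inherits the $W^{\sigma,\infty}$-regularity of $\chi$, and the same one-sided estimate applies to it. With these corrections your Littlewood--Paley proof closes, at the cost of being considerably longer than the paper's interpolation shortcut.
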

 Indeed, a simple calculation shows that for any $\rho \leq \s$, the map $u \mapsto u \circ \chi$ is continuous on $W^{\rho, \infty}$, and we conclude by choosing $r_1 <r < r_2\leq \s$ with $r_i \notin \mathbb{N}$ (notice that this implies $ W^{r_i, \infty} = B^{r_i}_{\infty, \infty}$) and using the real interpolation result (see~\cite[Theorem 6.4.5 (1)]{BeLo})
$$ [B^{r_1}_{\infty, \infty}, B^{r_2} _{\infty, \infty}]_{\theta,2} = B^{r}_{\infty,2},\quad  r=  (1- \theta) r_1+ \theta r_2 .
$$

{\em Step 3:} Separating real and imaginary parts, we obtain
$$  \|T_p \eta \|_{L^4(I, B^{s- \frac 1 8}_{\infty,2}(\xR) )} + \|T_{c_1} U \|_{L^4(I, B^{s- \frac 1 8}_{\infty,2} (\xR))} < + \infty$$ and the same proof as in Step 1  (using that $p$ is elliptic,  $s- \tdm \geq 1 \Rightarrow p^{-1} \in \Gamma^{-\mez }_{1} $ and for fixed $t$,  $c_1^{-1}(t,.)  \in W^{1, \infty}(\xR)\subset \Gamma^{0}_1$) gives
$$\| \eta \|_{L^4(I, B^{s- \frac 1 8+ \mez}_{\infty,2}(\xR) )}+\| U \|_{L^4(I, B^{s- \frac 1 8}_{\infty,2}(\xR) )} < + \infty.$$

{\em Step 4:} We have $\psi = U + T_{\mathfrak{B}} \eta$. So    using the boundedness of $\eta$ in $L^\infty(I, H^{s+ \mez}(\xR))$, of  $\mathfrak{B}$ in $L^\infty(I, H^{s-1}( \xR))\subset L^\infty(I \times \xR) $ and Sobolev injections, we obtain,
$$\| \psi \|_{L^4(I, B^{s- \frac 1 8}_{\infty,2} )} < + \infty,$$
which completes the proof of Proposition~\ref{pstrong} and consequently of Theorems~\ref{theo:main} and~\ref{theo:classic}.
 
\section{Appendix} 
In this section we give a proof of \eqref{C6} to \eqref{C10}.

 Let $a \in C_0^\infty (\xR)$  with $\supp a \subset \{ \vert \xi \vert \leq C_0 \}, b \in  C_0^\infty (\xR)$ 
 and $\varphi \in C^\infty(\xR)$ real valued such that $\sup \vert \frac{\partial \varphi}{\partial x}(x) \vert \leq C_0.$ 
 Let us set
 \begin{equation}
 I = e^{- \frac{i}{h} \varphi(x)}a(hD) \Big( b\, e^{ \frac{i}{h} \varphi}  \Big)(x).
 \end{equation}
We have
$$ I = (2 \pi h)^{-1} \iint   e^{\frac{i}{h}((x - y) \xi + \varphi(y) - \varphi(x))} a(\xi) b(y) dy d \xi. $$
Moreover we can write
$$\varphi(x) - \varphi(y) = (x - y) \rho (x,y), \quad \rho (x,y) =
\int_0^1 \frac{\partial \varphi}{\partial x}(\lambda x + (1- \lambda )y) d \lambda.$$
We have  $\vert \rho \vert \leq C_0$  so, setting $ \eta = \xi - \rho(x,y) $ 
we obtain,
$$
I = (2 \pi h)^{-1} \iint e^ {\frac {i}{h}(x - y) \eta} \kappa_0(\eta) a( \eta + \rho((x,y))b(y) \,dyd\eta
$$
where $ \kappa_0 \in C_0^\infty (\xR)$ is such that 
$\chi_0(\eta)=1 $ if $ \vert \eta \vert \leq 2C_0.$ 

Using the Taylor expansion of the function $a$ 
at the point $\theta (x,y)$ we obtain $I = I_1 +R_1$ 
where,
$$ I_1 = (2 \pi h)^{-1} \sum_{k = 0}^{M-1} \frac{1}{k!}
\iint  \eta^k e^{\frac{i}{h}(x - y) \eta} \kappa_0 (\eta)  a^{(k)}(\theta (x,y)) b(y) dy d \eta
$$
and,
$$
R_1 = c_M h^{-1}\iint \eta^M  e^{\frac{i(x-y)\eta}{h}}
\kappa_0 (\eta) \int _0^1 (1 -\lambda)^{M-1} a^{(M)}(\lambda \eta + \rho( (x,y))b(y) d \lambda dy d \eta.
$$
Now we have $\eta^k e^{\frac{i}{h}(x - y) \eta}= (-\frac{h}{i} \partial_y)^ke^{\frac{i}{h}(x - y)\eta},$ so integrating 
by parts in the integrals $I_1$ and $R_1$ we obtain,
\begin{align*}
   I_1 &= (2 \pi h)^{-1} \sum_{k = 0}^{M-1} \frac{h^k}{i^k k!} \iint    e^{\frac{i}{h}(x - y) \eta} \kappa_0 (\eta)  \partial_y^k [ a^{(k)}(\theta (x,y)) b(y)] dy d \eta\\
   I_1&=(2 \pi h)^{-1} \sum_{k = 0}^{M-1} \frac{h^k}{i^k k!}  \int \hat{\kappa}_0 (\frac{x-y}{h})  \partial_y^k [ a^{(k)}(\theta (x,y)) b(y)] dy,
    \end{align*}
 $$  R_1= c' h^{M-1}\iiint_0^1  e^{\frac{i(x - y) \eta}{h}} \kappa_0 (\eta) (1 -\lambda)^{M-1} \partial_y^M[a^{(M)}(\lambda \eta + \rho( (x,y))b(y)] d \lambda dy d \eta.$$

Let us set
$$
f(x,y) = \partial_y^k [ a^{(k)}(\theta (x,y)) b(y)].
$$
Now we set  in the integral, $x-y =hz$ and we write,
\begin{multline*}
f(x,x-hz) = \sum_{j=0}^{M-1} \frac{(-hz)^j}{j!}(\partial_y^j f)(x,x) \\+ \frac{(-hz)^M}{(M-1)!} 
\int_0^1 (1- \lambda)^{M-1}(\partial_y^Mf) (x,x-\lambda hz) d \lambda.
\end{multline*}
Then we use the following equality, which reflects the fact that $\kappa_0$ is equal to one near the origin. For $j \in \xN$ we have,
$$
\int z^j \hat{\kappa}_0(z) dz = 2 \pi \delta_{j,0},
$$
where $\delta_{j,0}$ is the Kronecker symbol. 
It follows that,
$$
I_1 =  \sum_{k = 0}^{M-1} \frac{h^k}{i^k k!}  \partial_y^k [ a^{(k)}(\theta (x,y)) b(y)] \big\arrowvert_{y=x}+ R_2,
$$
where $R_2= \sum_{k = 0}^{M-1}c_{k}h^{k+M} r_k$ with
$$
r_k=\iint_0^1 z^M \hat {\kappa}_0(z) (1-\lambda)^{M-1}\partial_y^{M+k}[ a^{(M)}(\theta (x,y)) b(y)] \big\arrowvert_{y=x-\lambda hz} dÊ\lambda dz.
$$
Thus we obtain \eqref{C6} to \eqref{C10}.


\end{document}